\let\textit\textsl
\let\emph\textsl
\let\itshape\slshape
\let\em\slshape
\newcommand\indexor{index set\xspace}
\newcommand\indexors{index sets\xspace}
\newcommand\Indexors{Index sets\xspace}
\mathchardef\ccts="017F
\newcommand\cct{\mathbin{\mkern-4mu{}\ccts{}}}
\newcommand\s[2]{\suc(#1)_{#2}}
\newcommand \sucr[1]{\s{#1_i}{i\in\In_{#1}}}
\newcommand \Ord {\mathbf{Ord}}
\newcommand \ord {\mathbf{ord}}
\newcommand \LEM {\textbf{LEM}}
\newcommand \LPO {\textbf{LPO}}
\newcommand \LLPO {\textbf{LLPO}}
\newcommand \Fam {\mathrm{Fam}}
\newcommand \Tree {\mathrm{Tree}}
\newcommand \In {\mathrm{In}}
\newcommand \axsuccunaire{{\tt ax8}}
\newcommand \axsuccunair{{\tt ax9}}
\newcommand \axquattre{{\tt ax4}}
\newcommand \axsupsucfini{{\tt ax10}}
\newcommand \axdouze{{\tt ax11}}
\newcommand \axtreize{{\tt ax12}}
\newcommand \ctn{{\tt contraction}}
\newcommand \transu{{\tt trans1}}
\newcommand \transd{{\tt trans2}}
\newcommand \transt{{\tt trans3}}
\newcommand \rfl {{\tt rfl}}
\newcommand \irfl {{\tt irfl}}
\newcommand \sucdef{{\tt sdef}}
\newcommand \succz{{\tt s0}}
\newcommand \succu{{\tt s1}}
\newcommand \supdef{{\tt supdef}}
\newcommand \supz{{\tt sup0}}
\newcommand \wkn{{\tt weakening}}
\newcommand \ept[1]{\(#1\)-order}
\newcommand \epts[1]{\(#1\)-orders}
\newcommand{\sibrouillon}[1]{}
\newcommand\hum[1]{{\sibrouillon{\sf #1}}}
\newcommand\gui[1]{``#1''}
\newcommand \CAdre[2]{%
\begin{center}
\begin{tabular}%
{|p{#1\textwidth}|}
\hline
\vspace{-1.5mm}
#2 
\vspace{1mm}\\ %
\hline
\end{tabular}
\end{center} 
}
\newcommand\ndsp{\textstyle}
\newcommand\sta{^\star}
\let\sta\etl
\newcommand\eqdefi{\buildrel{\rm def}\over {\;=\;}}
\newcommand\formule[1]{{\left\{ {\arraycolsep2pt\begin{array}{lll} #1 \end{array}}\right.}}
\newcommand \so[1] {\left\{{#1}\right\}} 
\newcommand \sotq[2] {\so{\,#1\,|\,#2\,}} 
\newcommand \und[1] {\underline{#1}}
\newcommand \lora {\longrightarrow}
\newcommand \mt {\mapsto}
\renewcommand \leq{\leqslant}
\renewcommand \preceq{\preccurlyeq}
\renewcommand \geq{\geqslant}
\newcommand \som {\sum\nolimits}
\newcommand\lrb[1] {\llbracket #1 \rrbracket}
\newcommand\lrbn {\lrb{1..n}}
\newcommand\lrbm {\lrb{1..m}}
\newcommand\lrbr {\lrb{1..r}}
\newcommand \dar[1] {\MA{\downarrow \!#1}}
\DeclareRobustCommand{\eoe}{\leavevmode\unskip\penalty9999\hbox{}\nobreak\hfill\quad\(\diamond\)}
\newcommand \NN{\mathbb {N}} 
\newcommand\MA[1]{\mathop{#1}\nolimits}
\newcommand\fF{\mathfrak{F}}
\newcommand \Ack {\MA{\mathrm{Ack}}}
\newcommand \Acko {\underline{\Ack}}
\newcommand \Lst {\MA{\mathrm{Lst}}}
\newcommand \suc {\MA{\mathrm{\vphantom ts}}}
\def\th@plain{\slshape}\makeatother
\makeatletter\patchcmd{\th@remark}{\itshape}{\slshape}{}{}\makeatother
\newcounter{bidon}
\newcommand{\rdb}{\refstepcounter{bidon}}
\newcommand{\creflastconjunction}{, and }
\crefname{subsection}{Section}{Sections}
\crefname{propdef}{Proposition and Definition}{}
\crefname{property}{Property}{Properties}
\crefname{inequality}{Inequality}{Inequalities}
\crefname{fact}{Fact}{Facts}
\crefname{axiom}{Axiom}{Axioms}
\crefname{equation}{}{}
\crefname{fproperty}{propriété}{propriétés}
\crefname{fequation}{}{}
\crefname{ffact}{fait}{faits}
\crefname{faxiom}{axiome}{axiomes}
\crefname{ftheorem}{théorème}{théorèmes}
\crefname{flemma}{lemme}{lemmes}
\crefname{fdefinition}{définition}{définitions}
\crefname{fproposition}{proposition}{propositions}
\crefname{fcorollary}{corolaire}{corolaires}
\crefname{fpropdef}{proposition et définition}{propositions et définitions}
\crefname{fconventions}{conventions}{conventions}
\crefname{fremark}{remarque}{remarques}
\crefname{fcomments}{commentaires}{commentaires}
\crefname{fexample}{exemple}{exemples}
\renewcommand\cpageref[1]{page~\pageref{#1}}
\begin{document}
\selectlanguage{english}

\thispagestyle{empty}
~ 
\vspace{3cm}

\noindent In this file you find the English version, starting on the page numbered \pageref{beginenglish}:

\medskip \noindent  {\Large \bf Constructive theory of ordinals}

\medskip This paper has appeared in the book \emph{Mathematics for Computation – M4C} edited by Marco Benini, Olaf Beyersdorff, Michael Rathjen and Peter Michael Schuster (2023). Singapore: World Scientific.

\bigskip \noindent  
Then the French version begins on the page numbered \pageref{beginfrench}:

\medskip\noindent   {\Large \bf Une théorie constructive des ordinaux}

\smallskip \noindent Le lecteur ou la lectrice sera sans doute surprise de l'alternance des sexes ainsi que de l'orthographe du mot \raise.3ex\hbox{$\scriptscriptstyle\langle\!\langle\,$}corolaire\raise.3ex\hbox{$\scriptscriptstyle\,\rangle\!\rangle$}, avec d'autres innovations auxquelles elle n'est pas habituée. En fait, nous avons essayé de suivre au plus près les préconisations de l'orthographe nouvelle recommandée, telle qu'elle est enseignée aujourd'hui dans les écoles en France.  

\bigskip\noindent   {\large \bf Authors}  

\smallskip \noindent Thierry Coquand, Computer Science and Engineering Department, University of Gothenburg, Sweden\\
email: {\tt thierry.coquand@cse.gu.se}

\smallskip \noindent Henri Lombardi, Université de Franche-Comté, CNRS, UMR 6623, LmB, 25000 Besançon, France\\
email: {\tt henri.lombardi@univ-fcomte.fr}

\smallskip \noindent Stefan Neuwirth, Université de Franche-Comté, CNRS, UMR 6623, LmB, 25000 Besançon, France\\
email: {\tt stefan.neuwirth@univ-fcomte.fr}

\thispagestyle{empty}

\pagestyle{headings}
\patchcmd{\sectionmark}{\MakeUppercase}{}{}{}
\setcounter{page}{0}
\renewcommand\thepage{E\arabic{page}}



\begingroup

\theoremstyle{plain}
\newtheorem{theorem}{Theorem}[section]
\newtheorem{lemma}[theorem]{Lemma}
\newtheorem{corollary}[theorem]{Corollary}
\newtheorem{proposition}[theorem]{Proposition}
\newtheorem{propdef}[theorem]{Proposition and definition}
\newtheorem{fact}[theorem]{Fact}
\newtheorem*{propertiesofindexors}{Properties of the set \(\fF\) of \indexors}
\newtheorem*{axiomsfororders}{Axioms for \epts{\fF}}

\theoremstyle{definition}
\newtheorem{definition}[theorem]{Definition}
\newtheorem*{conventions}{Conventions}

\theoremstyle{remark}
\newtheorem{remark}[theorem]{Remark}
\newtheorem*{comments}{Comments}
\newtheorem{example}[theorem]{Example}

\newcommand\Affilfont{\footnotesize}

\title{Constructive theory of ordinals}

\def\proofname{\textsl{Proof}}

\author{Thierry Coquand, Henri Lombardi, Stefan Neuwirth}
\maketitle

\startcontents[english]

\rdb
\label{beginenglish}

\begin{abstract}
\citet{PML} describes recursively constructed ordinals. 
He gives a constructively acceptable version of Kleene's computable ordinals.
In fact, the Turing definition of computable functions is not needed from a constructive point of view. 
We give in this paper a constructive theory of ordinals that is similar to
Martin-Löf's theory, but based only on the two relations \gui{\(x\leq y\)} and  \gui{\(x< y\)}, i.e.\ without considering sequents whose intuitive meaning is a classical disjunction.  In our setting, the operation \gui{supremum of ordinals}   plays an important rôle
through its interactions with the relations \gui{\(x\leq y\)} and  \gui{\(x< y\)}. This allows us to approach as much as we may the notion of linear order when the  property \gui{\(\alpha\leq \beta\) or \(\beta\leq \alpha\)} is provable only within classical logic. 
Our aim is to give a formal definition corresponding to intuition and to prove that our constructive ordinals satisfy constructively all desirable properties. 
\end{abstract}

\medskip \noindent {\bf Keywords:} ordinal number; constructive mathematics.

\smallskip \noindent {\bf MSC2020:} 03E10 03F65.

\setcounter{tocdepth}{4}
\markboth{Contents}{Contents}
\small

\printcontents[english]{}{1}{}
\normalsize

\newpage

\section{Introduction}

This paper is written in the framework of informal constructive mathematics.
We use Bishop's constructive set theory enriched with generalised inductive definitions
(Bishop used this kind of constructions for measure theory, Borel sets, and Lebesgue integration).

In classical mathematics, a natural definition for an ordinal is to be an order type of a well-ordered set (see e.g.\ \citealp[III.2.Ex.14]{bourbaki68}).
Nevertheless it is more convenient to use von Neumann ordinals, for which many results can be proved without using choice (see e.g.\ 
\citealp[Chapitre~2]{Kri} and \citealp[Chapitre~II]{dehornoy17}).

Let us now propose a constructive approach.
A binary relation \(<\) on a set \(X\) is said to be \emph{well-founded} if for any family of sets \((E_x)_{x\in X}\) indexed by \(X\) it is possible to construct elements of \(\prod_{x\in X}E_x\) by \(<\)-induction. Precisely, each time a construction \(\gamma\) is given which from an element \(a\in X\)
and an element \(\varphi\in\prod_{x\in X,x<a}E_x\) constructs an element \(\gamma(a,\varphi)\in E_a\), there exists a unique \(\Phi\in \prod_{x\in X}E_x\) such that for all \(a\in X\) we have \(\Phi(a)=\gamma(a,\Phi|_{x\in X,x<a})\). 
This notion has a clear constructive meaning.

In particular, let us consider a property for elements in \(X\). If the property is \(<\)-hereditary, i.e.\ if it is true for \(a\in X\) as soon as it is true for all \(x\in X\) with \(x<a\), then this property is true for all elements in~\(X\).

In constructive mathematics, \citet*[Section~I.6]{MRR} spell out
well-foundedness in a different but equivalent way and define an ordinal as a linearly ordered set for which the order relation is well-founded. So all
subsets of \(\NN\) are ordinals even if we don't know whether they have a
smallest element.

The \citet[Section 10.3]{hottbook} considers ``Grayson ordinals'' (see
\citealp*[Exercise~I.6.12]{MRR}) in the framework of univalent homotopy type
theory; the ordinals of a given universe turn out to form a set (and not a
groupoid). This theory of ordinals differs from ours with respect to
\cref{axsuccunaire,axsuccunaire2} for \epts{\fF} in the
following.

Among other constructive points of view there are descriptions of countable ordinals constructed by induction
in the works \citealt{brouwer26}, \citealt{gentzen36}, \citealt{church38}, \citealt{kleene38}, \citealt{heyting61}, and \citealt[Chapter 3]{PML}. 

A constructive treatment of von Neumann ordinals based on transfinite recursion is given by \citet[Section 9.4]{aczelrathjen10}.

Brouwer proposes an inductive construction based on the idea that when ordinals  \(\alpha_n\) are defined for all \(n\in\NN\) and are linearly ordered well-founded sets, then we can describe the ordinal \(\alpha\) corresponding intuitively to \(\alpha_1\) followed by \(\alpha_2\) followed by \(\alpha_3\) followed by~\dots. The ordered set \(\alpha\) defined by Brouwer will again be a linearly ordered well-founded set. And if the order relation on each \(\alpha_i\) is decidable, the same is true for \(\alpha\).

Two Brouwer ordinals are in general not comparable (within intuitionistic logic): there is no general criterion allowing us to decide whether two ordinals have the same order type, and, when this is not the case, which is isomorphic to an initial segment of the other.

The paper \citealt*{krausnordvallxu21} compares three distinct constructive approaches to constructive ordinals, denoted by \textsf{Cnf}, \textsf{Brw} and \textsf{Ord}, which are available  in the framework of univalent homotopy type theory. The approach \textsf{Brw} is directly inspired by Brouwer ordinals.

Martin-Löf describes recursively constructed ordinals. 
He gives a constructively acceptable version of Kleene's computable ordinals.
Intuitively, an ordinal à la Martin-Löf is inductively defined 
using the following two basic constructions:
\begin{itemize}
\item there is  a minimum ordinal \(\und 0\);
\item if \((\alpha_n)\) is an explicit sequence of ordinals
(indexed by \(\NN\) or by an \(\NN_k=\sotq{n\in\NN}{n<k}\)), the supremum of the successors of the \(\alpha_n\)'s is an ordinal.\footnote{Martin-Löf denotes this supremum 
by \(\sup(\alpha_n)\). In his setting, \(\und 0\) is in fact the supremum of the empty sequence. Except for this case, his \(\sup(\alpha_n)\) is the supremum of the successors of the \(\alpha_n\)'s; we shall prefer the notation \(\s{\alpha_n}{}\).}  
\end{itemize}
To say that the definition is inductive is to say that every ordinal is constructed using the indicated rules.

In a constructive framework, we can drop Turing machines and replace Turing computability by intuitive (undefined) computability. 
In this case, the main difference between Brouwer and Martin-Löf ordinals
is that Martin-Löf ordinals, being defined in a \gui{parallel} way rather
than in a \gui{sequential} way, are more general: it is possible for any sequence of well-defined ordinals \((\alpha_n)\) to construct the supremum of the successors of the ordinals \(\alpha_n\). A drawback is that there is no way to associate to a Martin-Löf ordinal a linearly ordered well-founded set with the same order type.
For example, if the \(\alpha_n\) are all equal to \(\und 0\) or \(\und 1\), it is a priori impossible to decide whether the supremum of the successors of the \(\alpha_n\)'s equals \(\und 1\) or \(\und 2\).  

\subsection*{Ordinals as trees}

Martin-Löf proposes to visualise an ordinal~\(\alpha\) as a well-founded tree with finite or countable branchings. The ordinal~\(\alpha\) is given with an index set denoted by~\(\In_\alpha\); in the sequel, it will be an element of the set \(\fF_2\) of index sets consisting of~\(\NN\) and its finite subsets~\(\NN_k\).
\begin{itemize}
\item The tree with only its root represents \(\und 0\).
\item    If \((t_i)_{i\in \In_{\alpha}}\) is a family of ordinal trees 
for a family of ordinals \((\alpha_i)_{i\in \In_{\alpha}}\), the supremum \(\alpha=\s{\alpha_i}{i\in \In_{\alpha}}\)  of the successors of the \(\alpha_i\)'s is given by the ordinal tree for which there are \(\#\In_\alpha\) branches above the root and a copy of \(t_i\) is attached to the branch indexed by \(i\in\In_\alpha\).
\end{itemize}

Consider the trees in Figure~\ref{efigure}.

If \(n\in\NN\), the ordinal \(\und n\) can be represented by the tree with \(n\)~successive unary branchings at \(n\)~nodes, so that it has \(n+1\)~nodes. 

The first infinite ordinal \(\omega\) can be represented by the tree that has a countable branching above the root, the branches bearing the preceding trees (representing \(\und n\), \(n\in\NN\)).

Its successor, denoted by \(\omega+\und1\), can be represented by the tree with unary branching above the root, the branch bearing the preceding tree.

The ordinal \(\omega+\und2\) can be represented by the tree with unary branching above the root, the branch bearing the preceding tree.

The ordinal \(\omega+\omega\) can be represented by the tree that has a countable branching above the root, the branches bearing the trees representing \(\omega+\und n\), \(n\in\NN\).
\begin{figure}[!h]
  \centering
  \includegraphics[width=\textwidth,trim=4 4 6 5
  ]{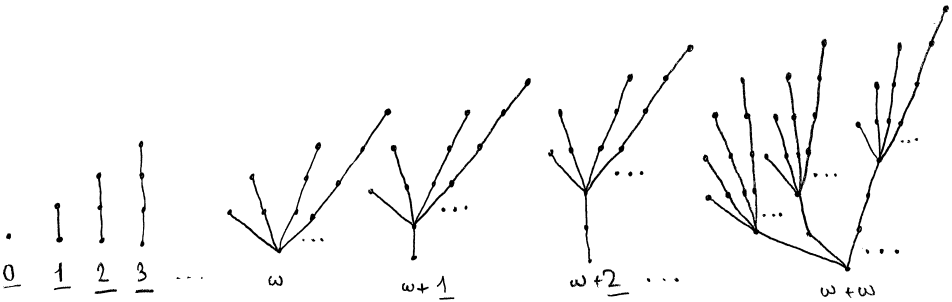}
  \caption{Ordinal trees.}
  \label{efigure}
\end{figure}

More formally, such a tree can be defined as the set of its nodes, or branching points, suitably named. We may consider the set \(\Lst(\NN)\) of finite lists  of elements of \(\NN\). 
Let \(n\in\NN\) and \(\ell\), \(\ell'\in\Lst(\NN)\). We denote by \(n\cct \ell\) the list \([n,\ell_1,\dots,\ell_k]\), where \(\ell=[\ell_1,\dots,\ell_k]\), by \(\ell\cct n\) the list \([\ell_1,\dots,\ell_k,n]\), and by \(\ell\cct \ell'\) the concatenation of the lists \(\ell\) and \(\ell'\).

We remark that \(\Lst(\NN)\) can be enumerated in a natural way\footnote{For example, for \(\ell=[\ell_1,\dots,\ell_k]\in\Lst(\NN)\), we let \(\mu(\ell)=\sum_{i=1}^{k}(\ell_i+1)\) and we enumerate the lists by increasing \(\mu(\ell)\).} and that the notion of an \(\NN\)-indexed family in  \(\Lst(\NN)\) corresponds, via such an enumeration, to the basic (undefined) notion of map from~\(\NN\) to~\(\NN\).

A well-founded tree with finite or countable branchings can then be described as a detachable subset~\(T\) of \(\Lst(\NN)\) which is inductively constructed according to the previously indicated process. 
\(T\) is closed by initial segments: if \(\ell\in\Lst(\NN)\), \(p\in\NN\), and \(\ell\cct p\in T\), then  \(\ell\in T\). 
Thus, to each ordinal \(\alpha\), we are associating a tree, defined as a suitable subset of \(\Lst(\NN)\), denoted by \(\Tree(\alpha)\).

If \(n\in\NN\), the ordinal \(\und n\) can be described by the finite sequence of \(n+1\) lists \([\,]\), \([0]\), \([0,0]\), \dots, \([0,\dots,0]\). 

The first infinite ordinal \(\omega\) can be described by the subset of \(\Lst(\NN)\) enumerated by the infinite sequence \([\,]\), \([0]\), \([1]\), \([1,0]\), \([2]\), \([2,0]\), \([2,0,0]\), \([3]\), \([3,0]\), \([3,0,0]\), \([3,0,0,0]\), etc.

The ordinal \(\omega+\und1\) can be described by the infinite sequence \([\,]\), \([0]\), \([0,0]\), \([0,1]\), \([0,1,0]\), \([0,2]\), \([0,2,0]\), \([0,2,0,0]\), \([0,3]\), \([0,3,0]\), \([0,3,0,0]\), \([0,3,0,0,0]\), etc.

The ordinal \(\omega+\und2\) can be described by the infinite sequence \([\,]\), \([0]\), \([0,0]\), \([0,0,0]\), \([0,0,1]\), \([0,0,1,0]\), \([0,0,2]\), \([0,0,2,0]\), \([0,0,2,0,0]\), \([0,0,3]\), \([0,0,3,0]\), \([0,0,3,0,0]\), \([0,0,3,0,0,0]\), etc.

\begin{sloppypar}
The ordinal \(\omega+\omega\) can be described by the doubly infinite sequence  \([\,]\), \([0]\), \([0,0]\), \([0,1]\), \([0,1,0]\), \([0,2]\), \([0,2,0]\), \([0,2,0,0]\), \([0,3]\), \([0,3,0]\), \([0,3,0,0]\), \([0,3,0,0,0]\), etc., \([1]\), \([1,0]\), \([1,0,0]\), \([1,0,1]\), \([1,0,1,0]\), \([1,0,2]\), \([1,0,2,0]\), \([1,0,2,0,0]\), \([1,0,3]\), \([1,0,3,0]\), \([1,0,3,0,0]\), \([1,0,3,0,0,0]\),  etc., \([2]\), \([2,0]\), \([2,0,0]\), \([2,0,0,0]\), \([2,0,0,1]\), \([2,0,0,1,0]\), \([2,0,0,2]\), \([2,0,0,2,0]\), \([2,0,0,2,0,0]\), \([2,0,0,3]\), \([2,0,0,3,0]\), \([2,0,0,3,0,0]\), \([2,0,0,3,0,0,0]\), etc., etc.
\end{sloppypar}

These trees, seen as subsets of~$\Lst(\NN)$ defined by induction, form a well-defined set in the context of intuitive constructive mathematics. It can be denoted by~$\ord_2$ (see \cref{deford}). 
Note that from a constructive point of view, $\ord_2$ is a discrete set if, and only if, the Markov principle is valid.
This set $\ord_2$ is a \gui{set of ordinal names} in \citealt{PML}. And the set of Martin-Löf ordinals, \(\Ord_2^\mathrm{ML}\), is a quotient of $\ord_2$ by a correctly proved equivalence relation. The set \(\Ord_2^\mathrm{ML}\) and our set \(\Ord_2\) are discrete if the little principle of omniscience \LPO\ is valid.  See \cref{subsecOML} for more details.

\smallskip\centerline{*\ *\ *}\smallskip


We give in this paper a constructive theory of ordinals that is similar to Martin-Löf's theory, but based only on the two relations \gui{\(x\leq y\)} and  \gui{\(x< y\)}, i.e.\ without considering sequents whose intuitive meaning is a classical disjunction.

In our setting, the operation \gui{supremum of ordinals}   plays an important rôle through its interactions with the relations \gui{\(x\leq y\)} and  \gui{\(x< y\)}. This allows us to approach as much as we may the notion of linear order when the  property \gui{\(\alpha\leq \beta\) or \(\beta\leq \alpha\)} is provable only within classical logic.
In the same way, the impossibility of constructively proving  linear order for real numbers is circumvented by the introduction of \(x< y\), \(x\leq y\) and \(\sup(x,y)\), which are all three indispensable. 

Our problem is to give a formal definition corresponding to intuition and to prove that our constructive ordinals satisfy constructively all desirable properties. 

\smallskip\centerline{*\ *\ *}\smallskip

The first step in \cref{secEpto} is to describe these desirable properties.  

\hum{À COMPLÉTER}

\section{Linear orders associated to a set of \indexors}\label{secEpto}
We define in this section the structure of linear orders associated to a set~\(\fF\) of \indexors, \(\fF\)-orders for short.

\subsection{\Indexors} 
First we need a set \(\fF\) of \indexors. An \indexor will be denoted by \(I\), \(J\), \(K\),  \(I'\), \(I''\), \(J'\),  \(I_a\), \(I_b\), etc. 

An \emph{\indexor} is simply a set that will be used as a set of indices
for the families we shall consider. In the sequel, a finitely enumerated subset of \(A\) is always a subset of \(A\) defined à la Bishop by a map \(\NN_k\to A\). If \(A\) is discrete, a finitely enumerated subset of \(A\)
is a detachable subset.

\begin{propertiesofindexors}
We will assume that 
\begin{itemize} 
\item \(\NN\) and the finite sets \(\NN_k=\sotq{n\in\NN}{n<k}\) (\(k\geq 0\))   are elements of \(\fF\);  
\item 
any finitely enumerated subset\footnote{By definition this is a subobject  given by a function \(\NN_k\to \fF\).} of an element of \(\fF\) is isomorphic\footnote{In the category of sets.} to an element of \(\fF\);
\item if \(J\in\fF\), the set of finitely enumerated subsets of \(J\) is isomorphic to an element of \(\fF\);  
\item \(\fF\) is closed by disjoint unions indexed by \(\fF\): we will denote by
 \(I+J\) a disjoint union of \(I\) and \(J\), and by \(\sum_{i\in I} J_i\) a disjoint union of the family \((J_i)_{i\in I}\). 
\end{itemize}
\end{propertiesofindexors}

Disjoint unions are to be understood as direct sums in the category of sets.
The disjoint union \(J=\sum_{i\in I} J_i\) comes with a family 
 \(\iota_\ell\colon J_\ell \to J\) of injective maps realising
 \(J\) as the direct sum of the \(J_i\)'s in the category of sets.

If we restrict ourselves to countable ordinals, we can take for \(\fF\)
 the set  
\[\fbox{\(\fF_2=\sotq{\NN_k}{k\in\NN, k\geq0} \cup \so\NN\)}\]
with convenient operations  for the set of finite subsets of an \(I\in \fF\) and for disjoint unions of elements of~\(\fF\) indexed by an element of~\(\fF\).
Any other set~\(\fF\) of \indexors will contain \(\fF_2\).    

An \emph{\(\fF\)-indexed family  of elements of \(E\)} is a family \((x_i)_{i\in I}\), where \(I\in\fF\) and the \(x_i\)'s \(\in E\). The set of \(\fF\)-indexed families  of elements of \(E\) is denoted by \(\Fam(\fF,E)\). 

We shall restrict the use of subscripts for ordinal variables to this meaning, and use superscripts for all other uses.

\subsection{Axioms} 
A  structure of \emph{\ept{\fF}} on a set~\((E,=)\) is given as \((E,<,\leq,0_E,\sup,\suc)\), where 
\begin{itemize}
\item \(<\) and \(\leq\) are binary relations defined on \((E,=)\);
\item  \(0_E\) is an element of \(E\) and we let \(E\sta=\sotq{\alpha\in E}{0_E<\alpha}\);
\item  \(\sup\) is a map from \(\Fam(\fF,E\sta)\) to \(E\sta\): taking as input  an element \((\alpha_i)_{i\in I}\) of \(\Fam(\fF,E\sta)\), it constructs an element of \(E\sta\) denoted by \(\alpha=\sup(\alpha_i)_{i\in I}\);
\item   \(\suc\) is a unary map from  \(E\) to \(E\sta\):  taking as input  an element \(\beta\in E\), it constructs an element of \(E\sta\) denoted by \(\suc(\beta)\).
\end{itemize}

\begin{definition} \label{defibinarysup}
In order to write axioms with finite \(\sup\)'s,  we define \(\sup(\alpha,\beta)\) for \(\alpha,\beta\in E\) in the following way (using implicitly \cref{ax15}):   \(\sup(0_E,\alpha)=\alpha=\sup(\alpha,0_E)\); if \(\alpha,\beta\in E\sta\), 
\(\sup(\alpha,\beta)\) is already defined.
\end{definition}

These data are to satisfy the following axioms. 

%
\begin{axiomsfororders}
\begin{enumerate}[label=\textit{\arabic*.},ref=\textit{\arabic*}]
\item[]
\item \label[axiom]{refantisym} \(\alpha=\beta\) if and only if \(\alpha\leq \beta\) and \(\beta\leq \alpha\) \ (reflexivity and antisymmetry);
\item \label[axiom]{zero}\(0_E\leq \alpha\);
\item \label[axiom]{ax3} if \(\alpha<\alpha\) then \(0_E=\beta\) \ (irreflexivity);  
\item \label[axiom]{axltle} if \(\alpha<\beta\) then \(\alpha\leq \beta\);  
\item\label[axiom]{transun} if \(\alpha\leq \beta\) and \(\beta\leq \gamma\), then \(\alpha\leq \gamma\) \ (transitivity 1); 
\item\label[axiom]{transdeux} if \(\alpha<\beta\) and \(\beta\leq \gamma\), then \(\alpha<\gamma\) \ (transitivity 2);
\item\label[axiom]{transtrois} if \(\alpha\leq \beta\) and \(\beta< \gamma\), then \(\alpha< \gamma\) \ (transitivity 3);
\item \label[axiom]{axsuccunaire} \(\alpha<\suc(\beta)\) if and only if 
\(\alpha\leq \beta\) \ (using \cref{refantisym} this gives \(\alpha<\suc(\alpha)\));
\item \label[axiom]{axsuccunaire2} \(\suc(\beta)\leq \alpha\) if and only if \(\beta<\alpha\);
\item \label[axiom]{supsucfini} if \(\alpha<\gamma\) and \(\beta<\gamma\), then \(\sup(\alpha,\beta)<\gamma\);
%
\item \label[axiom]{ax10} if \(\alpha<\sup(\alpha,\beta)\) then \(\alpha<\beta\);
\item  \label[axiom]{ax11}  if \(\gamma<\alpha\) and \(\alpha\leq\sup(\beta,\gamma)\), then \(\alpha\leq \beta\);
\item \label[axiom]{axsup} for  \((\alpha_i)_{i\in I}\in \Fam(\fF,E\sta)\) and   \(\beta\in E\),  we have
\[
  \alpha_i\leq \beta\text{ for all }i\in I\ \text{ if and only if }\ \sup(\alpha_i)_{i\in I}\leq \beta
\]
(characteristic property of \(\sup\));
\item \label[axiom]{ax14} if \(\gamma<\beta\) for all \(\gamma<\alpha\), then \(\alpha\leq \beta\);
\item  \label[axiom]{ax15} either \(\alpha\leq 0_E\) or \(0_E<\alpha\).
\end{enumerate}
\end{axiomsfororders}

The \emph{category of \epts{\fF}}  is defined by its morphisms 
\[
  \ndsp
(E,<_E,\leq_E,0_E,\sup_E,\suc_E) \lora (F,<_F,\leq_F,0_F,\sup_F,\suc_F)\text,
\]  
which are maps from \(E\) to \(F\) preserving the structure (in the usual meaning).

\begin{comments}
  \begin{enumerate}[label=\arabic*)]
  \item Let \(\gamma\in E\sta\) and \((\alpha_n)_{n\in \NN}\) such that \(\alpha_n=\gamma\) or \(\alpha_n=\suc(\gamma)\) for each~\(n\). The element \(\sup(\alpha_n)_{n\in \NN}\) hesitates between \(\gamma\) and \(\suc(\gamma)\). Thus there is no hope that
the disjunction \gui{\(\alpha\leq \beta\) \hbox{or \(\beta< \alpha\)}}
be constructive for arbitrary elements \(\alpha,\beta\ne0_E\). Consequently, we have introduced the \(\sup\) map together with its axioms in order to best describe in what sense the order can be thought of as linear.
Perhaps this is not optimal (reasonable axioms, satisfied for the set~\(\Ord_2\) of ordinals of the second class constructed in \cref{ConstOrd},  might be missing).

\item The irreflexivity is given a form that, instead of stating a negation, allows \(E\) to reduce to a singleton. This happens if and only if \(0_E=\suc(0_E)\), which implies \(0_E<0_E\)  using \cref{axsuccunaire}. 

\item \cref{ax15} expresses that \(\so{0_E}\) is detachable. 
This contrasts with the fact that  
 elements other than~\(0_E\) do not define detachable singletons. We have defined \(\sup\) on \(E\sta\) rather than on \(E\) in order to satisfy constructively the disjunction of \cref{ax15}. 

\item The characteristic property of \(\sup\) shows that this law  satisfies idempotence as well as generalised associativity and commutativity.
\eoe
\end{enumerate}
\end{comments}

\subsection{Some properties}

\begin{propdef}[generalising \cref{defibinarysup}] \label{propdefsupfini}\leavevmode\\*  
For \(\alpha^1,\dots,\alpha^r\in E\) we let
\[
  \sup(\alpha^1,\dots,\alpha^r)\eqdefi
\formule{0_E \hbox{ if } \alpha^1=\dots=\alpha^r=0_E\\[.5em]
\hbox{the \(\sup\) of the \(\alpha^k\ne0_E\) otherwise.}}
\] 
The characteristic property of \(\sup\) is satisfied:
\[
  \alpha^1\leq \beta\hbox{ and }\dots\hbox{ and }\alpha^r\leq \beta\ \text{ if and only if }\ \sup(\alpha^1,\dots,\alpha^r)\leq \beta\text.
\] 
\end{propdef}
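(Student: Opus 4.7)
The plan is to reduce the general finite $\sup$ to the $\fF$-indexed $\sup$ already given by the structure, using \cref{ax15} to detach which of the $\alpha^k$ are $0_E$. The only real work is to show that the definition is well-formed; the characteristic property then drops out of \cref{axsup} and \cref{zero}.

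First I would check that \(\sup(\alpha^1,\dots,\alpha^r)\) is well-defined. For each \(k\in\{1,\dots,r\}\), \cref{ax15} provides the disjunction \(\alpha^k\leq 0_E\) or \(0_E<\alpha^k\); in the first case, \cref{zero} and \cref{refantisym} give \(\alpha^k=0_E\). Hence the subset \(S=\{k:0_E<\alpha^k\}\) of \(\{1,\dots,r\}\) is detachable, and since \(\{1,\dots,r\}\) is (in bijection with) \(\NN_r\in\fF\), \(S\) is a finitely enumerated subset of an element of~\(\fF\), so it is isomorphic to an element of~\(\fF\) by the second property of \indexors. If \(S\) is empty (i.e.\ all \(\alpha^k=0_E\)), we set \(\sup(\alpha^1,\dots,\alpha^r)=0_E\); otherwise the family \((\alpha^k)_{k\in S}\) lies in \(\Fam(\fF,E\sta)\) and we set \(\sup(\alpha^1,\dots,\alpha^r)=\sup(\alpha^k)_{k\in S}\). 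The two cases agree with \cref{defibinarysup} when \(r=2\).

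Next I would establish the characteristic property by splitting on the same case. If all \(\alpha^k=0_E\), then \(\sup(\alpha^1,\dots,\alpha^r)=0_E\leq\beta\) by \cref{zero}, and conversely each \(\alpha^k=0_E\leq\beta\) by \cref{zero}, so both directions hold trivially. Otherwise, let \(S\) be as above. For the forward direction, if \(\alpha^k\leq\beta\) for all \(k\in\{1,\dots,r\}\), then in particular \(\alpha^k\leq\beta\) for all \(k\in S\), so by \cref{axsup} we get \(\sup(\alpha^k)_{k\in S}\leq\beta\), i.e.\ \(\sup(\alpha^1,\dots,\alpha^r)\leq\beta\). For the converse, if \(\sup(\alpha^k)_{k\in S}\leq\beta\), then \cref{axsup} yields \(\alpha^k\leq\beta\) for each \(k\in S\), while for \(k\notin S\) we have \(\alpha^k=0_E\leq\beta\) by \cref{zero}.

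The only delicate point is the well-formedness step, because it relies crucially on \cref{ax15} to keep the construction constructive (otherwise deciding whether each \(\alpha^k\) equals \(0_E\) would require \LEM). Once the detachable set \(S\) is produced and recognised as (isomorphic to) an element of \(\fF\), the rest is a routine application of \cref{axsup} and \cref{zero}.
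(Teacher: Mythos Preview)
Your argument is correct and complete. The paper itself provides no proof for this proposition-and-definition, treating both the well-definedness and the characteristic property as immediate; your write-up makes explicit exactly the ingredients the paper leaves implicit (\cref{ax15} to detach the zero entries, the second property of \indexors to realise the nonzero index set inside~\(\fF\), and \cref{axsup,zero} for the equivalence), so there is nothing to compare beyond noting that you have supplied the details the authors omitted.
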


\begin{fact} \label{factsuciso} Let \(\alpha,\beta\) be elements of \(E\).
\begin{itemize}
\item \(\suc(\alpha)<\suc(\beta)\) if and only if \(\alpha<\beta\).
\item \(\suc(\alpha)\leq \suc(\beta)\) if and only if \(\alpha\leq \beta\).
\end{itemize}
\end{fact}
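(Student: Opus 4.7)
The plan is to derive both equivalences by simply chaining the two adjoint-style axioms \cref{axsuccunaire} and \cref{axsuccunaire2}, which together characterise \(\suc\) via its interaction with \(<\) and \(\leq\). These axioms state that \(\gamma<\suc(\delta)\iff\gamma\leq\delta\) and \(\suc(\delta)\leq\gamma\iff\delta<\gamma\); once we notice that the left-hand sides of both bullets of the fact involve \(\suc\) on \emph{both} sides, we can peel off one \(\suc\) at a time.

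For the first bullet, I would apply \cref{axsuccunaire} with the substitution \(\gamma:=\suc(\alpha)\), \(\delta:=\beta\) to obtain \(\suc(\alpha)<\suc(\beta)\iff\suc(\alpha)\leq\beta\); then apply \cref{axsuccunaire2} with \(\delta:=\alpha\), \(\gamma:=\beta\) to rewrite \(\suc(\alpha)\leq\beta\iff\alpha<\beta\). Chaining the two equivalences gives the claim.

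For the second bullet, I would proceed symmetrically: apply \cref{axsuccunaire2} with \(\delta:=\alpha\), \(\gamma:=\suc(\beta)\) to obtain \(\suc(\alpha)\leq\suc(\beta)\iff\alpha<\suc(\beta)\), and then \cref{axsuccunaire} with \(\gamma:=\alpha\), \(\delta:=\beta\) to rewrite \(\alpha<\suc(\beta)\iff\alpha\leq\beta\).

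There is essentially no obstacle here: the fact is a pure book-keeping consequence of the two succession axioms, and no use of the \(\sup\)-axioms, transitivity, irreflexivity, or \cref{ax15} is required. The only thing worth flagging is that both equivalences are two-way and constructive because each individual axiom used is itself stated as an \emph{if and only if}; no disjunction or case split is invoked.
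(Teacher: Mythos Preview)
Your proof is correct and follows exactly the paper's approach: the paper's own proof is the single line ``Use \cref{axsuccunaire,axsuccunaire2},'' and you have simply spelled out the two chains of equivalences that this entails.
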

\begin{proof}
Use \cref{axsuccunaire,axsuccunaire2}.  
\end{proof}
%

\begin{fact} \label{factreverse1013}
\Cref{supsucfini,ax10,ax11,ax14} are in fact equivalences:
\begin{itemize}
\item [\ref{supsucfini}.] \(\alpha<\gamma\) and \(\beta<\gamma\) hold simultaneously if and only if \(\sup(\alpha,\beta)<\gamma\); 
\item [\ref{ax10}.] \(\alpha<\sup(\alpha,\beta)\) if and only if \(\alpha<\beta\);
\item [\ref{ax11}.]   if \(\gamma<\alpha\), then \(\alpha\leq\sup(\beta,\gamma)\) holds if and only if \(\alpha\leq \beta\);
\item  [\ref{ax14}.]  \(\alpha\leq \beta\) if and only if \(\gamma<\beta\) for all \(\gamma<\alpha\).
\end{itemize}
\end{fact}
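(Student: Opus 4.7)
The plan is to establish in each case the new direction of the equivalence, since the other direction is precisely the stated axiom. In all four cases the argument reduces to combining the characteristic property of $\sup$ (\cref{axsup} in the finite form provided by \cref{propdefsupfini}) with one of the transitivity axioms \cref{transun,transdeux,transtrois}.

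First I would record a consequence of the characteristic property of the finite $\sup$ that will be reused throughout: for any $\alpha,\beta\in E$ one has $\alpha\leq\sup(\alpha,\beta)$ and $\beta\leq\sup(\alpha,\beta)$, and likewise $\beta\leq\sup(\beta,\gamma)$. This follows by applying the characteristic property in \cref{propdefsupfini} with the parameter on the right-hand side chosen to be $\sup(\alpha,\beta)$ itself; the resulting inequality $\sup(\alpha,\beta)\leq\sup(\alpha,\beta)$ holds by reflexivity (\cref{refantisym}), and the equivalence then yields the componentwise inequalities.

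With this observation in hand the four arguments are essentially one-line each. For \cref{supsucfini}, assuming $\sup(\alpha,\beta)<\gamma$ we get $\alpha\leq\sup(\alpha,\beta)<\gamma$ and $\beta\leq\sup(\alpha,\beta)<\gamma$, whence $\alpha<\gamma$ and $\beta<\gamma$ by \cref{transtrois}. For \cref{ax10}, assuming $\alpha<\beta$ we have $\alpha<\beta\leq\sup(\alpha,\beta)$, so \cref{transtrois} yields $\alpha<\sup(\alpha,\beta)$. For \cref{ax11}, assuming $\alpha\leq\beta$ (the hypothesis $\gamma<\alpha$ plays no role in this direction) we have $\alpha\leq\beta\leq\sup(\beta,\gamma)$, so \cref{transun} yields $\alpha\leq\sup(\beta,\gamma)$. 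Finally, for \cref{ax14}, assuming $\alpha\leq\beta$ and $\gamma<\alpha$ we get $\gamma<\alpha\leq\beta$, whence $\gamma<\beta$ by \cref{transtrois}.

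There is essentially no obstacle here. The only minor subtlety is that the binary $\sup$ of \cref{defibinarysup} is defined by cases according to whether its arguments equal $0_E$, but \cref{propdefsupfini} has already transferred the characteristic property of \cref{axsup} to this finite setting, so no case analysis on \cref{ax15} intrudes into the argument.
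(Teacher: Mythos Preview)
Your approach is correct and matches the paper's own proof, which simply says ``Use the transitivities and the characteristic property of \(\sup\)''. One small slip: in the reverse directions of \cref{ax10} and \cref{ax14} you invoke \cref{transtrois}, but the pattern there is \(<\) followed by \(\leq\), which is \cref{transdeux}; \cref{transtrois} is the pattern \(\leq\) followed by \(<\), which you correctly use for \cref{supsucfini}.
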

\begin{proof}
Use the transitivities  and the characteristic property of \(\sup\).
\end{proof}
%

\begin{fact}[\(\suc\) commutes with finite \(\sup\)'s, notation as in {\cref{propdefsupfini}}] \label{factsupsuc}\leavevmode
We have 
\(\sup(\suc(\alpha),\suc(\beta))=\suc(\sup(\alpha,\beta))\) and
more generally  \(\sup(\suc(\alpha^1),\allowbreak\dots,\suc(\alpha^r))=\suc(\sup(\alpha^1,\dots,\alpha^r))\).
\\
In particular, if \(\alpha^1<\gamma\), \dots, \(\alpha^r<\gamma\), then \(\sup(\alpha^1,\dots,\alpha^r)<\gamma\).
\end{fact}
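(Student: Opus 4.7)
The plan is to prove the binary equality $\sup(\suc(\alpha),\suc(\beta))=\suc(\sup(\alpha,\beta))$ by antisymmetry (\cref{refantisym}), deduce the $r$-ary version by the same argument (or by a short induction on~$r$) using the generalised characteristic property from \cref{propdefsupfini}, and finally read off the \emph{in particular} clause as a consequence via \cref{axsuccunaire2}.

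For the inequality $\sup(\suc(\alpha),\suc(\beta))\leq \suc(\sup(\alpha,\beta))$, I use the characteristic property of \(\sup\) (\cref{propdefsupfini}): it suffices to show $\suc(\alpha)\leq \suc(\sup(\alpha,\beta))$ and $\suc(\beta)\leq \suc(\sup(\alpha,\beta))$. By \cref{factsuciso}, this reduces to $\alpha\leq \sup(\alpha,\beta)$ and $\beta\leq \sup(\alpha,\beta)$, both of which follow from the characteristic property of \(\sup\) applied with reflexivity of $\sup(\alpha,\beta)\leq\sup(\alpha,\beta)$.

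For the reverse inequality $\suc(\sup(\alpha,\beta))\leq \sup(\suc(\alpha),\suc(\beta))$, by \cref{axsuccunaire2} it suffices to prove $\sup(\alpha,\beta)<\sup(\suc(\alpha),\suc(\beta))$. From the parenthetical remark in \cref{axsuccunaire} we have $\alpha<\suc(\alpha)$, and from the characteristic property we have $\suc(\alpha)\leq \sup(\suc(\alpha),\suc(\beta))$; combining these by \cref{transdeux} yields $\alpha<\sup(\suc(\alpha),\suc(\beta))$. Symmetrically, $\beta<\sup(\suc(\alpha),\suc(\beta))$. Then \cref{supsucfini} gives the desired strict inequality. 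Antisymmetry closes the binary case.

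The $r$-ary statement goes through by the very same argument, simply replacing the characteristic property of binary $\sup$ with its generalisation in \cref{propdefsupfini} (here one has to handle the convention that the zero arguments are ignored, but since $0_E<\suc(0_E)$ the matching between the $\alpha^k=0_E$ terms and their successors $\suc(\alpha^k)=\suc(0_E)$ is harmless); alternatively one inducts on $r$, plugging the binary equality into $\sup(\suc(\alpha^1),\dots,\suc(\alpha^r))=\sup(\suc(\alpha^1),\sup(\suc(\alpha^2),\dots,\suc(\alpha^r)))$. Finally, for the \emph{in particular}: if $\alpha^k<\gamma$ for each $k$, then \cref{axsuccunaire2} gives $\suc(\alpha^k)\leq \gamma$, hence by the characteristic property $\sup(\suc(\alpha^1),\dots,\suc(\alpha^r))\leq\gamma$; using the equality just proved this reads $\suc(\sup(\alpha^1,\dots,\alpha^r))\leq\gamma$, and one last application of \cref{axsuccunaire2} yields $\sup(\alpha^1,\dots,\alpha^r)<\gamma$. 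The only delicate point is the bookkeeping in \cref{propdefsupfini} when some $\alpha^k$ vanish, but this is a matter of case distinction, not of mathematical content.
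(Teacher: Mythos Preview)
Your proof is correct. The paper takes a slightly slicker route: instead of proving the two inequalities separately, it shows in one stroke that for an arbitrary test element \(\gamma\) one has the chain of equivalences
\[
\suc(\sup(\alpha,\beta))\leq\gamma \iff \sup(\alpha,\beta)<\gamma \iff (\alpha<\gamma\text{ and }\beta<\gamma) \iff (\suc(\alpha)\leq\gamma\text{ and }\suc(\beta)\leq\gamma) \iff \sup(\suc(\alpha),\suc(\beta))\leq\gamma,
\]
which by antisymmetry yields the equality (take \(\gamma\) to be each side in turn). This ``Yoneda'' style argument uses exactly the same ingredients you invoke (\cref{axsuccunaire2}, \cref{supsucfini} in its equivalence form from \cref{factreverse1013}, and the characteristic property of \(\sup\)), just more economically: where you prove two inequalities, each requiring a couple of steps, the paper compresses everything into one chain. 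Your derivation of the \emph{in particular} clause via the equality and \cref{axsuccunaire2} is fine; the paper leaves this implicit, since the \(r\)-ary version of \cref{supsucfini} is already the middle equivalence in the chain.
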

\begin{proof} It suffices to prove \(\suc(\sup(\alpha,\beta))=\sup(\suc(\alpha),\suc(\beta))\). We have the following chain of equivalences: \(\suc(\sup(\alpha,\beta))\leq\gamma \iff\)
\(\sup(\alpha,\beta)<\gamma \iff\)
\((\alpha<\gamma\) and \({\beta<\gamma})\iff\)
 \((\suc(\alpha)\leq\gamma\)  and \(\suc(\beta)\leq\gamma)\iff\) \(\sup(\suc(\alpha),\suc(\beta))\leq \gamma\).
\end{proof}

\begin{propdef}[definition of infinitary \(\suc\) and its characteristic property] \label{factsupsucinfinis} 
For any  \((\alpha_i)_{i\in J}\in\Fam(\fF,E)\), we define 
\(\s{\alpha_i}{i\in J}=\sup(\suc(\alpha_i))_{i\in J}\).
Then we get the following equivalence:
\[\alpha_i<\beta\text{ for all }i\in J\ \text{ if and only if }\ \s{\alpha_i}{i\in J}\leq \beta \text. 
\]
\end{propdef}
\begin{proof} Use \cref{axsup,axsuccunaire2}.
\end{proof}
%


We write \fbox{\(F\subseteq_f I\)} in order to express that \(F\) is a finitely enumerated 
subset 
of~\(I\). 

\begin{fact} \label{factltle1} Let \(\alpha,\beta^1,\dots,\beta^m\in E\).%
\begin{enumerate}[label=\textit{\arabic*.},ref=\textit{\arabic*}]
\item\label{factlte1-1} Assume that \(\alpha=\s{\alpha_i}{i\in J}\) with \((\alpha_i)_{i\in J}\in\Fam(\fF,E)\) and that \(\alpha_i<\sup(\beta^1,\allowbreak\dots,\beta^m)\) for all \(i\in J\). Then \(\alpha\leq \sup(\beta^1,\dots,\beta^m)\).
\item\label{factlte1-2} Assume that \(\beta^k=\suc ((\beta^k)_i)_{i\in J_k}\) with \(((\beta^k)_i)_{i\in J_k}\in\Fam(\fF,E)\) for \(k\in\lrbm\).
\\
Let \hbox{\(F_1\subseteq_f J_1, \dots, F_m\subseteq_f J_m\)} not all be empty.  If 
\[
  \alpha\leq \sup((\beta^k)_j)_{k\in\lrbm,\;j\in F_k},
\]  
then \(\alpha< \sup(\beta^1,\dots,\beta^m)\).
\end{enumerate}
\end{fact}
\begin{proof}
\ref{factlte1-1}. This is \cref{factsupsucinfinis}.
\\
\ref{factlte1-2}. Suppose e.g.\ that \(F_1\) is nonempty. Then
\(\alpha\leq \sup((\beta^1)_j)_{j \in F_1}<\beta^1\leq \sup(\beta^1,\allowbreak\dots,\allowbreak\beta^m)\). The strict inequality comes from \cref{factsupsuc} because all \((\beta^1)_j\)'s are \(<\beta^1\) by \cref{factsupsucinfinis}.
\end{proof}
%

\section{Inductive construction of ordinals}\label{ConstOrd}

\CAdre{.66} {In \cref{ConstOrd,PropfondOrd}, the set \(\fF\) of \indexors is fixed but often implied.}
 
We shall define a set of ordinals \(\Ord\) (more precisely \(\Ord_\fF\)) 
and we shall prove that it is an initial object in the category of \epts{\fF}.

First we define a set \(\ord\) of names for \(\fF\)-indexed ordinals by an inductive definition.
The simplest inductive definition of an infinite set is that of \(\NN\): it admits an element \(0\) and a successor map \(x\mt s(x)\colon \NN\to \NN\).
The inductive definition of \(\ord\) is very similar to that of \(\NN\). 
In \(\NN\), each element is either  \(0\) or an \(s(x)\) for an~\(x\in\NN\). 
Similarly, in \(\ord\), 
each element is either \(\und0\) or the \(\suc\) of an \(\fF\)-indexed family in \(\ord\); we denote by \(\ord\sta\) the set of elements of this second type. 

\begin{definition} \label{deford}
The set \(\ord\)  (more precisely \(\ord_\fF\)) is defined in an inductive way: it is to admit a distinguished  element \(\und0\) and  a map 
\[\suc\colon\Fam(\fF,\ord)\to\ord\text.\]
\end{definition}
\noindent N.B.: The only constraint in this inductive definition is that \(\suc\) be indeed a map from~\(\Fam(\fF,\ord)\) to \(\ord\). 

An element of \(\ord\)
will be called \emph{[name of an] ordinal} in the sequel.

When \(\fF=\fF_2\), we get  the set of names of countable ordinals, denoted by \(\ord_2\).
%
\begin{remark} \label{remdeford}
Each  element \(\alpha\in\ord\sta\) 
is given with two data: 
\begin{itemize}
\item the \indexor used in the definition of \(\alpha\): it will be denoted by~\(\In_\alpha\);
\item the family \(\chi_\ord(\alpha,i)_{i\in\In_\alpha}\) of its \emph{definitional subordinals}, i.e.\ the element of \(\Fam(\fF,\ord)\) such that \(\alpha=\s{\chi_\ord(\alpha,i)}{i\in\In_\alpha}\).
\end{itemize}
Thus the inductive  definition of \(\ord\) implies the existence 
of a map \(\alpha\mt\In_\alpha\colon\ord\sta\to\fF\) and 
the existence of a dependent
family \((\alpha,i)\mt\chi_\ord(\alpha,i)\) which is defined for \(\alpha\in\ord\sta\) and \(i\in\In_\alpha\). 
 In order to make the text more readable, we shall perform a slight abuse of notation: we shall not mention the construction of the dependent family \(\chi_\ord\), and the notation \(\alpha_i\) will be an abbreviation for
 \(\chi_\ord(\alpha,i)\). 
 With these conventions we may write \fbox{\,\(\alpha=\s{\alpha_i}{i\in\In_{\alpha}}\)}. \eoe
 
\end{remark}

For   \(\alpha^1,\dots,\alpha^r\in\ord\) we define  \fbox{\(\suc(\alpha^1,\dots,\alpha^r)=\s{\alpha^i}{i\in\lrbr}\)}.

In particular, if  \(\alpha\in\ord\),  its \emph{immediate successor}
  \hbox{\(\suc(\alpha)\)} is the element 
\(\beta=\s{\beta_i}{i\in\In_{\beta}}\), where \(\In_\beta=\NN_1=\so 0\) and \(\beta_0=\alpha\).
The sequence \((\und m)_{m\in\NN}\)  in \(\ord\) is defined inductively by
  \(\und{m+1}=\suc(\und m)\). 
Then we can define \fbox{\(\omega=\s{\und{n}}{n\in\NN}\)}.

In order to prove a property for \(\alpha=\s{\alpha_i}{i\in\In_\alpha}\), it is sufficient to prove the property for each~\(\alpha_i\). 
In a similar way we can construct inductively a map whose domain is \(\ord\), or define inductively a predicate on~\(\ord\).
This is stated precisely in \cref{factsousord} and done e.g.\ in \cref{defsousord,defsupord} and more generally throughout the rest of this article.

\subsection{Subordinals} 
Here is a correct inductive definition.

\begin{definition} \label{defsousord} 
Let \(\alpha=\s{\alpha_i}{i\in\In_\alpha}\in\ord\sta\). An element \(\beta\) of \(\ord\) is a \emph{definitional subordinal} of \(\alpha\) if \(\beta=\alpha_i\) for an \(i\in \In_\alpha\): we write this \(\beta\lessdot_1 \alpha\). An element \(\gamma\) is a \emph{subordinal of \(\alpha\)} if it is a definitional subordinal of~\(\alpha\) or a subordinal of a definitional subordinal of \(\alpha\).
We write this \(\gamma\lessdot \alpha\).
\end{definition}

Thus \(\und0\) is the only element of \(\ord\) which has no subordinal.

The following fact acknowledges that the definition of the relations $\cdot\lessdot_1\cdot$ and $\cdot\lessdot\cdot$ is a correct inductive definition on $\ord$.


\begin{fact} \label{factsousord}
The relations \(\lessdot_1\) and \(\lessdot\)  on \(\ord\) are well-founded.
\end{fact}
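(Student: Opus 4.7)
The plan is to reduce both statements to the induction principle packaged with the inductive definition of \(\ord\) in \cref{deford}: to define a family \((\Phi(\alpha))_{\alpha\in\ord}\), it suffices to specify \(\Phi(\und 0)\) and to specify \(\Phi(\alpha)\) from \((\Phi(\alpha_i))_{i\in\In_\alpha}\) when \(\alpha=\s{\alpha_i}{i\in\In_\alpha}\in\ord\sta\), with uniqueness granted by the same mechanism.

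Well-foundedness of \(\lessdot_1\) is then essentially immediate. Given a family \((E_x)_{x\in\ord}\) and a construction \(\gamma\) taking \(a\in\ord\) and \(\varphi\in\prod_{x\lessdot_1 a}E_x\) to \(\gamma(a,\varphi)\in E_a\), I would define \(\Phi\) by structural induction on \(\ord\). At \(\und 0\) there are no \(\lessdot_1\)-predecessors, and one sets \(\Phi(\und 0)=\gamma(\und 0,\emptyset)\). At \(\alpha=\s{\alpha_i}{i\in\In_\alpha}\), the \(\lessdot_1\)-predecessors of \(\alpha\) are exactly the \(\alpha_i\), so the family \((\Phi(\alpha_i))_{i\in\In_\alpha}\) furnished by the induction hypothesis supplies the required \(\varphi\), and one sets \(\Phi(\alpha)=\gamma(\alpha,\varphi)\). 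The defining equation \(\Phi(a)=\gamma(a,\Phi|_{x\lessdot_1 a})\) holds by construction, and uniqueness follows by the same induction.

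For \(\lessdot\) I would invoke the standard fact that the transitive closure of a well-founded relation is well-founded, implemented here by a \(\lessdot_1\)-recursion that produces not just the single value \(\Phi(\alpha)\) but the whole restriction \(\Phi_\alpha=\Phi|_{\{x:\,x\lessdot\alpha\text{ or }x=\alpha\}}\). At \(\und 0\) this is just the value \(\gamma(\und 0,\emptyset)\). At \(\alpha=\s{\alpha_i}{i\in\In_\alpha}\) I would glue the \((\Phi_{\alpha_i})_{i\in\In_\alpha}\) delivered by the induction hypothesis to cover \(\bigcup_{i\in\In_\alpha}\{x:\,x\lessdot\alpha_i\text{ or }x=\alpha_i\}=\{x:\,x\lessdot\alpha\}\), and set the value of \(\Phi_\alpha\) at \(\alpha\) to be \(\gamma\) applied to that glued family; finally \(\Phi(\alpha):=\Phi_\alpha(\alpha)\). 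The main obstacle is coherence of the gluing, since distinct indices \(i,j\in\In_\alpha\) may share subordinals and the values prescribed on them by \(\Phi_{\alpha_i}\) and \(\Phi_{\alpha_j}\) must agree. This is handled by an auxiliary \(\lessdot_1\)-induction showing that any two \(\lessdot\)-recursive solutions agree on any common subordinal — on such a subordinal the value is forced to be \(\gamma\) applied to the family below it, independently of the ancestor through which we passed — and the same argument yields uniqueness of the global \(\Phi\).
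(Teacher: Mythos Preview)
Your proposal is correct and is precisely the argument the paper has in mind: the paper states \cref{factsousord} without proof, treating well-foundedness of \(\lessdot_1\) as a restatement of the structural recursion principle packaged with the inductive definition of \(\ord\), and well-foundedness of \(\lessdot\) as the routine consequence for its transitive closure. You have simply unpacked these two steps, including the coherence check for the gluing, which is exactly the standard verification the paper leaves implicit.
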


Consequently there is no infinite branch in the tree of subordinals
of an element of \(\ord\), in the following sense.

\begin{fact} \label{factSousord}
A sequence \((\alpha^j)_{j=1,2,\dots}\) in \(\ord\), where each \(\alpha^{j+1}\) is a subordinal of \(\alpha^{j}\), reaches in a finite number of steps \(\alpha^r=\und0\).
\end{fact}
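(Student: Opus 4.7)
The plan is to invoke the well-foundedness of the subordinal relation $\lessdot$ on $\ord$ that was established in Fact \ref{factsousord}, and to recast the descent through the sequence as a $\lessdot$-descent that well-founded induction can handle. Specifically, for each $\alpha \in \ord$ I introduce the predicate $P(\alpha)$: every sequence $\beta^1, \beta^2, \dots$ with $\beta^1 = \alpha$ and $\beta^{j+1} \lessdot \beta^j$ whenever $\beta^{j+1}$ is defined contains some term $\beta^r = \und 0$. Proving $P(\alpha)$ for every $\alpha \in \ord$ suffices, since applying it to $\alpha^1$ produces the $r$ claimed by the statement.

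I prove $\forall \alpha\, P(\alpha)$ by $\lessdot$-induction. The base case $\alpha = \und 0$ is immediate: take $r = 1$, as $\und 0$ has no subordinal and the sequence cannot be extended. For the inductive step, assume $\alpha = \s{\alpha_i}{i \in \In_\alpha} \in \ord\sta$ and that $P(\gamma)$ holds for every $\gamma \lessdot \alpha$. Given a sequence starting at $\beta^1 = \alpha$, since $\alpha \ne \und 0$ the sequence admits a term $\beta^2 \lessdot \alpha$ (otherwise it would have halted at a non-$\und 0$ value, with no $\und 0$ ever reached). Applying the induction hypothesis $P(\beta^2)$ to the shifted sequence $\gamma^j \eqdefi \beta^{j+1}$ yields some $s$ with $\gamma^s = \und 0$, so $\beta^{s+1} = \und 0$ and $r = s+1$ works.

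The main subtlety is really only in the statement of $P$: one must quantify over \emph{all} possible continuations of a descending chain starting at $\alpha$, so that the induction hypothesis can be applied to the tail sequence beginning at $\beta^2$. Once $P$ is set up in this form, the argument is an immediate application of the well-founded induction principle encoded in Fact \ref{factsousord}, and no further properties of the ordinal structure $(<, \leq, \sup, \suc)$ are needed.
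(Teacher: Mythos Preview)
Your proposal is correct and is exactly the argument the paper has in mind: the paper gives no explicit proof but introduces \cref{factSousord} with ``Consequently\dots'', indicating it is a direct application of the well-foundedness of \(\lessdot\) from \cref{factsousord}, which is precisely the \(\lessdot\)-induction you carry out.
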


Remark that in order to perform a construction (or a proof) by \(\lessdot_1\)-induction or by  \(\lessdot\)-induction, the case \(\und0\) has to be dealt with separately since it has no subordinal.
Nevertheless, we shall be able to avoid this case distinction until considering ordinal arithmetic on \cpageref{subsec-arit-ord}.

\subsection{Definition of the \texorpdfstring{\(\sup\)}{sup} law }

\begin{definition} \label{defsupord}
\begin{enumerate}
\item 
The law   \(\sup\colon\Fam(\fF,\ord\sta)\to\ord\sta\)  is defined in the following way.
%
Let \((\alpha^j)_{j\in J}\) be a family in \(\ord\sta\) with \(J\in\fF\).
If \(\alpha^j=\s{(\alpha^j)_i}{i \in I_{j}}\), then \fbox{\(\sup(\alpha^j)_{j\in J}\)} is the element \(\varepsilon=\s{\varepsilon_k}{k \in K}\), where 
\begin{itemize}
\item \(K\) is the disjoint union of the \(I_{j}
  \)'s;
\item \((\varepsilon_{k})_{k \in K}\) is the family defined by  
\(\varepsilon_k=(\alpha^j)_i\) if \(\iota_j(i)=k\) 

\end{itemize}
(here \(\iota_j\colon I_{j}
\to K\) is the injective map from \(I_{j}
\) to the disjoint union of the \(I_{j}
\)'s). 
We shall write \(\sup(\alpha^j)_{j\in\lrbr}=\sup(\alpha^1,\dots,\alpha^r)\). 

\item The \(\sup\) of a finite family in \(\ord\) is defined in the following way.
\[
  \sup(\alpha^1,\dots,\alpha^r)\eqdefi
\formule{\und0 \;\hbox{ if } \alpha^1=\dots=\alpha^r=\und0
\\[.5em]
\hbox{the \(\sup\) of the \(\alpha^k\in\ord\sta\) otherwise.}}
\] 
\end{enumerate}
\end{definition}

We note that Item 2 is formally included in Item 1 if we adopt the convention \(\In_{\und0}=\NN_0\). However, this convention would not allow us to define an arbitrary \(\fF\)-indexed \(\sup\) in  \(\ord\).

\subsection{\texorpdfstring{Definition of \(\leq \) and of \(<\)}{Definition of <= and of <}} 
The main job remains to be done, i.e.\ to define two binary relations \(\leq \) and \(<\) on \(\ord\) with the required properties, viz.
\begin{itemize}
\item the relation \gui{\(\alpha\leq \beta\) and \(\beta\leq \alpha\)} has to be an equivalence relation  (we shall denote by \(\Ord\) the quotient set);
\item the relations \(\leq \) and \(<\) and the maps \(\sup\) and \(\suc\) have to descend to the quotient (we shall not change their names), i.e.\ they have to be compatible with the equivalence relation;
\item with these maps and relations, \(\Ord\) has to be an \ept{\fF}.
\end{itemize}

Moreover, since the map \(\suc\colon\Fam(\fF,\ord)\to\ord\sta\) is defined before the map \(\sup\colon\Fam(\fF,\ord\sta)\to\ord\sta\), we have to verify in our construction that \cref{axsup} is satisfied in~\(\Ord\). This will be a consequence of \cref{factsucc} in the following.

For our job, we define inductively two asymmetric relations
between, on the left side, an element of \(\ord\) and, on the right side, \emph{a nonempty finitely enumerated set} 
of elements of \(\ord\), written as a list: 
\CAdre{.7}{\centerline{
\(\alpha\leq \beta^1,\dots,\beta^m\)\quad \hbox{  and  }\quad \(\alpha<\beta^1,\dots,\beta^m\)\quad (\(m\geq 1\)).}
\vspace{-.8em}
}

\begin{conventions}
  \begin{itemize}
  \item The letters \(\alpha\), \(\beta\), \(\gamma\),
    \(\varepsilon\), possibly with exponents, indices or primes, are
    used for elements of \(\ord\).
  \item If \(\alpha\) is an element of \(\ord\) and if \(F\) is a finite list, possibly empty, in \(\In_\alpha\),
we denote by \(\alpha_F\) the list of the \(\alpha_i\)'s with \(i\) in \(F\).
\end{itemize}
\end{conventions}

The two relations are defined by simultaneous induction in the following way.
\begin{framed}%
\noindent Particular cases involving \(\und0\) are avoided by using the convention \(\In_{\und0}=\NN_0\). Let \(m\) be an integer \(\geq 1\). \smallskip

\noindent\(\alpha\leq \beta^1,\dots,\beta^m\)\quad
is defined as\quad\(\alpha_i<\beta^1,\dots,\beta^m\) for all \(i\in \In_\alpha\).

\noindent\(\alpha< \beta^1,\dots,\beta^m\)\quad
is defined as\quad
\begin{tabular}[t]{@{}l@{}}
  there are \(F_1\subseteq_f \In_{\beta^1},\dots,F_m\subseteq_f \In_{\beta^m}\)\\
  not all empty with \(\alpha\leq \beta^1_{F_1}, \dots,\beta^m_{F_m}\).
\end{tabular}
\end{framed}
This definition is correct since elements of \(\ord\)
are inductively defined and the pair of clauses is inductive. 

Without the convention that \(\In_{\und0}=\NN_0\), we would have had to include \cref{factOr00}
below in the definition. This convention is a little miracle allowing us to avoid a case-by-case reasoning with respect to the disjunction  \gui{\(\alpha= \und0\) or \(\alpha\in\ord\sta\)} in the proofs.
 
The meaning of the two relations is \(\alpha\leq \sup(\beta^1,\dots,\beta^m)\)
and \(\alpha< \sup(\beta^1,\allowbreak\dots,\beta^m)\).

\begin{lemma} \label{lemsupetlist}
We have \(\alpha<\beta^1,\dots,\beta^m\) if and only if \(\alpha< \sup(\beta^1,\dots,\beta^m)\). 
\\
Similarly, we have \(\alpha\leq \beta^1,\dots,\beta^m\) if and only if \(\alpha\leq \sup(\beta^1,\dots,\beta^m)\).
\end{lemma}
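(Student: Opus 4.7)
The plan is to prove the two equivalences by unfolding the inductive definitions of the list-form relations, using the explicit construction of $\sup$ from \cref{defsupord}. Both statements are in the end essentially definitional once one identifies $\In_{\sup(\beta^1,\dots,\beta^m)}$ with the disjoint union $\sum_{k=1}^m \In_{\beta^k}$, so the work consists in bookkeeping rather than in any genuine induction.

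I would first dispose of the $<$-equivalence, which requires no induction at all. Set $\gamma := \sup(\beta^1,\dots,\beta^m)$. If all $\beta^k = \und0$ then $\gamma=\und0$, and by the convention $\In_{\und0}=\NN_0$ neither a nonempty $F\subseteq_f\In_\gamma$ nor a not-all-empty family $(F_k)_{k}$ with $F_k\subseteq_f\In_{\beta^k}$ exists, so both sides are false. Otherwise \cref{defsupord}.1 gives an isomorphism $\In_\gamma\cong\sum_k \In_{\beta^k}$ via the coproduct injections $\iota_k$, with $\gamma_{\iota_k(j)}=(\beta^k)_j$. A finitely enumerated $F\subseteq_f\In_\gamma$ then corresponds bijectively to a family $(F_1,\dots,F_m)$ with $F_k\subseteq_f\In_{\beta^k}$, and $F$ is nonempty iff the family is not all empty; the list $\gamma_F$ is, up to the order of enumeration, the concatenation $\beta^1_{F_1},\dots,\beta^m_{F_m}$. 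Since the defining clauses of $\leq$ and $<$ on a list depend only on the underlying family of entries (immediate by structural induction on the clauses), the condition ``$\alpha\leq\gamma_F$'' is equivalent to ``$\alpha\leq\beta^1_{F_1},\dots,\beta^m_{F_m}$''. Unfolding the definition of $\alpha<\gamma$ thus gives exactly the definition of $\alpha<\beta^1,\dots,\beta^m$.

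The $\leq$-equivalence then follows without further induction on $\alpha$. By the defining clauses, ``$\alpha\leq\beta^1,\dots,\beta^m$'' unfolds to ``$\alpha_i<\beta^1,\dots,\beta^m$ for all $i\in\In_\alpha$'', and ``$\alpha\leq\sup(\beta^1,\dots,\beta^m)$'' (the single-list case) unfolds to ``$\alpha_i<\sup(\beta^1,\dots,\beta^m)$ for all $i\in\In_\alpha$''. These two families of conditions are pointwise equivalent by the $<$-equivalence already established, applied to each $\alpha_i$.

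The only delicate point—and the one I expect to have to spell out carefully—is the handling of entries $\beta^k=\und0$ in the $<$-case: the convention $\In_{\und0}=\NN_0$ forces every $F_k\subseteq_f\In_{\beta^k}$ to be empty when $\beta^k=\und0$, and \cref{defsupord}.2 omits such entries from the $\sup$, so the correspondence between a nonempty $F\subseteq_f\In_\gamma$ and a not-all-empty family $(F_k)_k$ works uniformly without any case split on which $\beta^k$'s are zero. This is exactly the ``little miracle'' alluded to just after the framed inductive definition.
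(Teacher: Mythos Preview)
Your proposal is correct and follows essentially the same route as the paper: identify \(\In_{\sup(\beta^1,\dots,\beta^m)}\) with the disjoint union of the \(\In_{\beta^k}\)'s to translate the single-entry definition of \(<\) into the list form, and then obtain the \(\leq\)-equivalence by unfolding both sides and applying the \(<\)-equivalence pointwise to the~\(\alpha_i\). Your treatment is merely more explicit than the paper's about the \(\und0\)-entries and about why the list-form relations depend only on the underlying family, but these are exactly the bookkeeping points the paper leaves implicit.
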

\begin{proof}
Let us write 
\[
  \begin{aligned}
    \alpha\prec \beta^1,\dots,\beta^m&\text{ for }\alpha< \sup(\beta^1,\dots,\beta^m)\text,\\
    \alpha\preceq \beta^1,\dots,\beta^m&\text{ for }\alpha\leq \sup(\beta^1,\dots,\beta^m)\text.
\end{aligned}
\]
Let \(\varepsilon=\sup(\beta^1,\dots,\beta^m)\). Then \(\alpha\prec \beta^1,\dots,\beta^m\) if and only if \(\alpha\leq\varepsilon_F\) with \(F\) a nonempty finitely enumerated subset of the disjoint union~\(K\) of the \(\In_{\beta^j}\)'s and \(\varepsilon_k=(\beta^j)_i\) if \(k\) is the image of~\(i\) in \(K\); letting \(F_j=F\cap\In_{\beta^j}\), not all~\(F_j\)'s are empty and this may be rewritten as \(\alpha\leq \beta^1_{F_1}, \dots,\beta^m_{F_m}\). This holds if and only if \(\alpha< \beta^1,\dots,\beta^m\).

We have \(\alpha\preceq \beta^1,\dots,\beta^m\) if and only if, for all \(i\in \In_\alpha\), \(\alpha_i<\varepsilon\), i.e.\ \(\alpha_i\prec\beta^1,\dots,\beta^m\), i.e.\ \(\alpha_i<\beta^1,\dots,\beta^m\); this holds if and only if \(\alpha\leq \beta^1,\dots,\beta^m\).
\end{proof}

The relation \(\alpha=_\Ord\beta\) is defined as meaning \gui{\(\alpha\leq \beta\) and \(\beta\leq \alpha\)}. 

We shall show in \cref{PropfondOrd} that the relation \(\cdot=_\Ord\cdot\) is an equivalence relation and we shall define the set \(\Ord\) as the quotient of \(\ord\) by this relation.

Let us note that until \cref{thOrd1}, the symbol \(=\) between two elements of \(\ord\) is the equality in \(\ord\) and has not the meaning of  \(=_\Ord\). Nevertheless, after having shown that the
relations and the laws of \(\ord\) descend to the quotient \(\Ord\), the statements with the symbol \(=\) will also work for the symbol \(=_\Ord\).

\subsection{Finite ordinals, bounded ordinals}

We start with a few properties of~\(\und0\).

\begin{fact} \label{factOr00}  Let \(m\) be an integer \(\geq 1\), \(\alpha,\beta^1,\dots,\beta^m \in\ord\), and \(\gamma\in\ord\sta\). We have
\begin{enumerate}[label=\textit{\arabic*.},ref=\textit{\arabic*}]
\item\label{factOr001}  \(\und0\leq \beta^1,\dots,\beta^m\);
\item  \(\und0< \gamma,\beta^2,\dots,\beta^m\);
\item \(\alpha<\underbrace{\und0,\dots,\und0}_{m\text{ times}}\) is impossible.
\end{enumerate}
\end{fact}

\begin{proof}
This is straightforward from the definitions.
\end{proof}

\begin{remark} \label{rem-ax15}
  \cref{ax15} 
will be valid  in \(\Ord\) because every  element of \(\ord\) is given either as~\(\und0\) or as an element \(\gamma\in\ord\sta\), so that always \(\und0<\gamma\) by Item \emph{2} of \cref{factOr00}.\eoe
\end{remark}

\begin{fact} \label{factnatord}
Let \(m,n\in\NN\). Then
\begin{enumerate}[label=\textit{\arabic*.},ref=\textit{\arabic*}]
\item \(m\leq n\) if and only if \(\und m\leq \und n\);
\item \(m< n\) if and only if \(\und m< \und n\); 
\item \(\und m\leq \und n\) and \(\und n< \und m\) are incompatible.
\end{enumerate}
\end{fact}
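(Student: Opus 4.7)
The plan is to unfold the inductive definitions of $\leq$ and $<$ on the specific elements $\und n$, observing that for $n\geq 1$ the \indexor $\In_{\und n}$ equals $\NN_1=\{0\}$ and $(\und n)_0=\und{n-1}$, while $\In_{\und0}=\NN_0=\emptyset$. Four recurrences fall out immediately from the definitions boxed on the preceding page. First, for any $\beta$ one has $\und0\leq\beta$ vacuously (no $i\in\In_{\und0}$ to check). Second, $\und m<\und0$ is impossible (no nonempty $F\subseteq_f\In_{\und0}$ exists); this is the content of Item \emph{3} of \cref{factOr00}. Third, for $m\geq1$, $\und m\leq\und n$ reduces to $(\und m)_0<\und n$, i.e.\ $\und{m-1}<\und n$. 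Fourth, for $n\geq 1$, $\und m<\und n$ unfolds to the existence of a nonempty $F\subseteq_f\{0\}$ with $\und m\leq(\und n)_F$, which forces $F=\{0\}$ and the condition $\und m\leq\und{n-1}$.

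Armed with these four observations I will then prove Items \emph{1} and \emph{2} simultaneously by induction on the natural number $m+n$. The base cases $m=0$ or $n=0$ are handled by the first two observations: $\und0\leq\und n$ matches $0\leq n$; $\und m\leq\und0$ reduces by the third recurrence (for $m\geq 1$) to $\und{m-1}<\und0$ which is false, matching the falsity of $m\leq 0$ for $m\geq1$; and $\und m<\und0$ is false, matching $m<0$. For the inductive step with $m,n\geq 1$, the third and fourth recurrences give $\und m\leq\und n\iff\und{m-1}<\und n\iff \und{m-1}\leq\und{n-1}$, which by the induction hypothesis is equivalent to $m-1\leq n-1$, i.e.\ $m\leq n$. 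Symmetrically, $\und m<\und n\iff\und m\leq\und{n-1}\iff m\leq n-1\iff m<n$.

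Item \emph{3} is then a direct consequence of Items \emph{1} and \emph{2}: if both $\und m\leq\und n$ and $\und n<\und m$ held, we would obtain $m\leq n$ and $n<m$ in $\NN$, which is impossible.

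The only place where care is needed is the bookkeeping of base cases, since the convention $\In_{\und0}=\NN_0$ makes several clauses vacuously true or vacuously false; but this is precisely the \gui{little miracle} highlighted by the authors, and it removes any genuine difficulty. I do not expect any real obstacle, only the need to keep the four recurrences straight.
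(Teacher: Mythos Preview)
Your proof is correct and follows essentially the same approach as the paper: both arguments unwind the inductive definitions of $\leq$ and $<$ on the chain $\und0,\und1,\und2,\dots$ via the recurrences $\und m\leq\und n\iff\und{m-1}<\und n$ and $\und m<\und n\iff\und m\leq\und{n-1}$, and Item~\emph{3} is deduced from Items~\emph{1} and~\emph{2}. The only organisational difference is that you run a single simultaneous induction on $m+n$, whereas the paper treats the forward implications by induction on $r=n-m$ and the reverse implications by induction on $m$; this is a matter of bookkeeping, not substance.
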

\begin{proof}
Concerning the direct implications in \emph{1} and \emph{2}, we write \(n=m+r\) and we do an induction on~\(r\). 
For the reverse implications, cases \(m=0\) and \(n=0\) are already known. 
Next, we see that \(\und{m+1}\leq \und{n+1}\) implies 
\(\und{m}\leq \und{n}\), and that \(\und{m+1}< \und{n+1}\) implies 
\(\und{m}< \und{n}\). This allows us to conclude by induction on \(m\).
\\
Item \emph{3} follows from Items \emph{1} and \emph{2}. 
\end{proof}

An element \(\alpha\in\ord\) is said to be \emph{finite} if \(\alpha=_\Ord\und m\)  for an \(m\in\NN\),   \emph{bounded} if \(\alpha\leq \und m\) for an \(m\in\NN\). Bounded ordinals are much more complicated than finite ordinals (see \cref{exaLLPO,exa123}).

In \cref{in-class-math}, we shall discuss what the relations \(\leq\)~and~\(<\) on the set~\(\ord_\fF\) become in classical mathematics.

\subsection{First consequences} 

The following fact shows that the \(\suc\) law will satisfy the characteristic property given in \cref{axsup} when we shall know that it descends to the quotient \(\Ord\).
\begin{fact}[\sucdef] \label{factsucc}
We have \(\alpha\leq \beta\) if and only if \(\alpha_i<\beta\) for all \(i\in \In_\alpha\).
\end{fact}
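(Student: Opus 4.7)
The plan is very short because the fact is essentially a reading of the inductive definition. My strategy is to unfold that definition in the single-element list case and to check that the convention $\In_{\und0}=\NN_0$ takes care of the boundary.

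Recall that the list relation has been defined by
\[
  \alpha\leq\beta^1,\dots,\beta^m\quad(m\geq 1)
  \quad\text{iff}\quad
  \alpha_i<\beta^1,\dots,\beta^m\text{ for every }i\in\In_\alpha.
\]
Specialising to $m=1$ and a single right-hand element $\beta$ turns this into $\alpha\leq\beta$ iff $\alpha_i<\beta$ for every $i\in\In_\alpha$, which is exactly the conclusion of the fact. So the entire content of the proof is: read off the definition with $m=1$. There is no induction to perform, no case split to set up, and nothing to combine.

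The one place where one might worry is the boundary case $\alpha=\und0$, since $\und0$ has no definitional subordinals and one could fear that the equivalence breaks. The convention $\In_{\und0}=\NN_0$ built into the definition of the two relations handles this automatically: the universal quantifier on the right ranges over the empty set, hence holds vacuously, while the left-hand side $\und0\leq\beta$ holds by Item~1 of Fact~\ref{factOr00}. In fact this is precisely the reason the convention was put in place, as the text emphasises right after the definition. I therefore do not anticipate any real obstacle; the whole point of having adopted the convention was to make this identity hold by definition, and the written proof will consist only of pointing at that definition.
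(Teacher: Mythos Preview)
Your proposal is correct and matches the paper's own proof exactly: the paper simply remarks that the equivalence is tautological, being nothing more than the definition of \(\alpha\leq\beta\) in the single-element case \(m=1\). Your additional remark about the convention \(\In_{\und0}=\NN_0\) is accurate and consistent with the text, though the paper does not spell it out again here.
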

\begin{proof}
This property is tautological: this is the definition of \(\alpha\leq \beta\). 
\end{proof}

Similarly, the following fact shows that the \(\sup\) law will satisfy the  characteristic property given in \cref{axsup} when we shall know that it descends to the quotient \(\Ord\). 

\begin{fact}[\supdef] \label{factsup}
Let \((\alpha^j)_{j\in J}\) be a family in \(\ord\sta\) with \(J\in\fF\), \(\gamma=\sup(\alpha^j)_{j\in J}\), and \(\beta\in\ord\).\\
We have \(\gamma\leq \beta\) if and only if \(\alpha^j\leq \beta\) for all \(j\in J\).
In particular, \(\sup(\alpha,\beta)\leq \beta \) if and only if \(\alpha\leq \beta\).
\end{fact}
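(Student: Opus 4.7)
The equivalence is a direct unfolding of the construction of $\sup$ together with the characteristic property \sucdef\ (\cref{factsucc}). Writing $\alpha^j=\suc((\alpha^j)_i)_{i\in I_j}$ for each $j\in J$, \cref{defsupord} gives $\gamma=\suc(\gamma_k)_{k\in K}$ with $K=\sum_{j\in J}I_j$ and $\gamma_{\iota_j(i)}=(\alpha^j)_i$, so in particular $\In_\gamma=K$.

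Now I would apply \cref{factsucc} once to the left-hand side: $\gamma\leq\beta$ holds if and only if $\gamma_k<\beta$ for all $k\in K$. Because $K$ is by construction the disjoint union of the $I_j$'s, a universal quantification over $k\in K$ is the same as a nested universal quantification, first over $j\in J$ and then over $i\in I_j$. So the condition rewrites as: for every $j\in J$ and every $i\in I_j$, $(\alpha^j)_i<\beta$. Applying \cref{factsucc} in the opposite direction to each $\alpha^j$ separately, the inner clause ``$(\alpha^j)_i<\beta$ for all $i\in I_j$'' is precisely $\alpha^j\leq\beta$, which delivers the desired equivalence.

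The ``in particular'' clause follows by specialising to a two-element family. When $\alpha,\beta\in\ord\sta$, the main statement reads $\sup(\alpha,\beta)\leq\beta$ iff $\alpha\leq\beta$ and $\beta\leq\beta$, so it reduces to reflexivity of $\leq$, which is obtained by $\lessdot_1$-induction (using \cref{factsucc} and the singleton witness $F=\{i\}$ in the definition of $\beta_i<\beta$) with base case supplied by \cref{factOr00}. The cases where $\alpha$ or $\beta$ equals $\und0$ are handled by Item~2 of \cref{defsupord} together with \cref{factOr00}. I do not expect a real obstacle: \cref{defsupord} is engineered precisely so that the successor-data of $\gamma$ is the disjoint union of the successor-data of the $\alpha^j$'s, which is exactly the shape \cref{factsucc} needs in order to translate $\gamma\leq\beta$ into a condition on the defining subordinals.
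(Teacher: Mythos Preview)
Your argument is correct and matches the paper's: both simply unfold the definition of \(\sup\) and of \(\leq\), observing that a universal quantifier over the disjoint union \(K=\sum_{j\in J}I_j\) splits into nested quantifiers over \(j\in J\) and \(i\in I_j\). The paper calls this a ``linguistic tautology'' and omits the ``in particular'' clause entirely; you are more careful there, correctly noting that it requires reflexivity \(\beta\leq\beta\) (which the paper only records later, in \cref{corfactOr1}) and sketching its inductive proof inline.
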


\noindent N.B.: The result is equally true for the \(\sup\) of a finite family in \(\ord\).

\begin{proof} This is another linguistic tautology.
We have \(\alpha^j=\s{(\alpha^j)_i}{i \in I_{j}}\) for an \(I_j\in\fF\).
By the definition of~\(\gamma\) and of~\(\leq\), the inequality \(\gamma\leq \beta\) means that for each \(j\in J\) and each \(i\in I_j\)  we have 
\((\alpha^j)_i<\beta\), i.e.\ that for each \(j\in J\) we have \(\alpha^j\leq \beta\). 
\end{proof}
The following fact shows that \cref{axsuccunaire,axsuccunaire2}  will be valid when we shall descend to the quotient \(\Ord\).
\begin{fact} \label{factOr01} 
\begin{enumerate}[label=\textit{\arabic*.},ref=\textit{\arabic*}]
\item\label{factOr01-1} \axsuccunaire.
We have \(\alpha<\suc(\beta)\) if and only if \(\alpha\leq \beta\).
\item\label{factOr01-2} \axsuccunair.
We have \(\beta<\alpha\) if and only if \(\suc(\beta)\leq \alpha\).
\end{enumerate}
\end{fact}
\begin{proof}
Recall that the element \(\gamma=\suc(\beta)\) is defined by \(\In_\gamma=\so 0\) and \(\gamma_0=\beta\). 

\noindent\ref{factOr01-1}. By definition, \(\alpha<\gamma\) means that \(\alpha\leq \gamma_F\) for a nonempty list \(F\subseteq_f\so0\). This forces \(F=[0]\) and \(\gamma_F=\beta\).

\noindent\ref{factOr01-2}. By definition, \(\gamma\leq \alpha\) means that \(\gamma_0<\alpha\), i.e.\ \(\beta<\alpha\).

\noindent Thus, better than equivalences, these are tautologies.
\end{proof}

The following fact will allow us to shorten certain proofs by induction.

\begin{fact} \label{factOr1}
\begin{enumerate}
%
\item [a.] We have an inequality \(\; \alpha\leq \beta\) if and only if for each \(i\in \In_\alpha\), 
there exists a nonempty  \(F_i\subseteq_f \In_\beta\)  such that \(\;\alpha_i\leq \beta_{F_i}\).
\item [b.] We have an inequality \(\; \alpha< \beta\) if and only if there exists 
a nonempty \(F\subseteq_f \In_\beta\)  such that  for each \(i\in \In_\alpha\) we have  \(\;\alpha_i< \beta_{F}\).
\end{enumerate}
\end{fact}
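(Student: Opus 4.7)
The plan is to observe that both parts of this fact are essentially unpackings of the inductive definition of $\leq$ and $<$, specialised to the case where the right-hand side is a list of length~one. No induction is needed beyond the one already built into the definition of the two relations; the whole argument is a matter of carefully parsing what the inductive clauses say when $m=1$.

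First I would prove part~\emph{b}. Applying the defining clause of $\alpha<\beta^1,\dots,\beta^m$ with $m=1$, the inequality $\alpha<\beta$ holds precisely when there exists a nonempty $F\subseteq_f\In_\beta$ (``not all empty'' reduces to ``nonempty'' when there is only one $F_i$) such that $\alpha\leq\beta_F$, where $\beta_F$ is the list $(\beta_j)_{j\in F}$ in the sense of the conventions. Now applying the defining clause of $\alpha\leq\beta^1,\dots,\beta^m$ to the list $\beta_F$ on the right, this secondary inequality holds if and only if $\alpha_i<\beta_F$ for every $i\in\In_\alpha$. Chaining the two equivalences yields \emph{b} in both directions at once.

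Next I would deduce part~\emph{a} from~\emph{b}. By the defining clause of $\alpha\leq\beta$ (the case $m=1$ of the $\leq$\nobreakdash-relation), we have $\alpha\leq\beta$ if and only if $\alpha_i<\beta$ for every $i\in\In_\alpha$. Applying~\emph{b} to each individual strict inequality $\alpha_i<\beta$ produces, for each $i\in\In_\alpha$, a nonempty $F_i\subseteq_f\In_\beta$ such that $(\alpha_i)_k<\beta_{F_i}$ for all $k\in\In_{\alpha_i}$; by the definition of $\leq$ applied to the list $\beta_{F_i}$, this is exactly $\alpha_i\leq\beta_{F_i}$. The converse traverses the same chain of equivalences in the opposite order.

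I do not expect any real obstacle: the statement is, modulo notation, a verbatim restatement of the definition. The only point where care is needed is to keep distinct the parsing of $\alpha\leq\beta_F$ as an instance of the \emph{list}-valued relation (whose right-hand side is the list $(\beta_j)_{j\in F}$) from the one-argument relation $\alpha\leq\beta$; once that distinction is made explicit, both equivalences fall out immediately from the framed definition.
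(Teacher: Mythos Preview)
Your proposal is correct and matches the paper's own treatment, which simply says ``Straightforward from the definitions.'' One minor remark: in part~\emph{a} you need not route through part~\emph{b}; the definition of $<$ with $m=1$ already gives $\alpha_i<\beta$ iff there is a nonempty $F_i\subseteq_f\In_\beta$ with $\alpha_i\leq\beta_{F_i}$, so no further unpacking and re-packing is required.
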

\begin{proof}
Straightforward from the definitions.
\end{proof}

Now we leave behind tautological proofs and turn to inductive proofs.

\penalty-2500
\begin{fact} \label{factOr0}
\begin{itemize}
%
\item \wkn. If \(\alpha\leq \beta^1,\dots,\beta^m\), then for each \(\beta\)
we have  \(\alpha\leq \beta,\beta^1,\dots,\beta^m\).
\item \ctn. If \(\alpha\leq \beta^1,\beta^1,\beta^2,\dots,\beta^m\) then \(\alpha\leq \beta^1,\beta^2,\dots,\beta^m\).
\item The same properties hold with \(<\) instead of \(\leq \).
\end{itemize}
\end{fact}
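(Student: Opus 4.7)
The plan is to split the four claims into a $<$-batch and a $\leq$-batch. Once weakening and contraction are established for $<$, the corresponding statements for $\leq$ will follow immediately by unfolding the defining clause $\alpha \leq \beta^1,\dots,\beta^m \iff (\forall i \in \In_\alpha)\;\alpha_i < \beta^1,\dots,\beta^m$ and refolding after applying the $<$-version. No induction will be required: at the $<$-level everything reduces to a syntactic rearrangement of the witness lists $F_k \subseteq_f \In_{\beta^k}$ supplied by the definition of $<$.

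For weakening of $<$, I will, from $\alpha < \beta^1,\dots,\beta^m$, unfold to obtain $F_k \subseteq_f \In_{\beta^k}$ not all empty with $\alpha \leq \beta^1_{F_1},\dots,\beta^m_{F_m}$, and then pick $F_0 = \emptyset \subseteq_f \In_\beta$. The family $(F_0,F_1,\dots,F_m)$ is still not all empty, and since $\beta_{F_0}$ is the empty list the concatenated sequence on the right is unchanged; hence $\alpha \leq \beta_{F_0},\beta^1_{F_1},\dots,\beta^m_{F_m}$, which witnesses $\alpha < \beta,\beta^1,\dots,\beta^m$. For contraction of $<$, given witnesses $F_1,\,F_1' \subseteq_f \In_{\beta^1}$ together with $F_2,\dots,F_m$ for $\alpha < \beta^1,\beta^1,\beta^2,\dots,\beta^m$, I will fuse the first two slots by list concatenation into $F_1 \cct F_1' \subseteq_f \In_{\beta^1}$. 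Because $\beta^1_{F_1 \cct F_1'}$ is, by definition of $\beta_F$, the concatenation of $\beta^1_{F_1}$ and $\beta^1_{F_1'}$, the list to the right of $\alpha \leq \cdots$ is syntactically unchanged, the new family $(F_1 \cct F_1', F_2,\dots,F_m)$ is still not all empty, and $\alpha < \beta^1,\beta^2,\dots,\beta^m$ follows at once.

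For the $\leq$-batch, I will simply unfold $\alpha \leq \beta^1,\dots,\beta^m$ to $\alpha_i < \beta^1,\dots,\beta^m$ for every $i \in \In_\alpha$, apply weakening (resp.\ contraction) for $<$ to each $\alpha_i$, and refold. The only subtle point worth spelling out in the writeup is that the notation $\beta^1_{F_1},\dots,\beta^m_{F_m}$ on the right of $\alpha \leq \cdot$ denotes the concatenation of the sublists $\beta^k_{F_k}$; prepending an empty sublist, or merging two sublists of $\In_{\beta^1}$, leaves that concatenation intact while only relabelling which $\beta^k$ each item is drawn from. This is really the entire content of the proof, and I do not anticipate any genuine obstacle.
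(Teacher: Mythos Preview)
Your argument is correct. The key observation you make---that the relation \(\alpha\leq\gamma^{1},\dots,\gamma^{r}\) depends only on the list \([\gamma^{1},\dots,\gamma^{r}]\) of ordinals on the right, not on how that list has been grouped into blocks coming from \(\beta^{1},\dots,\beta^{m}\)---is exactly what makes weakening and contraction for \(<\) a pure matter of reindexing the witnesses \(F_{k}\). Prepending \(F_{0}=\emptyset\) or concatenating \(F_{1}\cct F_{1}'\) leaves the concatenated list \(\beta^{1}_{F_{1}},\dots,\beta^{m}_{F_{m}}\) literally unchanged, so the inner \(\leq\)-hypothesis carries over verbatim, and the ``not all empty'' condition is preserved. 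The \(\leq\)-versions then follow by one unfolding and one refolding, as you say.

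The paper's proof is the single line ``Use induction applying the definitions.'' Your route differs in that you dispense with induction entirely: you first settle the \(<\)-cases by a non-inductive bookkeeping argument on the witness lists, and only then derive the \(\leq\)-cases from them in one step. The paper presumably has in mind a simultaneous induction on \(\alpha\) treating the two relations together, but once one looks at what the inductive step for \(<\) would actually say, it collapses to exactly your witness-reshuffling, with no genuine appeal to the inductive hypothesis. So your approach is not merely a stylistic variant: it makes explicit that these structural rules are immediate consequences of the shape of the definition, whereas the paper's phrasing obscures this.
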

\begin{proof}
Use induction applying the definitions.
\end{proof}

The following lemma is a  corollary of \cref{factOr1}.  Item~\ref{corfactOr1-succz} (resp.~\ref{corfactOr1-supz}) 
will imply that the \(\suc\) (resp.~\(\sup\))
map descends to the quotient in \(\Ord\) (resp.\ 
\(\Ord\sta\)).
Item~\ref{corfactOr1-rfl} will imply that the relations \(\leq \) and~\(=\) are reflexive in \(\Ord\); Items~\ref{corfactOr1-irfl} and~\ref{corfactOr1-ax14} will imply \cref{ax3,ax14} for \(\Ord\).

\begin{lemma} \label{corfactOr1}
\begin{enumerate}[label=\textit{\arabic*.},ref=\textit{\arabic*}]
%
\item\label{corfactOr1-succz} \succz. Let \(\alpha,\beta\in\ord\) with \(\In_\alpha=\In_\beta\) and \(\alpha_i\leq \beta_i\) for all \(i\in \In_\alpha\). Then \(\alpha\leq \beta\).

\item\label{corfactOr1-supz} \supz. Let \(\alpha,\beta\in\ord\sta\) with \(\In_\alpha=\In_\beta\) and \(\alpha_i\leq \beta_i\) for all \(i\in \In_\alpha\). Then
\[\sup(\alpha_i)_{i\in\In_\alpha}\leq \sup(\beta_i)_{i\in\In_\beta}\text.\] 
The result works also for the \(\sup\) of a finite family in \(\ord\).
\item\label{corfactOr1-rfl} \rfl. For all \(\alpha\in\ord\), we have \(\alpha\leq \alpha \).
A fortiori, \(\alpha\leq \alpha,\beta^1,\dots,\beta^m\).
\item\label{corfactOr1-succu} \succu. For all \(\alpha\in\ord\sta\) and all \(i\in \In_\alpha\), we have \(\alpha_i<\alpha\). A fortiori, \(\alpha_i< \alpha,\beta^1,\dots,\beta^m\).
\item\label{corfactOr1-irfl} \irfl. For all \(\alpha\in\ord\), \(\alpha<\alpha\) is impossible.
\item \label{corfactOr1-37} \(\alpha < \suc(\alpha)\).
\item\label{corfactOr1-ax14} {\tt ax14}.  If \(\gamma<\beta\) for all \(\gamma<\alpha\), then \(\alpha\leq \beta\).
\end{enumerate}
\end{lemma}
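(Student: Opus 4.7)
The plan is to handle the seven items in order, observing that items 1--4, 6, 7 are essentially immediate while item 5 (irfl) is the only delicate step.

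For succz (item 1), the hypothesis \(\alpha_i\leq\beta_i\) is, after invoking \cref{factOr1}\,b with the singleton \(F_i=\{i\}\subseteq\In_\beta\), exactly \(\alpha_i<\beta\) for every \(i\in\In_\alpha=\In_\beta\); collecting these over \(i\) gives the definition of \(\alpha\leq\beta\). For supz (item 2), I would write the two \(\sup\)'s according to \cref{defsupord}, so that their definitional subordinals are the \((\alpha_i)_k\) and \((\beta_i)_k\) transported through the injections of the disjoint union; for each \((\alpha_i)_k\) the hypothesis \(\alpha_i\leq\beta_i\) yields a nonempty \(F\subseteq\In_{\beta_i}\) witnessing \((\alpha_i)_k\leq(\beta_i)_F\), and its image \(\iota_i(F)\) in \(\In_{\sup(\beta_j)}\) witnesses \((\alpha_i)_k<\sup(\beta_j)_{j\in\In_\beta}\). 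For rfl (item 3), induct on \(\alpha\): the base case is vacuous since \(\In_{\und 0}=\emptyset\), and the step applies item 1 to \(\alpha_i\leq\alpha_i\). Succu (item 4) is \cref{factOr1}\,b with \(F=\{i\}\) combined with rfl. Item 6 is succu applied at the unique definitional subordinal index of \(\suc(\alpha)\). For ax14 (item 7), succu gives \(\alpha_i<\alpha\) for every \(i\in\In_\alpha\), so the hypothesis yields \(\alpha_i<\beta\), which is the defining condition of \(\alpha\leq\beta\).

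Item 5 (irfl) is the main obstacle; I would prove it by well-founded induction on \(\alpha\) along \(\lessdot\) (\cref{factsousord}). The base case \(\und 0<\und 0\) is ruled out because \(\In_{\und 0}=\emptyset\) admits no nonempty subset. For the step, suppose \(\alpha<\alpha\) with \(\alpha\in\ord\sta\); by definition, there is a nonempty \(F\subseteq\In_\alpha\) with \(\alpha\leq\alpha_F\). In particular, for any \(f\in F\) one has \(\alpha_f<\alpha_F\), which unfolds to the existence of a nonempty selection \(G_1\subseteq\In_{\alpha_{f_1}},\dots,G_r\subseteq\In_{\alpha_{f_r}}\), not all empty, with \(\alpha_f\leq(\alpha_{f_1})_{G_1},\dots,(\alpha_{f_r})_{G_r}\). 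Iterating the alternating descent prescribed by the two defining clauses---the \(\leq\)-clause descends the left-hand ordinal via \(\lessdot\), the \(<\)-clause descends the right-hand list via \(\lessdot\)---produces a derivation tree that is finite by \cref{factsousord}, yet whose leaves must include a sub-goal of the form \(\beta<[\und 0,\dots,\und 0]\), impossible since it would require a nonempty subset of \(\In_{\und 0}=\emptyset\).

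The hardest part is formulating the inductive descent for irfl rigorously, since the naive inductive hypothesis ``\(\gamma<\gamma\) impossible'' does not apply directly to the intermediate sub-goal \(\alpha_f\leq L\): the list \(L\) mixes definitional subordinals of \(\alpha_f\) with those of the sibling \(\alpha_{f_j}\)'s. A natural way to close this gap is to strengthen the statement for the induction---for instance, proving by the same induction that \(\beta\leq L\) is impossible whenever each element of \(L\) is bounded above by some definitional subordinal of \(\beta\)---and then specialise to \(\beta=\alpha\) and \(L=\alpha_F\). This strengthening keeps every descent step within the scope of the inductive hypothesis and funnels all branches toward the contradiction already identified at \(\und 0\).
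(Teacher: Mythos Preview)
Your handling of items~1--4, 6, 7 is essentially the paper's (in item~1 you want \cref{factOr1}\,\emph{a}, not~\emph{b}: the singletons \(F_i=\{i\}\) plug directly into the characterisation of \(\alpha\leq\beta\)). For \irfl\ the paper writes a single line in the same mould as items~3--4 (``by induction, via \cref{factOr1}\,\emph{b} with \(F=\{i\}\), reduce to `\(\alpha_i<\alpha_i\) impossible'\,''), and you are right to be uneasy with that reduction: unfolding \(\alpha<\alpha\) only yields some nonempty \(F\subseteq_f\In_\alpha\) with \(\alpha_i<\alpha_F\) for all~\(i\), which is \emph{not} the inductive hypothesis \(\alpha_i<\alpha_i\) when \(F\) is larger than a singleton. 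The paper's genuine argument appears later as \cref{lemContradic}: for any list \(\alpha^1,\dots,\alpha^n\), it is impossible that \(\alpha^i<\alpha^1,\dots,\alpha^n\) for every~\(i\); the case \(n=1\) is \irfl.

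Your own proposed strengthening, however, does not close the gap. You suggest proving by induction that ``\(\beta\leq L\) is impossible whenever every member of \(L\) is bounded above by some definitional subordinal of \(\beta\)''. Unfold one step: \(\beta\leq L\) gives \(\beta_j<L\) for each \(j\in\In_\beta\), hence \(\beta_j\leq L'\) where the members of \(L'\) are elements \(\gamma_g\) with \(\gamma\in L\). From \(\gamma\leq\beta_i\) you get \(\gamma_g<\beta_i\), so \(\gamma_g\) is bounded by subordinals of the \emph{sibling} \(\beta_i\), not of \(\beta_j\); the hypothesis on \(L'\) relative to \(\beta_j\) fails and the induction stalls. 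The strengthening that works is the symmetric one of \cref{lemContradic}: carry a finite list that plays the rôle of both the left-hand elements and the right-hand context, so that the descent step---replacing \(\alpha^1,\dots,\alpha^n\) wholesale by the list of all \(\alpha^k_\ell\) with \(\ell\in F_k\)---preserves the shape of the hypothesis exactly.
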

\begin{proof}

\noindent \ref{corfactOr1-succz}. Straightforward from \cref{factOr1}\emph{a}.
We take \(F=\so{i}\).

\noindent
\ref{corfactOr1-supz}. Let \(\gamma=\sup(\alpha_i)_{i\in\In_\alpha}\)
and \(\epsilon=\sup(\beta_i)_{i\in\In_\beta}\). By \cref{factOr1}\emph{a}, for each~\(j\in\In_{\alpha_i}\) there exists a nonempty  \(F_{i,j}\subseteq_f \In_{\beta_i}\) with \((\alpha_i)_j\leq (\beta_i)_{F_{i,j}}\); \(F_{i,j}\) is a fortiori in the disjoint union of the \(\In_{\beta_i}\)'s, so that \((\alpha_i)_j<\epsilon\) by definition of~\(\epsilon\). By definition, \(\alpha_i\leq\epsilon\), so that by \cref{factsup} \(\gamma\leq\epsilon\).

\noindent
\ref{corfactOr1-rfl}. By induction: we use \cref{factOr1}\emph{a}, we take \(F=\so{i}\) and \(\alpha\leq\alpha\) reduces to \(\alpha_i\leq \alpha_i\).

\noindent
\ref{corfactOr1-succu}. By induction: we use \cref{factOr1}\emph{b}, we take \(F=\so{i}\) and \(\alpha_i<\alpha\) reduces to \((\alpha_i)_j< \alpha_i\).

\noindent
\ref{corfactOr1-irfl}. By induction: we use \cref{factOr1}\emph{b}, we take \(F=\so{i}\) and \gui{\(\alpha< \alpha\) is impossible} reduces to: \gui{\(\alpha_i< \alpha_i\) is impossible}.

\noindent
\ref{corfactOr1-37}.
Apply  \succu\ to \(\beta=\suc(\alpha)\).

\noindent
\ref{corfactOr1-ax14}.
If \(\alpha=\und0\), the conclusion is clear. If \(\alpha=\s{\alpha_i}{i\in I}\), as \(\alpha_i<\alpha\) for each \(i\in\In_\alpha\) (Item~\ref{corfactOr1-succu}), the hypothesis that \(\gamma<\beta\) for all \(\gamma<\alpha\) shows that \(\alpha_i<\beta\) for all \(i\in \In_\alpha\).
We conclude by~\cref{factsucc} that \(\alpha\leq \beta\).
\end{proof}

\subsection{Ordinals and limited principles of omniscience}
\label{sec:princ-omnisc}

\begin{example} \label{exaLLPO}
Let \((v_n)_{n\in \NN}\) be a sequence in \(\so{0,1}\) which takes at most once the value \(1\). The lesser limited principle of omniscience \LLPO\ says that
we have 
\[
  \exists k\in \so{0,1}\ \forall n\ \ (v_n=1 \Rightarrow \;n\equiv k\mod 2).\eqno(*)
\] 
From such a sequence \((v_n)_{n\in \NN}\) let us define \(\varepsilon\), \(\varepsilon^1\) and \(\varepsilon^2\in\ord\) in the following way:
\[
  \varepsilon=\s{\und{v_n}}{n\in\NN},\quad\varepsilon^1=\s{\und{v_{2m}}}{m\in\NN},\quad\varepsilon^2=\s{\und{v_{2m+1}}}{m\in\NN}.
\] 
Then we have \(\varepsilon\leq \sup(\varepsilon^1,\varepsilon^2)\). But 
\(\varepsilon\leq\varepsilon^1\) gives \(k=0\) in \((*)\) and \(\varepsilon\leq\varepsilon^2\) gives \(k=1\) in \((*)\). Thus, the disjunction \(\varepsilon\leq\varepsilon^1\) or \(\varepsilon\leq\varepsilon^2\) has no constructive proof: assuming the disjunction for an arbitrary \((v_n)\) would imply \LLPO.
\end{example}

\begin{example} \label{exa123}
Let \((u_n)_{n\in \NN}\) be a nondecreasing sequence in \(\so{0,1}\). The limited principle of omniscience \LPO\ says that such a sequence is eventually constant: 
\[
  \exists n\in\NN\ \forall m\in\NN\ u_m\leq u_n. \eqno(*)
\]   
From such a sequence \((u_n)_{n\in \NN}\) let us define \(\alpha\) and \(\beta\in\ord\) in the following way: 
\[
  \alpha=\s{\und{u_n}}{n\in\NN},\qquad    \beta=\s{\und{u_n+1}}{n\in\NN}.
\]  
We note that the strict inequality \(\alpha<\beta\) is equivalent (using \cref{factnatord,factOr1,lemsupetlist}) to
\[
  \exists n\in\NN\ \forall m\in\NN\ u_m< u_n+1,
\] 
which amounts to \((*)\).
In fact, \(\alpha\) hesitates between \(1\) and \(2\), \(\beta\) hesitates between \(2\) and~\(3\), and the inequality \(\alpha<\beta\)
is valid if we assume \LPO\@. But asserting \(\alpha<\beta\) for all sequences \((u_n)\) implies  \LPO\ in constructive mathematics.
Here we see that hesitating between \(1\) and \(2\) for an infinite sequence has the same flavour as
hesitating (in a classical setting) between bounded and unbounded for an infinite sequence of natural numbers: adding \(1\) to each term of the sequence increases strictly the \(\sup\) only if the sequence is bounded.   
\end{example}

\subsection{In classical mathematics}\label{in-class-math}

\Cref{propOrdclass} shows that the law of excluded middle (\LEM) simplifies and/or obscures dramatically the structure of the set \(\ord_\fF\) with respect to the relations~\(<\) and~\(\leq\). 
\begin{proposition} \label{propOrdclass}
Assume \LEM\@. Then for \(\alpha,\beta\in\ord\), we have  \(\alpha\leq \beta\) or \(\beta<\alpha\). Moreover, if \(\beta<\alpha\), there exists an \(i\in \In_\alpha\) such that \(\beta\leq \alpha_i\).
\end{proposition}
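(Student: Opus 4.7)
The plan is to prove the whole proposition—disjunction plus \emph{moreover}—by $\lessdot$-induction on $\alpha$, with $\beta$ quantified universally inside the statement. Given $\alpha,\beta\in\ord$, I apply \LEM to the proposition \gui{$\alpha_i<\beta$ for every $i\in\In_\alpha$}. If it holds, \cref{factsucc} immediately yields $\alpha\leq\beta$ and the first disjunct is secured; this also subsumes the base case $\alpha=\und0$ vacuously, since $\In_{\und0}=\NN_0$.

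If it fails, pick $i^*\in\In_\alpha$ with $\neg(\alpha_{i^*}<\beta)$; the aim is then to prove $\beta\leq\alpha_{i^*}$. Once achieved, $\{i^*\}\subseteq_f\In_\alpha$ is a nonempty witness showing $\beta<\alpha$ by the definition of $<$, and is simultaneously the index required by the \emph{moreover} clause. By \cref{factsucc} it suffices to establish $\beta_k<\alpha_{i^*}$ for each $k\in\In_\beta$. Fix such a $k$ and invoke the inductive hypothesis on the pair $(\alpha_{i^*},\beta_k)$, which is legal because $\alpha_{i^*}\lessdot\alpha$: it returns $\alpha_{i^*}\leq\beta_k$ or $\beta_k<\alpha_{i^*}$. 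The first alternative is excluded by the choice of $i^*$: since $\{k\}\subseteq_f\In_\beta$ is nonempty, $\alpha_{i^*}\leq\beta_{\{k\}}$ unfolds, by the very definition of $<$, to $\alpha_{i^*}<\beta$, a contradiction. Hence $\beta_k<\alpha_{i^*}$, as required.

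The main obstacle is structural: one has to notice that the single \LEM move on $\In_\alpha$ produces an $i^*$ whose property $\neg(\alpha_{i^*}<\beta)$ is strong enough to veto, via the singleton witness $F=\{k\}$, the unwanted alternative $\alpha_{i^*}\leq\beta_k$ in the inductive application. Consequently no auxiliary appeal to \cref{axltle} (\gui{$<$ implies $\leq$}) or to transitivity is needed—these would otherwise require their own $\lessdot$-induction—and the \emph{moreover} clause is furnished automatically by the same index $i^*$.
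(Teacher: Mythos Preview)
Your proof is correct but organises the induction differently from the paper. The paper argues by a simultaneous induction on the pair, proving the two symmetric disjunctions ``$\alpha\leq\beta$ or $\beta<\alpha$'' and ``$\beta\leq\alpha$ or $\alpha<\beta$'' together; the inductive hypothesis ``$\beta\leq\alpha_i$ or $\alpha_i<\beta$'' is then available one level down on the $\alpha$ side, and a single \LEM move (either every $\alpha_i<\beta$, or some $\beta\leq\alpha_i$) already delivers the desired disjunct and the singleton witness~$i$. You instead run a single $\lessdot$-induction on~$\alpha$ with $\beta$ universally quantified; this obliges you to descend once more and invoke the hypothesis at $(\alpha_{i^*},\beta_k)$, eliminating the unwanted alternative $\alpha_{i^*}\leq\beta_k$ by unfolding it to $\alpha_{i^*}<\beta$. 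Your route trades the paired induction for a slightly longer inductive step and makes the \emph{moreover} witness $i^*$ explicit from the outset; the paper's route is more symmetric and needs no secondary case elimination. Both arguments avoid transitivity and \cref{axltle}, as you observe.
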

\begin{proof}
We prove by simultaneous induction the two following properties. 

\smallskip \centerline{\gui{\(\alpha\leq \beta\) or \(\beta<\alpha\)}\quad  and \quad \gui{\(\beta\leq \alpha\) or \(\alpha<\beta\)}.}

\smallskip \noindent By induction hypothesis, we have  for all \(i\in \In_\alpha\) and all \(j\in \In_\beta\),
\gui{\(\alpha\leq \beta_j\) or \(\beta_j<\alpha\)}, and also  \gui{\(\beta\leq \alpha_i\) or \(\alpha_i<\beta\)}. 
\\
The first disjunction implies by \LEM\ that either \(\beta_j<\alpha\) for all \(j\in \In_\beta\) or there is \(j\in \In_\beta\) such that \(\alpha\leq\beta_j \). In the first case, we have  \(\beta\leq \alpha\) by definition of \(\cdot\leq \cdots\). In the second case, we have  \(\alpha<\beta\) by definition of \(\cdot< \cdots\), with for \(F\subseteq_f \In_\beta\) the list \([j]\).
\\
The symmetric reasoning yields the second disjunction.
\end{proof}
\noindent N.B.: For countable ordinals, the limited principle of omniscience (\LPO) suffices to prove the proposition.

\begin{corollary} \label{cor1propOrdclass}
Assume \LEM\@. Any ordinal \(\alpha\ne\und0\) is either an immediate successor or the \(\sup\) of the ordinals \(\gamma<\alpha\).
\end{corollary}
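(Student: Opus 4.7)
Since $\alpha\neq\und0$, write $\alpha=\s{\alpha_i}{i\in\In_\alpha}$ with $(\alpha_i)_{i\in\In_\alpha}\in\Fam(\fF,\ord)$. Apply \LEM to the statement \emph{there exists $i_0\in\In_\alpha$ such that $\alpha_j\leq\alpha_{i_0}$ for every $j\in\In_\alpha$} in order to split into a \emph{successor} and a \emph{limit} case.

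In the successor case, suppose such an $i_0$ exists. By \cref{factsuciso}, $\suc(\alpha_j)\leq\suc(\alpha_{i_0})$ for each $j$, so the characteristic property of $\sup$ (\cref{axsup}, applied through the definition of $\s{\cdot}{}$ in \cref{factsupsucinfinis}) yields $\alpha\leq\suc(\alpha_{i_0})$. Conversely, \cref{corfactOr1}\ref{corfactOr1-succu} gives $\alpha_{i_0}<\alpha$, and then \cref{axsuccunaire2} gives $\suc(\alpha_{i_0})\leq\alpha$. Hence $\alpha=_\Ord\suc(\alpha_{i_0})$, so $\alpha$ is an immediate successor in $\Ord$.

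In the limit case, the negation of the above statement, together with \cref{propOrdclass}, says that for every $i\in\In_\alpha$ there is $j\in\In_\alpha$ with $\alpha_i<\alpha_j$ (apply \LEM, then convert the refutation of $\alpha_j\leq\alpha_{i}$ into $\alpha_i<\alpha_j$ via the proposition). We then verify that $\alpha$ is the least upper bound of $\{\gamma\in\ord:\gamma<\alpha\}$. Upper bound: $\gamma\leq\alpha$ for every $\gamma<\alpha$ by \cref{axltle}. Least: if $\beta\in\ord$ satisfies $\gamma\leq\beta$ for every $\gamma<\alpha$, we use \cref{factsucc} and must show $\alpha_i<\beta$ for all $i\in\In_\alpha$. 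For a given $i$ we pick $j$ with $\alpha_i<\alpha_j$; since $\alpha_j<\alpha$ by \cref{corfactOr1}\ref{corfactOr1-succu}, the hypothesis gives $\alpha_j\leq\beta$, and \cref{transdeux} yields $\alpha_i<\beta$. Thus $\alpha\leq\beta$, which is what is meant by \gui{$\alpha$ is the $\sup$ of the ordinals $\gamma<\alpha$}.

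The only subtlety is interpretational rather than technical: the collection $\{\gamma:\gamma<\alpha\}$ need not be indexed by an element of $\fF$, so the statement has to be read as the universal property of a least upper bound in $\Ord$ (as stated, not as the existence of a specific $\sup$-expression). Once this is agreed upon, all the work is carried out by \cref{propOrdclass}, \cref{factsucc,factsuciso}, and the transitivities.
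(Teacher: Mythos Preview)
Your proof is correct and close in spirit to the paper's, which merely says ``compare \(\alpha\) with \(\sup(\alpha_i)_{i\in\In_\alpha}\)'' and leaves the details to the reader. The difference is the initial dichotomy: the paper applies \cref{propOrdclass} to the pair \(\alpha\), \(\sup(\alpha_i)_{i\in\In_\alpha}\), getting either \(\sup(\alpha_i)_i<\alpha\) (whence the ``moreover'' clause produces an \(i_0\) with \(\sup(\alpha_i)_i\leq\alpha_{i_0}\), i.e.\ your successor case) or \(\alpha\leq\sup(\alpha_i)_i\) (whence \(\alpha=_\Ord\sup(\alpha_i)_i\), i.e.\ your limit case). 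You instead apply \LEM\ directly to the existence of a maximal \(\alpha_{i_0}\), which yields the same two cases. Your route has the minor advantage of sidestepping the nuisance that \(\sup\) is officially defined only on \(\Fam(\fF,\ord\sta)\), so one need not worry about \(\alpha_i=\und0\). Your closing remark on reading ``\(\sup\) of the ordinals \(\gamma<\alpha\)'' as a least-upper-bound property (rather than a literal \(\fF\)-indexed \(\sup\)) is well taken and is indeed how the statement must be understood.
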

\begin{proof}
Consider \(\alpha=\s{\alpha_i}{i\in\In_\alpha}\) and compare \(\alpha\) with \(\sup(\alpha_i)_{i\in\In_\alpha}\). The details are left to the reader.
\end{proof}
%

\begin{corollary} \label{corpropOrdclass}
Assume \LEM\@. Any bounded ordinal is finite.
\end{corollary}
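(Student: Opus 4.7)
The plan is to proceed by induction on the integer $m$ witnessing boundedness, using \cref{propOrdclass} as the key classical tool to split into cases at each step.

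First I would unpack the definition: $\alpha$ is bounded means there exists $m \in \NN$ with $\alpha \leq \und m$, and I want to produce $n \in \NN$ with $\alpha =_\Ord \und n$. I would induct on this witness $m$. For the base case $m = 0$, the hypothesis $\alpha \leq \und 0$ together with \cref{zero} (which gives $\und 0 \leq \alpha$) yields $\alpha =_\Ord \und 0$ by antisymmetry, so $\alpha$ is finite.

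For the inductive step, assume that any ordinal bounded by $\und m$ is finite, and suppose $\alpha \leq \und{m+1}$. Applying \cref{propOrdclass} to the pair $(\alpha, \und m)$, we get either $\alpha \leq \und m$ or $\und m < \alpha$. In the first case, the induction hypothesis immediately gives that $\alpha$ is finite. In the second case, \cref{factOr01}\emph{2} (i.e.\ \axsuccunair) converts $\und m < \alpha$ into $\suc(\und m) \leq \alpha$, that is $\und{m+1} \leq \alpha$; combined with the hypothesis $\alpha \leq \und{m+1}$ and antisymmetry, this yields $\alpha =_\Ord \und{m+1}$, so $\alpha$ is again finite.

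I do not expect a serious obstacle here: the whole work is packed into \cref{propOrdclass}, whose use of \LEM provides exactly the dichotomy $\alpha \leq \und m$ or $\und m < \alpha$ needed to drive the induction, and \cref{factOr01} transforms a strict inequality $\und m < \alpha$ into the successor bound $\und{m+1} \leq \alpha$. The only mildly delicate point is being explicit that the equalities throughout the proof are $=_\Ord$ rather than the literal equality on $\ord$, but this is precisely the quotient convention adopted after \cref{thOrd1}.
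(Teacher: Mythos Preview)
Your proof is correct and follows exactly the route the paper has in mind: the paper's own proof is simply ``Left to the reader: use \cref{factOr01}'', and your induction on the bound~\(m\), splitting via \cref{propOrdclass} and converting \(\und m<\alpha\) into \(\und{m+1}\leq\alpha\) with \cref{factOr01}\emph{2}, is precisely the intended argument.
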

\begin{proof}
Left to the reader: use \cref{factOr01}.
\end{proof}
%

\section{Fundamental results}\label{PropfondOrd}

\subsection{\texorpdfstring{\(\Ord_\fF\) is an initial object in the category of  \epts{\fF}}{Ord\_F is an initial object in the category of  F-orders}}
%
\begin{lemma} \label{lemsupsucc} For \(\alpha^1,\dots,\alpha^r\) in \(\ord\) (\(r\geq 1\)), we have \[\fbox{\(\sup(\alpha^j)_{j\in\lrbr}<\allowbreak\s{\alpha^j}{j\in\lrbr}\).}\] 
\end{lemma}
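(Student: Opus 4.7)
The plan is to unfold the definitions and reduce the target strict inequality to an instance of reflexivity via \cref{lemsupetlist}.

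First I would name the right-hand side. By the construction of \(\suc\) on a finite list given just after \cref{remdeford}, \(\gamma \eqdefi \s{\alpha^j}{j\in\lrbr} = \suc(\alpha^1,\dots,\alpha^r)\) is the element of \(\ord\sta\) with \(\In_\gamma = \lrbr\) and \(\gamma_j = \alpha^j\) for every \(j\in\lrbr\). The goal \(\sup(\alpha^j)_{j\in\lrbr} < \gamma\) is a single-element (\(m=1\)) instance of the relation \(<\), which by definition unfolds to the existence of a nonempty \(F\subseteq_f \In_\gamma = \lrbr\) with \(\sup(\alpha^j)_{j\in\lrbr} \leq \gamma_F\).

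The natural witness is \(F = \lrbr\) itself, which is nonempty since \(r\geq 1\). Then \(\gamma_F\) is literally the list \(\alpha^1,\dots,\alpha^r\), so the remaining task is to show
\[
\sup(\alpha^j)_{j\in\lrbr} \leq \alpha^1,\dots,\alpha^r.
\]
By \cref{lemsupetlist}, this list-inequality is equivalent to \(\sup(\alpha^j)_{j\in\lrbr}\leq \sup(\alpha^1,\dots,\alpha^r)\), i.e.\ an inequality between the very same element of \(\ord\) and itself; this is reflexivity, \cref{corfactOr1}\ref{corfactOr1-rfl}.

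There is essentially no obstacle here; everything is definitional bookkeeping. The only small point worth noting is that \cref{lemsupetlist} applies uniformly, without a separate analysis of whether some \(\alpha^j\) equals \(\und 0\): this is exactly what the convention \(\In_{\und 0}=\NN_0\) was introduced to buy, and it is what allows the proof to close in one line rather than branching on \cref{factOr00}.
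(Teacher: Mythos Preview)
Your proof is correct. Both you and the paper choose the same witness \(F=\lrbr\) for the strict inequality, but you finish differently: after reducing to \(\sup(\alpha^j)_{j\in\lrbr}\leq \alpha^1,\dots,\alpha^r\), you invoke \cref{lemsupetlist} to collapse the list to a single \(\sup\) and close by \(\rfl\), whereas the paper unfolds one more layer via \cref{factOr1}\emph{b}, writes out the definitional subordinals \(\epsilon_k\) of \(\sup(\alpha^j)\), and finishes with \(\succu\) plus weakening. Your route is slightly slicker because \cref{lemsupetlist} has already packaged the bookkeeping about disjoint unions of index sets; the paper's route is more self-contained in that it does not rely on that earlier lemma and makes explicit the remark that follows the proof, namely that the argument really uses the list form on the right-hand side.
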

\begin{proof} Let us show e.g.\ that \(\epsilon=\sup(\alpha,\beta)<\gamma=\suc(\alpha,\beta)\). We have \(\In_{\epsilon}=\In_\alpha+\In_\beta\), with \(\epsilon_k=\alpha_i\) if \(\iota_1(i)=k\), and \(\epsilon_k=\beta_j\) if \(\iota_2(j)=k\). We have \(\In_\gamma=\so{1,2}\)
with \(\gamma_1=\alpha\) and \(\gamma_2=\beta\). We apply \cref{factOr1}\emph{b} with \(F=\so{1,2}\). For an arbitrary  \(k\) in \(\In_\epsilon\), we have \(\epsilon_k<\alpha,\beta\) since \(\epsilon_k\) is \(\alpha_i\) or \(\beta_j\) and, by \succu, we have \(\alpha_i<\alpha\) (a fortiori \(\alpha_i<\alpha,\beta\)) and \(\beta_j<\beta\) (a fortiori \(\beta_j<\alpha,\beta\)). 
\end{proof}
Let us note that the preceding proof relies on the fact that the definitions of \(\leq \) and \(<\) have been given with lists on the right-hand side.

\begin{lemma}[transitivities] \label{lemtrans}
\begin{enumerate}
\item \transu. If \(\alpha\leq \beta^1,\dots,\beta^m\) and, for each
\(j\in\lrbm\), \(\beta^j\leq \gamma^1,\dots,\gamma^r\), then  
\(\alpha\leq \gamma^1,\dots,\gamma^r\).
\item \transd. If \(\alpha< \beta^1,\dots,\beta^m\) and, for each
\(j\in\lrbm\), \(\beta^j\leq \gamma^1,\dots,\gamma^r\), then  
\(\alpha< \gamma^1,\dots,\gamma^r\).
\item \transt. If \(\alpha\leq \beta^1,\dots,\beta^m\) and, for each
\(j\in\lrbm\), \(\beta^j< \gamma^1,\dots,\gamma^r\), then  
\(\alpha< \gamma^1,\dots,\gamma^r\).
\end{enumerate}
\end{lemma}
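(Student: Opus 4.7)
My approach is to prove the three items simultaneously by triple induction on $(\alpha,\bar\beta,\bar\gamma)$, where $\bar\beta=(\beta^1,\dots,\beta^m)$ and $\bar\gamma=(\gamma^1,\dots,\gamma^r)$. The well-foundedness comes from \cref{factsousord}: the definitional-subordinal relation $\lessdot$ on $\ord$ is well-founded, so its multiset extension on lists is well-founded, and the lexicographic product of three such orderings is also well-founded. Triple induction is necessary because the three items exhibit a cyclic dependency — each reduces to the next with a different component strictly smaller — so no single-variable induction suffices.

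The reduction in each case is forced by the definitions of $\leq$ and $<$. For \transu, unfolding $\alpha\leq\bar\beta$ yields $\alpha_i<\bar\beta$ for every $i\in\In_\alpha$; applying \transd inductively to $(\alpha_i,\bar\beta,\bar\gamma)$ (with $\alpha_i\lessdot\alpha$) gives $\alpha_i<\bar\gamma$, which the definition of $\leq$ folds back into $\alpha\leq\bar\gamma$. For \transd, unfolding $\alpha<\bar\beta$ yields $\alpha\leq\beta^1_{F_1},\dots,\beta^m_{F_m}$ for some $F_j\subseteq_f\In_{\beta^j}$ not all empty, while the hypotheses $\beta^j\leq\bar\gamma$ unfold to $\beta^j_k<\bar\gamma$ for every $k\in\In_{\beta^j}$, in particular for each entry of the list $\beta^1_{F_1},\dots,\beta^m_{F_m}$; \transt applied inductively (the middle list is strictly smaller in the multiset order) delivers $\alpha<\bar\gamma$. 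For \transt, unfold each $\beta^j<\bar\gamma$ as $\beta^j\leq\gamma^1_{G^j_1},\dots,\gamma^r_{G^j_r}$ with $G^j_l\subseteq_f\In_{\gamma^l}$ not all empty. The key bookkeeping step is to form the union $H^l=\bigcup_{j=1}^{m}G^j_l$, verify that the $H^l$'s are still not all empty (since some $G^{j_0}_{l_0}$ is nonempty), and apply weakening (\cref{factOr0}) to obtain $\beta^j\leq\gamma^1_{H^1},\dots,\gamma^r_{H^r}$ uniformly in $j$. Then \transu applied inductively (the $\bar\gamma$-entries have been replaced by their subordinals) gives $\alpha\leq\gamma^1_{H^1},\dots,\gamma^r_{H^r}$, which by the definition of $<$ is exactly $\alpha<\bar\gamma$.

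The main obstacle is not any individual step — each reduction is a direct consequence of the definitions together with weakening — but rather recognising that the three items must be proved jointly and selecting a well-founded ordering that absorbs the cyclic pattern $\transu\to\transd\to\transt\to\transu$. Once each rotation is matched with a strict decrease in one of the three components $(\alpha,\bar\beta,\bar\gamma)$, the structure of the proof is essentially determined. The one nontrivial verification is the combination of the $G^j_l$'s into the $H^l$'s in the \transt case and the preservation of the nonemptiness condition built into the definition of $<$.
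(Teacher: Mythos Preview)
Your proof is correct and follows essentially the same route as the paper's: simultaneous induction with the cyclic pattern \transu$\to$\transd$\to$\transt$\to$\transu, each step decreasing one of the three components. You are more explicit than the paper in two respects: you spell out the well-founded ordering (lexicographic product of the multiset extensions of~$\lessdot$), where the paper simply writes ``simultaneous induction''; and in the \transt\ case you make the union $H^l=\bigcup_j G^j_l$ and the appeal to weakening fully explicit, where the paper compresses this into the phrase ``by weakening''.
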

As particular cases,  \cref{transun,transdeux,transtrois}  will be valid when we shall descend to the quotient \(\Ord\):
\begin{itemize}
\item if \(\alpha\leq \beta\) and \(\beta\leq \gamma\)
then \(\alpha\leq \gamma\);
\item   if \(\alpha< \beta\) and \(\beta\leq \gamma\)
then \(\alpha< \gamma\);
\item   if \(\alpha\leq \beta\) and \(\beta< \gamma\)
then \(\alpha< \gamma\).
\end{itemize}
\begin{proof} The three transitivities are being proved by simultaneous induction.

  In order to prove \transu, we note that the hypothesis means that we have \(\alpha_i<\beta^1,\dots,\beta^m\) for all \(i\in\In_\alpha\). Let us fix such an \(i\). We use  \transd\ with this \(\alpha_i\) instead of \(\alpha\) and we get \(\alpha_i<\gamma^1,\dots,\gamma^r\). Since this works for all \(i\in\In_\alpha\), this gives the desired conclusion \(\alpha\leq \gamma^1,\dots,\gamma^r\).

In order to prove \transd, we note that the hypothesis implies that there are \(G_j\subseteq_f \In_{\beta^j}\) not all empty such that 
\(\alpha\leq \beta^1_{G_1},\dots,\beta^m_{G_m}\). We have also for \(j\in\lrbm\) and for all
\(h\in\In_{\beta^j}\), \hbox{\(\beta^j_h<\gamma^1,\dots,\gamma^r\)}.
A fortiori, this is true for the \(h\)'s \(\in G_j\). We use \transt\ with these \(\beta^j_h\)'s instead of the~\(\beta^j\)'s.
This gives the desired conclusion \(\alpha< \gamma^1,\dots,\gamma^r\).  

In order to prove \transt, we note that the hypothesis implies
(by weakening) that there are \(F_k\subseteq_f \In_{\gamma^k}\) not all empty such that 
\(\beta^j\leq \gamma^1_{F_1},\dots,\gamma^r_{F_r}\) for \(j\in\lrbm\).
This time we use \transu\ with the \(\gamma^k_\ell\)'s instead of the~\(\gamma^k\)'s and we deduce that \(\alpha\leq \gamma^1_{F_1},\dots,\gamma^r_{F_r}\), which implies \(\alpha< \gamma^1,\dots,\gamma^r\).
\end{proof}

The following lemma shows that when descending to the quotient, 
  \cref{axltle} will be valid in \(\Ord\).

\hum{Étonnant que ce ne soit pas arrivé bien plus vite !}
\begin{lemma}[\axquattre] \label{lemltle}
  Let \(\alpha,\beta^1,\dots,\beta^m\in\ord\). If \(\alpha<\beta^1,\dots,\beta^m\), then \(\alpha\leq \beta^1,\dots,\beta^m\).
\end{lemma}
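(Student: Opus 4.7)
The plan is to argue by well-founded induction on \(\alpha\), exploiting the fact that the definitions of \(\leq\) and \(<\) are intertwined in such a way that one step of unfolding essentially reduces the problem to the same statement on the definitional subordinals of \(\alpha\). Thanks to the convention \(\In_{\und0}=\NN_0\), no separate base case has to be written out: when \(\In_\alpha\) is empty, the conclusion \(\alpha\leq \beta^1,\dots,\beta^m\) holds vacuously.

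Assume \(\alpha<\beta^1,\dots,\beta^m\). By the very definition of this relation, there exist \(F_1\subseteq_f\In_{\beta^1},\dots,F_m\subseteq_f\In_{\beta^m}\), not all empty, such that
\[\alpha\leq \beta^1_{F_1},\dots,\beta^m_{F_m}.\]
Fix \(i\in\In_\alpha\). Unfolding the definition of \(\leq\) in the preceding inequality yields \(\alpha_i<\beta^1_{F_1},\dots,\beta^m_{F_m}\). Now \(\alpha_i\lessdot_1\alpha\), so the induction hypothesis applies to \(\alpha_i\) with the list \(\beta^1_{F_1},\dots,\beta^m_{F_m}\) on the right, giving
\[\alpha_i\leq \beta^1_{F_1},\dots,\beta^m_{F_m}.\]
But this is, by definition of \(<\) (with the very same not-all-empty tuple \(F_1,\dots,F_m\)), exactly the statement \(\alpha_i<\beta^1,\dots,\beta^m\). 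Since this holds for every \(i\in\In_\alpha\), the definition of \(\leq\) yields \(\alpha\leq\beta^1,\dots,\beta^m\), as desired.

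I do not foresee a genuine obstacle here: the whole argument is essentially a single unfolding of the mutual definitions, with the well-foundedness of the subordinal relation (\cref{factsousord}) legitimising the appeal to the induction hypothesis on each \(\alpha_i\). The only point that deserves to be highlighted is that the \emph{same} tuple \(F_1,\dots,F_m\) witnesses both the hypothesis \(\alpha<\beta^1,\dots,\beta^m\) and the conclusion \(\alpha_i<\beta^1,\dots,\beta^m\); this is what makes the induction close cleanly without any detour through the transitivity lemmas or \cref{lemsupsucc}.
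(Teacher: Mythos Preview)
Your argument is correct. Both your proof and the paper's proceed by induction on \(\alpha\), unfold the hypothesis to obtain \(\alpha_i\) related to the list \(\beta^1_{F_1},\dots,\beta^m_{F_m}\), and apply the induction hypothesis to get \(\alpha_i\leq \beta^1_{F_1},\dots,\beta^m_{F_m}\). The one genuine difference lies in the final step: the paper passes from \(\alpha_i\leq \beta^1_{F_1},\dots,\beta^m_{F_m}\) to \(\alpha_i<\beta^1,\dots,\beta^m\) by invoking \succu\ (each \(\beta^k_j<\beta^k\)), weakening, and then \transt, whereas you simply observe that \(\alpha_i\leq \beta^1_{F_1},\dots,\beta^m_{F_m}\) together with the not-all-empty tuple \((F_1,\dots,F_m)\) is, verbatim, the defining witness for \(\alpha_i<\beta^1,\dots,\beta^m\). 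Your route is the more economical one: it closes the induction using nothing beyond the mutual definitions and \cref{factsousord}, so the lemma becomes logically independent of \cref{lemtrans} and of \succu. The paper's route, by contrast, makes the dependence on the transitivity package explicit, which is perhaps pedagogically useful but not necessary.
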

\begin{proof} Proof by induction on \(\alpha\).
We have \(\alpha < \beta^1,\dots,\beta^m\) if and only if we can find \(F_k\subseteq_f \In_{\beta^k}\) not all empty such that, for each \(i\in \In_\alpha\), we have  \(\alpha_i\leq  \beta^1_{F_1},\dots, \beta^m_{F_m}\). 
Let us fix an \(i\in\In_\alpha\). For \(j\in F_k\), we have  \(\beta^k_j<\beta^k\), and by weakening \(\beta^k_j<\beta^1,\dots,\beta^m\). By \transt,  we get \(\alpha_i<\beta^1,\dots,\beta^m\). Finally, since this is true for all \hbox{\(i\in\In_\alpha\)}, we have  \(\alpha\leq \beta^1,\dots,\beta^m\).
\end{proof}

The following fact shows that \cref{supsucfini} will be valid when we shall descend to the quotient \(\Ord\).  
\begin{lemma}[\axsupsucfini] \label{lemaxsupsucfini}
  If \(\alpha<\gamma\) and \(\beta<\gamma\), then \(\sup(\alpha,\beta)<\gamma\).
\end{lemma}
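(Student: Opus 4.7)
My plan is to unfold the list definition of the strict inequality on both sides of the hypotheses, merge the witnessing finite sets, and then recombine via the characteristic property of \(\sup\).

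More precisely, the hypothesis \(\alpha < \gamma\) (a list of length one on the right) means by definition that there is a nonempty \(F\subseteq_f \In_\gamma\) with \(\alpha\leq \gamma_F\). Similarly, from \(\beta < \gamma\) I obtain a nonempty \(G\subseteq_f \In_\gamma\) with \(\beta\leq \gamma_G\). (Note that the existence of any nonempty \(F\) already forces \(\gamma\in\ord\sta\), which is the only possibly delicate case distinction.) I would then form \(H=F\cup G\), which is again a nonempty finitely enumerated subset of \(\In_\gamma\). Repeated application of \wkn\ (Fact~\ref{factOr0}) to \(\alpha\leq\gamma_F\) and \(\beta\leq\gamma_G\) yields
\[
\alpha\leq \gamma_H\quad\text{and}\quad \beta\leq \gamma_H.
\]

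Next I translate each of these list inequalities into an ordinary \(\leq\)-inequality via Lemma~\ref{lemsupetlist}:
\[
\alpha\leq \sup(\gamma_h)_{h\in H}\quad\text{and}\quad \beta\leq \sup(\gamma_h)_{h\in H}.
\]
Applying the finite version of the characteristic property of \(\sup\) (Fact~\ref{factsup}, together with the N.b.\ on finite families), I obtain
\[
\sup(\alpha,\beta)\leq \sup(\gamma_h)_{h\in H},
\]
and converting back through Lemma~\ref{lemsupetlist} this reads \(\sup(\alpha,\beta)\leq \gamma_H\). Since \(H\) is a nonempty finitely enumerated subset of \(\In_\gamma\), this is exactly the witnessing data for \(\sup(\alpha,\beta)<\gamma\) according to the inductive definition.

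I do not foresee a real obstacle here: the argument is essentially bookkeeping between the list notation and the \(\sup\) notation, using that \(\In_\gamma\) is stable under taking unions of finitely enumerated subsets. The only point that would require a moment of care is that the construction does not need a case split on whether \(\alpha\) or \(\beta\) equals \(\underline{0}\), since the convention \(\In_{\underline{0}}=\NN_0\) makes the unfolding of \(\leq\) and \(<\) uniform, and since \(\alpha<\gamma\) already guarantees \(\gamma\in\ord\sta\) so that the merged witness \(H\) can indeed be extracted from \(\In_\gamma\).
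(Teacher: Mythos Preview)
Your argument is correct, but it proceeds quite differently from the paper. You unfold the definition of \(<\) on each hypothesis to extract finite witnesses \(F,G\subseteq_f\In_\gamma\), merge them into \(H=F\cup G\), and then use weakening together with the characteristic property of \(\sup\) (via Lemma~\ref{lemsupetlist} and Fact~\ref{factsup}) to produce the single witness \(H\) for \(\sup(\alpha,\beta)<\gamma\). This is a perfectly valid, hands-on manipulation of the list calculus.

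The paper instead observes that \(\alpha<\gamma\) and \(\beta<\gamma\) is \emph{literally} the meaning of \(\suc(\alpha,\beta)\leq\gamma\) (this is \sucdef, Fact~\ref{factsucc}), invokes Lemma~\ref{lemsupsucc} to get \(\sup(\alpha,\beta)<\suc(\alpha,\beta)\), and concludes by \transd. The paper's route is shorter and more conceptual: it isolates the inequality \(\sup<\suc\) for finite families as the reusable core, and then the present lemma is a two-line corollary of transitivity. Your route, by contrast, stays closer to the raw inductive definitions and avoids both Lemma~\ref{lemsupsucc} and the transitivity lemma, at the cost of a bit more bookkeeping with the list notation.
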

\begin{proof}
By definition, we have  \(\suc(\alpha,\beta)\leq \gamma\). \Cref{lemsupsucc}
gives \(\sup(\alpha,\beta)<\suc(\alpha,\beta)\). By transitivity, we get \(\sup(\alpha,\beta)<\gamma\).
\end{proof}
%

\begin{lemma} \label{lemContradic}
 Let \(n\) be a positive integer and \(\alpha^1,\dots,\alpha^n\in\ord\).
It  is impossible that, for each \(i\in\lrbn\), we have  \(\alpha^i < \alpha^1,\dots,\alpha^n\).
\end{lemma}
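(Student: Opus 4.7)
The plan is to derive a contradiction by chaining together the inequalities so as to obtain $\sup(\alpha^1,\dots,\alpha^n)<\sup(\alpha^1,\dots,\alpha^n)$, which is ruled out by irreflexivity (\cref{corfactOr1} item \irfl).

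First I would translate the hypothesis using \cref{lemsupetlist}: the statement $\alpha^i<\alpha^1,\dots,\alpha^n$ is the same as $\alpha^i<\sup(\alpha^1,\dots,\alpha^n)$, so the assumption becomes: for each $i\in\lrbn$, $\alpha^i<\sup(\alpha^1,\dots,\alpha^n)$. Next, observe that the element $\sigma=\s{\alpha^j}{j\in\lrbn}$ has index set $\In_\sigma=\lrbn$ with $\sigma_j=\alpha^j$. Thus by \cref{factsucc} (\sucdef), the inequality $\sigma\leq\sup(\alpha^1,\dots,\alpha^n)$ reduces exactly to the family of inequalities $\alpha^j<\sup(\alpha^1,\dots,\alpha^n)$ for all $j\in\lrbn$, which is what we have. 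Therefore
\[
  \s{\alpha^j}{j\in\lrbn}\;\leq\;\sup(\alpha^1,\dots,\alpha^n)\text.
\]

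On the other hand, \cref{lemsupsucc} gives the reverse strict inequality
\[
  \sup(\alpha^1,\dots,\alpha^n)\;<\;\s{\alpha^j}{j\in\lrbn}\text.
\]
Combining these two via the transitivity \transd\ of \cref{lemtrans} yields $\sup(\alpha^1,\dots,\alpha^n)<\sup(\alpha^1,\dots,\alpha^n)$, which contradicts \irfl.

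No step here looks delicate: the only mildly subtle point is to recognise that the hypothesis on the family of $\alpha^i$'s is already, by definition of $\suc$ on a finite family and by \cref{factsucc}, the tautological unpacking of a single inequality $\s{\alpha^j}{j\in\lrbn}\leq\sup(\alpha^1,\dots,\alpha^n)$. Once this identification is made, \cref{lemsupsucc} immediately closes the loop; all the machinery needed (\cref{lemsupetlist,factsucc,lemsupsucc,lemtrans,corfactOr1}) has already been established.
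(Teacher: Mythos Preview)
Your proof is correct, and it takes a genuinely different route from the paper's own argument. The paper proves the lemma by a direct induction on the subordinal relation: unfolding the hypothesis yields finite lists \(F_1,\dots,F_n\) (not all empty) with \(\alpha^i\leq\alpha^1_{F_1},\dots,\alpha^n_{F_n}\) for each \(i\), and then one observes that the elements \(\alpha^i_j\) (for \(j\in F_i\)) satisfy the same pattern of inequalities with respect to the strictly smaller list \(\alpha^1_{F_1},\dots,\alpha^n_{F_n}\), so the induction closes. This argument is self-contained and does not appeal to \cref{lemsupsucc} or to the transitivity lemma.

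Your argument, by contrast, is an ``algebraic'' one-liner: you repackage the hypothesis as \(\s{\alpha^j}{j\in\lrbn}\leq\sup(\alpha^j)_{j\in\lrbn}\) via \cref{lemsupetlist} and \cref{factsucc}, then combine this with \cref{lemsupsucc} and \transd\ to obtain a violation of \irfl. All the cited facts are established earlier in the paper and none of them depends on \cref{lemContradic}, so there is no circularity. Your approach is shorter and exploits the machinery already in place; the paper's approach is more elementary in that it avoids invoking \cref{lemsupsucc} and \cref{lemtrans}, proceeding instead by a bare induction in the spirit of the other proofs in that section.
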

\begin{proof} By induction. Using weakening, the hypothesis to be proven impossible gives finite lists 
\[
  \text{\(F_1\subseteq_f\In_{\alpha_1}\), \dots, \(F_n\subseteq_f\In_{\alpha_n}\),}
\]  
not all empty, such that
\[
  \alpha^i \leq    \alpha^1_{F_1},\dots,\alpha^n_{F_n} \hbox{  for } i \in\lrbm.
\] 
In particular, for \(j\in F_i\) (if \(F_i\) is nonempty), we have 
\[
  \alpha^i_j <    \alpha^1_{F_1},\dots,\alpha^n_{F_n}.
\] 
This reduces to the hypothesis with the nonempty list \(\alpha^1_{F_1},\dots,\alpha^n_{F_n}\) instead of the list \(\alpha^1,\dots,\alpha^n\).
\end{proof}
%

\begin{lemma} \label{lema<avirguleb} Let \(\alpha^1,\dots,\alpha^n,\beta^1,\dots,\beta^m\in\ord\) \((n,m\geq 1)\).
\begin{enumerate}
\item  If   \(\alpha^i < \alpha^1,\dots,\alpha^n,\beta^1,\dots,\beta^m\) for \(i\in\lrbn\),
then  \(\alpha^i < \beta^1,\dots,\beta^m\) for each \(i\).
\item Let \(F_1\subseteq_f\In_{\alpha_1}\), \dots, \(F_n\subseteq_f\In_{\alpha_n}\). If \(\alpha^i \leq  \alpha^1_{F_1},\dots,\alpha^n_{F_n},\beta^1,\dots,\beta^m\) for \(i\in\lrbn\),
then  \(\alpha^i \leq  \beta^1,\dots,\beta^m\) for each \(i\).
\end{enumerate}
\end{lemma}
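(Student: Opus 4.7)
The plan is to prove item 1 by well-founded induction on the multiset $\{\alpha^1,\dots,\alpha^n\}$ in the multiset order induced by the well-founded relation $\lessdot$ of \cref{factsousord}, and to derive item 2 from item 1 using \succu, \transd, \cref{lemltle}, and \transu.

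For item 1, I first unpack the hypothesis: for each $i$ there are finitely enumerated witnesses $G^i_j\subseteq_f\In_{\alpha^j}$ and $F^i_j\subseteq_f\In_{\beta^j}$, not all empty, with $\alpha^i\leq\alpha^1_{G^i_1},\dots,\alpha^n_{G^i_n},\beta^1_{F^i_1},\dots,\beta^m_{F^i_m}$. Taking unions $G_j=\bigcup_iG^i_j$ and $F_j=\bigcup_iF^i_j$ and applying \wkn\ yields this inequality with common witnesses for every $i$, still not all empty. By \succu, $\alpha^j_\ell<\alpha^j$ for each $\ell\in G_j$, so \transd\ applied to $\alpha^j\leq\alpha^1_{G_1},\dots,\alpha^n_{G_n},\beta^1_{F_1},\dots,\beta^m_{F_m}$ gives $\alpha^j_\ell<\alpha^1_{G_1},\dots,\alpha^n_{G_n},\beta^1_{F_1},\dots,\beta^m_{F_m}$. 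I then split on whether all $F_j$ are empty --- a decidable condition since the $F_j$ are finitely enumerated. If so, some $G_j$ is nonempty; the nonempty list of all $\alpha^j_\ell$ ($\ell\in G_j$, $j\in\lrbn$) satisfies the impossible hypothesis of \cref{lemContradic}, so the conclusion follows from $\bot$. Otherwise, the list $\beta^1_{F_1},\dots,\beta^m_{F_m}$ is nonempty; listing the $\alpha^j_\ell$'s as $\gamma^1,\dots,\gamma^N$, each $\gamma^k<\gamma^1,\dots,\gamma^N,\beta^1_{F_1},\dots,\beta^m_{F_m}$ (if $N=0$ the hypothesis already reads $\alpha^i\leq\beta^1_{F_1},\dots,\beta^m_{F_m}$, finishing this subcase). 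Since the multiset $\{\gamma^1,\dots,\gamma^N\}$ is strictly below $\{\alpha^1,\dots,\alpha^n\}$ in the $\lessdot$-induced multiset order, the inductive hypothesis yields $\gamma^k<\beta^1_{F_1},\dots,\beta^m_{F_m}$; \cref{lemltle} upgrades this to $\leq$, and \transu\ combined with $\alpha^i\leq\gamma^1,\dots,\gamma^N,\beta^1_{F_1},\dots,\beta^m_{F_m}$ yields $\alpha^i\leq\beta^1_{F_1},\dots,\beta^m_{F_m}$, i.e., $\alpha^i<\beta^1,\dots,\beta^m$ by definition.

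For item 2, if $\sum_j|F_j|=0$ the hypothesis is already the conclusion; otherwise \succu\ and \transd\ give $\alpha^j_\ell<\alpha^1_{F_1},\dots,\alpha^n_{F_n},\beta^1,\dots,\beta^m$ for each $\ell\in F_j$, and item 1 applied to these subordinals as the new $\alpha$'s yields $\alpha^j_\ell<\beta^1,\dots,\beta^m$; \cref{lemltle} upgrades this to $\leq$, and \transu\ concludes $\alpha^i\leq\beta^1,\dots,\beta^m$. The main obstacle I expect is the induction setup: justifying that the Dershowitz--Manna multiset order on $\ord$ induced by $\lessdot$ is well-founded constructively, and verifying that the replacement of $\{\alpha^1,\dots,\alpha^n\}$ by $\{\alpha^j_\ell:\ell\in G_j,j\in\lrbn\}$ is always a strict multiset decrease (even when some $G_j$ are empty, in which case the corresponding $\alpha^j$ is simply removed without being replaced).
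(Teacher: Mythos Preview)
Your argument is correct and follows essentially the same route as the paper: pass to common finite witnesses by weakening, descend to the subordinals, apply the inductive hypothesis to this smaller list, and dispose of the degenerate (all-\(\beta\)-witnesses-empty) case via \cref{lemContradic}. The only notable difference is presentational: the paper leaves the induction principle implicit and, in order that the particular element \(\alpha^i_j\) of interest appear among the new ``\(\alpha\)'s'', enlarges the witness set to \(F'_i=F_i\cup\{j\}\) before invoking induction; you instead make the Dershowitz--Manna multiset order on \(\lessdot\) explicit and use \transu\ together with \cref{lemltle} to transport the inductive conclusion from the \(\gamma^k\)'s back to \(\alpha^i\). Your flagged concern about the constructive well-foundedness of the multiset extension of \(\lessdot\) is legitimate but standard, and the paper's own argument tacitly relies on the same principle.
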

\begin{proof} 
\emph{1}. The hypothesis yields finite lists
\[
  \text{\(F_1\subseteq_f\In_{\alpha^1}\), \dots, \(F_n\subseteq_f\In_{\alpha^n}\), \(G_1\subseteq_f\In_{\beta^1}\), \dots, \(G_m\subseteq_f\In_{\beta^m}\),}
\]
not all empty, such that
\[
  \alpha^i \leq    \alpha^1_{F_1},\dots,\alpha^n_{F_n}, \beta^1_{G_1},\dots,\beta^m_{G_m} \hbox{ for } i \in\lrbn.
\eqno(*)
\] 
Thus we have for  \(i\in\lrbn\) and \(j\in\In_{\alpha^i}\) 
\[ 
\begin{array}{ccc} 
\alpha^i_j  <  \alpha^1_{F_1},\dots,\alpha^i_{F_i},\dots,\alpha^n_{F_n}, \beta^1_{G_1},\dots,\beta^m_{G_m}\text.
\end{array}
\]
Let us fix \(i\)~and~\(j\): a fortiori, with \(F'_i=F_i\cup\so j\)
\[ 
\begin{array}{ccc} 
\alpha^i_j  <  \alpha^1_{F_1},\dots,\alpha^i_{F'_i},\dots,\alpha^n_{F_n}, \beta^1_{G_1},\dots,\beta^m_{G_m}.   
\end{array}
\]
We have also by weakening, for \(k\in\lrbn\) and \(\ell\in F_k\)
\[ 
\begin{array}{ccc} 
\alpha^k_\ell  <  \alpha^1_{F_1},\dots,\alpha^i_{F'_i},\dots,\alpha^n_{F_n}, \beta^1_{G_1},\dots,\beta^m_{G_m}.   
\end{array}
\]
Thus by induction
\(\alpha^i_j < \beta^1_{G_1},\dots,\beta^m_{G_m}\).
Since \(j\) is arbitrary, we get \(\alpha^i \leq  \beta^1_{G_1},\allowbreak\dots,\beta^m_{G_m}\).
This gives the desired conclusion, \(\alpha^i<\beta^1,\dots,\beta^m\), if at least one list~\(G_k\) is nonempty, for an arbitrary~\(i\in\lrbn\).
If this is not the case, \((*)\)  yields  \(\alpha^i \leq    \alpha^1_{F_1},\dots,\alpha^n_{F_n}\)   for \(i \in\lrbn\), with  lists \(F_i\) not all empty. By definition, this implies \(\alpha^i <    \alpha^1,\dots,\alpha^n\)   for \(i \in\lrbn\), which is impossible by \cref{lemContradic}.

\noindent \emph{2}.
We have  for  \(i\in\lrbn\) and \(j\in\In_{\alpha^i}\) 
\[ 
\begin{array}{ccc} 
\alpha^i_j  <  \alpha^1_{F_1},\dots,\alpha^i_{F_i},\dots,\alpha^n_{F_n}, \beta^1,\dots,\beta^m\text.
\end{array}
\]
Let us fix \(i\)~and~\(j\): a fortiori, with \(F'_i=F_i\cup\so j\),
\[ 
\begin{array}{ccc} 
\alpha^i_j  <  \alpha^1_{F_1},\dots,\alpha^i_{F'_i},\dots,\alpha^n_{F_n}, \beta^1,\dots,\beta^m.   
\end{array}
\]
We have also by weakening, for \(k\in\lrbn\) and \(\ell\in F_k\)
\[ 
\begin{array}{ccc} 
\alpha^k_\ell  <  \alpha^1_{F_1},\dots,\alpha^i_{F'_i},\dots,\alpha^n_{F_n}, \beta^1,\dots,\beta^m.   
\end{array}
\]
Item \emph{1} then yields
 \(\alpha^i_j  <   \beta^1,\dots,\beta^m\).   
As  \(j\) is arbitrary, we get what we want: \hbox{\(\alpha^i \leq  \beta^1,\dots,\beta^m\)} for an arbitrary~\(i\in\lrbn\). 
\end{proof}

The following fact shows that \cref{ax10,ax11} will be valid when we shall descend to the quotient~\(\Ord\).

\begin{lemma} \label{lem-supleftright}
\begin{enumerate}[label=\textit{\arabic*.},ref=\textit{\arabic*}]
%
\item \axdouze.  If \(\alpha<\sup(\alpha,\beta)\), then \(\alpha<\beta\);
\item \axtreize. If \(\gamma<\alpha\) and \(\alpha\leq\sup(\beta,\gamma)\), then \(\alpha\leq \beta\).
\end{enumerate} 
\end{lemma}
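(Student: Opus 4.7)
Part~\emph{1} reduces to a direct application of earlier lemmas: \cref{lemsupetlist} rewrites the hypothesis \(\alpha<\sup(\alpha,\beta)\) as \(\alpha<\alpha,\beta\), and \cref{lema<avirguleb} Item~\emph{1} with \(n=m=1\), \(\alpha^1=\alpha\), \(\beta^1=\beta\) immediately gives \(\alpha<\beta\).

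Part~\emph{2} requires more care. The first step is to unpack the hypotheses. From \(\gamma<\alpha\) I fix a nonempty \(H\subseteq_f\In_\alpha\) with \(\gamma\leq\alpha_H\), so that \(\gamma_g<\alpha_H\) for every \(g\in\In_\gamma\). From \(\alpha\leq\sup(\beta,\gamma)\) I use \cref{lemsupetlist} to obtain \(\alpha_i<\beta,\gamma\) for each \(i\in\In_\alpha\), which unfolds to the existence of \(F_i\subseteq_f\In_\beta\) and \(G_i\subseteq_f\In_\gamma\), not both empty, with \(\alpha_i\leq\beta_{F_i},\gamma_{G_i}\). The target \(\alpha\leq\beta\) amounts, via \cref{factsucc}, to showing \(\alpha_i<\beta\) for each \(i\in\In_\alpha\).

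I plan to prove this in two stages. \emph{Stage~1} establishes \(\alpha_h<\beta\) for every \(h\in H\): given \(h\in H\), I combine \(\alpha_h\leq\beta_{F_h},\gamma_{G_h}\) with \(\beta_f<\beta\) (by \succu) for \(f\in F_h\) and \(\gamma_g<\alpha_H\) for \(g\in G_h\), each weakened to the list \(\beta,\alpha_H\), so that \transt\ delivers \(\alpha_h<\beta,\alpha_H\). Doing this for every \(h\in H\) puts me in position to apply \cref{lema<avirguleb} Item~\emph{1} to the family \((\alpha_h)_{h\in H}\) with extra entry \(\beta\), which strips off the \(\alpha_H\) block and yields \(\alpha_h<\beta\).

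\emph{Stage~2} handles an arbitrary \(i\in\In_\alpha\). With Stage~1 in hand, each \(\alpha_h\leq\beta\) by \cref{axltle}, so \transd\ applied to \(\gamma_g<\alpha_H\) gives \(\gamma_g<\beta\) for every \(g\in\In_\gamma\); together with \(\beta_f<\beta\), every entry of the list \(\beta_{F_i},\gamma_{G_i}\) lies strictly below \(\beta\), and \transt\ applied to \(\alpha_i\leq\beta_{F_i},\gamma_{G_i}\) delivers \(\alpha_i<\beta\). The hard part will be Stage~1: the naive attempt to conclude \(\alpha\leq\beta\) directly fails because the \(\gamma_{G_i}\) piece can a priori only be absorbed using \(\alpha_H\), which reintroduces \(\alpha\) itself; recognising that \cref{lema<avirguleb} Item~\emph{1} is exactly what resolves this apparent circularity is the key idea.
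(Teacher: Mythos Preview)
Your proof of Item~\emph{1} is exactly the paper's argument.

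Your proof of Item~\emph{2} is correct, but the paper takes a much shorter route that avoids your two-stage decomposition entirely. From \(\gamma<\alpha\) the paper extracts, as you do, a nonempty \(F\subseteq_f\In_\alpha\) with \(\gamma\leq\alpha_F\). From \(\alpha\leq\sup(\beta,\gamma)\) it gets \(\alpha\leq\gamma,\beta\) via \cref{lemsupetlist}. Then it simply applies \transu\ (with \(\gamma\leq\alpha_F\) and \(\beta\leq\beta\)) to obtain \(\alpha\leq\alpha_F,\beta\), and finishes with Item~\emph{2} of \cref{lema<avirguleb} (case \(n=1\), \(\alpha^1=\alpha\), \(F_1=F\), \(\beta^1=\beta\)), which directly yields \(\alpha\leq\beta\).

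The difference is that you unpack down to the \(\alpha_i\)'s and \(\gamma_g\)'s and then use Item~\emph{1} of \cref{lema<avirguleb} to resolve the circularity, whereas the paper stays at the level of \(\alpha\) and uses Item~\emph{2}, which is tailor-made for exactly the pattern \(\alpha\leq\alpha_F,\beta\). Your Stage~1/Stage~2 structure works, but it is essentially re-deriving by hand what Item~\emph{2} already packages. Recognising that Item~\emph{2} applies directly saves all of that.
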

\begin{proof} \emph{1}. Assume \(\alpha<\sup(\alpha,\beta)\).
\Cref{lemsupetlist} gives \(\alpha<\alpha,\beta\). Item \emph{1} of \cref{lema<avirguleb} gives \(\alpha<\beta\).
 
\noindent \emph{2}. Assume \(\gamma<\alpha\) and \(\alpha\leq\sup(\beta,\gamma)\). The first hypothesis gives \(\gamma\leq \alpha_F\) for a nonempty \(F\subseteq_f\In_\alpha\). 
The second hypothesis gives   \(\alpha\leq \gamma,\beta\) (by \cref{lemsupetlist}). 
By transitivity we have   \(\alpha\leq  \alpha_F,\beta\).
 Item~\emph{2} of \cref{lema<avirguleb} gives \(\alpha\leq \beta\).
\end{proof}
%

\begin{theorem} \label{thOrd1}
We have constructed \(\Ord\) as an \ept{\fF}.
\end{theorem}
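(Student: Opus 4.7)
The plan is that essentially all fifteen axioms have already been verified at the level of $\ord$ throughout \cref{ConstOrd}; it remains only to organise the ingredients and pass to the quotient. I would proceed in three stages.

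First, I would verify that $\cdot=_\Ord\cdot$ is an equivalence relation on $\ord$: reflexivity is \rfl\ (Item~\ref{corfactOr1-rfl} of \cref{corfactOr1}), symmetry holds by the very definition of $=_\Ord$, and transitivity is an instance of \transu\ (\cref{lemtrans}). I then define $\Ord=\ord/{=_\Ord}$.

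Next, I would check that $\leq$, $<$, $\suc$ and $\sup$ are compatible with $=_\Ord$ and hence descend to $\Ord$. Compatibility of $\leq$ and $<$ follows at once from the three transitivities of \cref{lemtrans}. For $\sup$, suppose $(\alpha^j)_{j\in J}$ and $((\alpha')^j)_{j\in J}$ are families in $\ord\sta$ with $\alpha^j=_\Ord(\alpha')^j$ for every $j$. Applying \cref{factsup} to the reflexive inequality $\sup((\alpha')^k)_k\leq\sup((\alpha')^k)_k$ yields $(\alpha')^j\leq\sup((\alpha')^k)_k$ for each $j$; combining with $\alpha^j\leq(\alpha')^j$ by transitivity and re-applying \cref{factsup} gives $\sup(\alpha^j)_j\leq\sup((\alpha')^j)_j$, and the converse is symmetric. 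For $\suc$, an analogous argument using \cref{factsucc} together with \succu\ (Item~\ref{corfactOr1-succu} of \cref{corfactOr1}) yields the required compatibility.

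Finally, I would verify each of the fifteen axioms for the quotient structure $(\Ord,<,\leq,[\und 0],\sup,\suc)$ by invoking the corresponding established result: \cref{refantisym} by \rfl\ together with the definition of $=_\Ord$; \cref{zero} by Item~1 of \cref{factOr00}; \cref{ax3} by \irfl\ (Item~\ref{corfactOr1-irfl} of \cref{corfactOr1}); \cref{axltle} by \cref{lemltle}; \cref{transun,transdeux,transtrois} by \cref{lemtrans}; \cref{axsuccunaire,axsuccunaire2} by \cref{factOr01}; \cref{supsucfini} by \cref{lemaxsupsucfini}; \cref{ax10,ax11} by \cref{lem-supleftright}; \cref{axsup} by \cref{factsup}; \cref{ax14} by Item~\ref{corfactOr1-ax14} of \cref{corfactOr1}; and \cref{ax15} by \cref{rem-ax15}. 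The one bookkeeping point is to verify that $\sup$ genuinely maps $\Fam(\fF,\Ord\sta)$ into $\Ord\sta$, which reduces to observing that $\und 0<\sup(\alpha^j)_j$ whenever each $\alpha^j\in\ord\sta$; this is Item~2 of \cref{factOr00}. The only real obstacle is thus organisational rather than mathematical: all the inductive labour has already been carried out, and the proof of the theorem amounts to a careful inventory.
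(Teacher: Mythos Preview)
Your proposal is correct and follows essentially the same three-stage organisation as the paper: establish that \(=_\Ord\) is an equivalence relation, show that the relations and operations descend to the quotient, and then inventory the fifteen axioms. The only minor divergence is in the descent step: the paper invokes \supz\ (Item~\ref{corfactOr1-supz} of \cref{corfactOr1}) directly for \(\sup\) and \cref{factOr01} directly for \(\suc\), whereas you re-derive these compatibilities from the characteristic properties (\cref{factsup}, \cref{factsucc}) combined with reflexivity and transitivity; both routes are equally short and valid.
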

\begin{proof} Using \rfl\ and \transu, we first show that the equality is indeed an equivalence relation, and then that the relation \(\leq\) descends to the quotient in \(\Ord\).

Similarly, \transd\ and \transt\ imply that  the relation \(<\) descends to the quotient in \(\Ord\).

The \(\sup\) map descends to the quotient by \cref{corfactOr1}, Item~\ref{corfactOr1-supz}.


The unary \(\suc\) map descends to the quotient by \cref{factOr01}. 



It remains to note that \cref{refantisym,zero,ax3,axltle,transun,transdeux,transtrois,axsuccunaire,axsuccunaire2,supsucfini,ax10,ax11,axsup,ax14,ax15} of \epts{\fF} have been proved above. See, respectively: \cref{corfactOr1} (Item~\ref{corfactOr1-rfl}); \cref{factOr00} (Item~\ref{factOr001}); \cref{corfactOr1} (Item~\ref{corfactOr1-irfl}); \cref{lemltle}; \cref{lemtrans}; \cref{factOr01}; \cref{lemaxsupsucfini}; \cref{lem-supleftright}; \cref{factsup}; \cref{corfactOr1} (Item~\ref{corfactOr1-ax14}); \cref{rem-ax15}.
\end{proof}

The following theorem generalises \cref{factnatord}.
\begin{theorem} \label{thOrd2}
The set \(\Ord\) is not reduced to a point. More precisely:
\begin{itemize}
\item for all \(\alpha,\beta\in\Ord\),  \(\beta\leq \alpha\) and \(\alpha<\beta\) are incompatible;
\item  the map \(n\mapsto \und n\colon\NN\to\Ord\) 
is injective (\(m<n\) if and only if \(\und m <\und n\));
\item for all \(\alpha\in\Ord\) and  \(n>m\) in \(\NN\),  it is impossible that \(\suc^{(n)}(\alpha)=_\Ord\suc^{(m)}(\alpha)\).
\end{itemize}
\end{theorem}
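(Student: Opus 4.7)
My plan is to prove the three assertions in order, making the first the pivotal non-triviality statement and then deploying it twice. All three rely only on results already established in \cref{ConstOrd,PropfondOrd}; the main tool for the quotient step is \cref{thOrd1}, which guarantees that the relations and operations descend from \(\ord\) to \(\Ord\).

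For the first bullet, I suppose both \(\beta\leq\alpha\) and \(\alpha<\beta\) hold in \(\Ord\). Applying \cref{transdeux}, which is valid in \(\Ord\) by \cref{lemtrans} together with \cref{thOrd1}, to \(\alpha<\beta\) and \(\beta\leq\alpha\), I obtain \(\alpha<\alpha\). Lifting back to any representative \(\tilde\alpha\in\ord\), the descent of \(<\) turns this into \(\tilde\alpha<\tilde\alpha\) in \(\ord\), which is ruled out by \cref{corfactOr1} (Item~\ref{corfactOr1-irfl}).

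The second bullet has two parts. The equivalence \(m<n \iff \und m<\und n\) is precisely \cref{factnatord} (Item~\emph{2}) and remains valid in \(\Ord\). For the injectivity of \(n\mapsto\und n\), I assume \(\und m=_\Ord \und n\); by the decidable trichotomy on \(\NN\), either \(m=n\), or \(m<n\), or \(n<m\). In the case \(m<n\), the equivalence supplies \(\und m<\und n\), while \(\und m=_\Ord \und n\) supplies \(\und n\leq\und m\), contradicting the first bullet; the case \(n<m\) is symmetric. Hence \(m=n\).

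For the third bullet, I first prove by induction on \(k\geq 1\) that \(\beta<\suc^{(k)}(\beta)\) for every \(\beta\in\Ord\). The base case \(\beta<\suc(\beta)\) is \cref{corfactOr1} (Item~\ref{corfactOr1-37}). For the induction step, applying the base case to the ordinal \(\suc^{(k)}(\beta)\) gives \(\suc^{(k)}(\beta)<\suc^{(k+1)}(\beta)\), and combining with the induction hypothesis via transitivity produces \(\beta<\suc^{(k+1)}(\beta)\). Now if \(\suc^{(n)}(\alpha)=_\Ord\suc^{(m)}(\alpha)\) with \(n>m\), I set \(\beta=\suc^{(m)}(\alpha)\) and \(k=n-m\geq 1\); the preliminary claim together with the assumed equality yield \(\beta<\beta\), which together with \(\beta\leq\beta\) contradicts the first bullet. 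No genuine obstacle is expected: every ingredient is already in place, and the only care required is to respect the \(\ord\)/\(\Ord\) distinction, which \cref{thOrd1} has already made routine.
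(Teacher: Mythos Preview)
Your argument is correct and matches the paper's own proof: the paper simply says the first item follows from \irfl\ and \transd, and that ``the rest follows''. You have filled in exactly those details, using the same ingredients (transitivity~2 to obtain \(\alpha<\alpha\), irreflexivity in \(\ord\) via \cref{corfactOr1}, \cref{factnatord} for the second bullet, and iterated \cref{corfactOr1}~(Item~\ref{corfactOr1-37}) for the third).
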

\begin{proof} 
The first item is a consequence of \irfl\ and of \transd. The rest follows.
\end{proof}
%

\begin{theorem} \label{thOrd3}
\(\Ord\) is an initial object in the category of \epts{\fF}.
\end{theorem}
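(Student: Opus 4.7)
The plan is to define a map \(\Phi : \ord \to E\) by recursion on the inductive definition of \(\ord\), then show it descends to \(\Ord\), preserves the whole \(\fF\)-order structure, and is unique. Set \(\Phi(\und 0) = 0_E\); for \(\alpha = \s{\alpha_i}{i \in \In_\alpha} \in \ord\sta\), set \(\Phi(\alpha) = \s{\Phi(\alpha_i)}{i \in \In_\alpha}\), where on the right-hand side the infinitary successor is the one defined inside \(E\) by \cref{axsucc}, namely \(\sup_E(\suc_E(\Phi(\alpha_i)))_{i \in \In_\alpha}\).

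The crux is a preservation lemma, proved by induction on \(\alpha\) simultaneously for \(\leq\) and \(<\) with lists on the right: for \(\alpha, \beta^1, \dots, \beta^m \in \ord\), (i) if \(\alpha \leq \beta^1, \dots, \beta^m\) in \(\ord\) then \(\Phi(\alpha) \leq_E \sup_E(\Phi(\beta^1), \dots, \Phi(\beta^m))\); (ii) if \(\alpha < \beta^1, \dots, \beta^m\) in \(\ord\) then \(\Phi(\alpha) <_E \sup_E(\Phi(\beta^1), \dots, \Phi(\beta^m))\). For (i), the definition of \(\leq\) in \(\ord\) unfolds to \(\alpha_i < \beta^1, \dots, \beta^m\) for all \(i \in \In_\alpha\); applying (ii) to each strictly smaller subordinal \(\alpha_i\) yields \(\Phi(\alpha_i) <_E \sup_E(\Phi(\beta^1), \dots, \Phi(\beta^m))\), and the characteristic property of the infinitary successor in \(E\) delivers (i). For (ii), the hypothesis produces \(F_j \subseteq_f \In_{\beta^j}\) not all empty with \(\alpha \leq \beta^1_{F_1}, \dots, \beta^m_{F_m}\); applying (i) at this same \(\alpha\) gives \(\Phi(\alpha) \leq_E \sup_E(\Phi(\beta^j_k))_{j \in \lrbm,\, k \in F_j}\), and the second item of \cref{factltle1}, transported into \(E\) via the identity \(\Phi(\beta^j) = \s{\Phi(\beta^j_k)}{k \in \In_{\beta^j}}\) that holds by construction, then yields (ii). Specialising \(m = 1\) gives preservation of \(\leq\) and \(<\); antisymmetry of \(\leq_E\) then shows that \(\Phi\) descends to a well-defined map \(\Ord \to E\).

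It remains to verify preservation of the remaining operations. By definition \(\Phi(\und 0) = 0_E\). The unary successor \(\suc(\alpha) \in \ord\) is the element with \(\In = \so 0\) and unique subordinal \(\alpha\), so the definition of \(\Phi\) gives \(\Phi(\suc(\alpha)) = \sup_E(\suc_E(\Phi(\alpha)))\), which coincides with \(\suc_E(\Phi(\alpha))\) because a singleton \(\sup_E\) reduces to its argument by the characteristic property \cref{axsup}. Preservation of \(\sup\) on \(\Fam(\fF, \ord\sta)\) follows by unfolding both sides: the definition of \(\sup\) in \(\ord\) via the disjoint union \(K = \sum_j \In_{\alpha^j}\) of indexors, combined with the definition of \(\Phi\) on \(\ord\sta\), matches \(\sup_E(\Phi(\alpha^j))_{j \in J}\) after invoking generalised associativity of \(\sup_E\) (itself a consequence of \cref{axsup}).

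Uniqueness is immediate: any morphism \(\Psi : \Ord \to E\) of \(\fF\)-orders must satisfy \(\Psi(\und 0) = 0_E\) and \(\Psi(\s{\alpha_i}{i \in \In_\alpha}) = \s{\Psi(\alpha_i)}{i \in \In_\alpha}\), so structural induction on \(\ord\) forces \(\Psi = \Phi\). The main obstacle is organising the simultaneous induction (i)--(ii) so that (i) at \(\alpha\) only invokes (ii) at strict subordinals while (ii) at \(\alpha\) invokes (i) at \(\alpha\) itself; once this dependency is resolved, the remaining verifications reduce to bookkeeping with the characteristic properties of \(\sup_E\) and \(\suc_E\).
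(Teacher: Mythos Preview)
Your proof is correct and follows the same line as the paper's own argument, which is given only as a sketch: define \(\Phi\) recursively on \(\ord\), then check that the inductive clauses for \(\cdot\leq\cdots\) and \(\cdot<\cdots\) transfer to \(E\) via \cref{factltle1}. You have simply filled in the details the paper leaves implicit, including the organisation of the simultaneous induction and the verification that \(\Phi\) preserves \(\sup\) and \(\suc\).
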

\begin{proof}[Sketch of proof]
 The structure is purely algebraic and in order to construct \(\Ord\), we have only used the axioms of the structure.

\noindent  In fact, let us consider an object \((E,<_E,\leq_E,0_E,\sup_E,\suc_E)\) in the category. Elements of \(\ord\) do have their copies in \(E\). Furthermore, the relations \(\cdot<\cdots\) and \(\cdot\leq \cdots\) defined in \(\ord\) are valid in \(E\) by \cref{factltle1} if interpreted in \(E\) with finite \(\sup\)'s on the right-hand side (as we may by \cref{lemsupetlist}).
This implies that there is a unique morphism from \(\Ord\) to \(E\)
in the category.
\end{proof}
%

\subsection{More properties}
%

\begin{proposition} \label{propordbienfonde}
The binary relation \(<\) on \(\Ord\) is well-founded.
\end{proposition}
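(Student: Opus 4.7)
The plan is to reduce \(<\)-well-foundedness on \(\Ord\) to structural induction on \(\ord\), available by \cref{factsousord}. Fix a \(<\)-hereditary property \(P\) on \(\Ord\); the goal is to establish \(P([\alpha])\) for every \(\alpha\in\ord\). I would proceed by structural induction on \(\alpha\in\ord\) but actually prove the strengthened statement \(S(\alpha)\): \emph{for every \(\beta\in\ord\) with \([\beta]\leq[\alpha]\) in \(\Ord\), \(P([\beta])\) holds}. Taking \(\beta=\alpha\) and using reflexivity then delivers \(P([\alpha])\).

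The base case \(\alpha=\und0\) is immediate: \cref{zero} and antisymmetry force \(\beta=_\Ord\und0\), while no \(\gamma\) satisfies \(\gamma<\und0\) (no nonempty \(F\subseteq_f\In_{\und0}\) exists, cf.\ \cref{factOr00}), so \(P([\und0])\) follows vacuously from hereditariness. For the inductive step, write \(\alpha=\s{\alpha_i}{i\in\In_\alpha}\) and assume \(S(\alpha_i)\) for every \(i\). Given \(\beta\leq\alpha\), hereditariness reduces \(P([\beta])\) to establishing \(P([\gamma])\) for every \(\gamma<\beta\); transitivity (\cref{lemtrans}) gives \(\gamma<\alpha\), and then \cref{factOr1}\emph{b} together with \cref{lemsupetlist} supplies a nonempty finite \(F\subseteq_f\In_\alpha\) with \([\gamma]\leq[\sup_{i\in F}\alpha_i]\) in \(\Ord\).

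The main obstacle is an auxiliary statement about finite suprema that must be established in parallel: \emph{for every finite family \((\mu^1,\dots,\mu^r)\) in \(\ord\) such that \(S(\mu^k)\) holds for each \(k\), \(P([\gamma])\) holds whenever \([\gamma]\leq[\sup(\mu^1,\dots,\mu^r)]\).} This cannot be inferred for free from the individual \(S(\mu^k)\)'s since, constructively, \([\gamma]\leq[\sup_k\mu^k]\) does not entail \([\gamma]\leq[\mu^k]\) for any single \(k\) (cf.\ \cref{exaLLPO}). I would prove it by well-founded recursion: hereditariness reduces the claim to \(P([\delta])\) for \(\delta<\gamma\leq\sup\), hence \(\delta<\sup(\mu^1,\dots,\mu^r)\); unfolding the definitions of \(<\) and \(\sup\) on \(\ord\) (\cref{factOr1}\emph{b} and \cref{defsupord}) then yields \([\delta]\leq[\sup_{(k,j)\in H}(\mu^k)_j]\) for some nonempty finite set \(H\) of pairs with \(j\in\In_{\mu^k}\). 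Since \(S\) is manifestly downward-closed under \(\leq\) (if \(\nu\leq\mu\) and \(S(\mu)\) then \(S(\nu)\) by transitivity), each \(S(\mu^k)\) transfers to \(S((\mu^k)_j)\); the auxiliary statement then applies recursively to the strictly smaller family \(((\mu^k)_j)_{(k,j)\in H}\), whose entries are all \(\lessdot\)-subordinals of entries of the original family. Termination is guaranteed by the \(\lessdot\)-well-foundedness of \(\ord\) (\cref{factsousord}): the multiset extension of \(\lessdot\) on finite families remains well-founded. Applying this auxiliary statement to \((\alpha_i)_{i\in F}\) with the inductive hypotheses \(S(\alpha_i)\) produces \(P([\gamma])\), closing the inductive step.
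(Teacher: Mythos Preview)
Your argument is correct, and it supplies precisely the details that the paper omits: the paper's proof is the single sentence ``This is a direct consequence of \cref{factsousord}'', leaving to the reader the reduction from \(<\)-induction on \(\Ord\) to \(\lessdot\)-induction on \(\ord\). You have correctly identified the genuine obstacle in that reduction---namely that \(\gamma<\alpha\) only yields \(\gamma\leq\sup(\alpha_i)_{i\in F}\) for a finite \(F\), not \(\gamma\leq\alpha_i\) for a single~\(i\)---and your auxiliary statement together with the multiset-order recursion resolves it soundly.

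One remark on economy: invoking the Dershowitz--Manna well-foundedness of the multiset extension is correct but heavier than necessary. A lighter alternative is to prove first, by a double \(<\)-accessibility induction, that \(\sup(\mu,\nu)\) is \(<\)-accessible whenever \(\mu\) and \(\nu\) are (using that any \(\delta<\sup(\mu,\nu)\) satisfies \(\delta\leq\sup(\mu',\nu')\) with \(\mu'=\sup(\mu_i)_{i\in F_1}<\mu\) and \(\nu'=\sup(\nu_j)_{j\in F_2}<\nu\), by \cref{factsupsuc}); then iterate to finite families. This is equivalent in spirit to your argument and yields the same conclusion without the external multiset machinery.
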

\begin{proof}
This is a direct consequence of \cref{factsousord}.
\end{proof}
%

\begin{lemma}[weak forms of the disjunction ``\(\alpha\leq\beta\) or \(\beta<\alpha\)'']
  \label{lemcut} 
Let \(r\geq 1\) and \(\alpha,\beta^1,\dots,\beta^r,\gamma\in\ord\). 
\begin{enumerate}
\item If \(\alpha\leq \beta\) and \(\beta<\alpha,\gamma^1,\dots,\gamma^r\), then  \(\beta<\gamma^1,\dots,\gamma^r\).
\item If \(\beta< \alpha\) and \(\alpha\leq \beta,\gamma^1,\dots,\gamma^r\), then  \(\alpha\leq \gamma^1,\dots,\gamma^r\). 
\end{enumerate}
\end{lemma}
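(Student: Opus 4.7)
The plan is to translate the list-based inequalities into inequalities involving finite \(\sup\) via \cref{lemsupetlist}, and then read off the conclusions from \cref{lem-supleftright}. Throughout, let \(\delta=\sup(\gamma^1,\dots,\gamma^r)\); by \cref{lemsupetlist} the hypothesis \(\beta<\alpha,\gamma^1,\dots,\gamma^r\) means \(\beta<\sup(\alpha,\delta)\), the hypothesis \(\alpha\leq\beta,\gamma^1,\dots,\gamma^r\) means \(\alpha\leq\sup(\beta,\delta)\), and the desired conclusions amount to \(\beta<\delta\) and \(\alpha\leq\delta\) respectively.

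For item \emph{2}, we have \(\beta<\alpha\) and \(\alpha\leq\sup(\beta,\delta)\). This is precisely the hypothesis of \axtreize{} (\cref{lem-supleftright}, item \emph{2}), reading that lemma with \(\beta\) playing the role of its \(\gamma\) and \(\delta\) playing the role of its \(\beta\). We conclude directly that \(\alpha\leq\delta\), i.e.\ \(\alpha\leq\gamma^1,\dots,\gamma^r\).

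For item \emph{1}, I would first establish the monotonicity fact \(\sup(\alpha,\delta)\leq\sup(\beta,\delta)\). Indeed, the characteristic property of \(\sup\) (\cref{factsup}) gives both \(\beta\leq\sup(\beta,\delta)\) and \(\delta\leq\sup(\beta,\delta)\); combining \(\alpha\leq\beta\) with the first by \transu{} yields \(\alpha\leq\sup(\beta,\delta)\), and another application of \cref{factsup} then gives \(\sup(\alpha,\delta)\leq\sup(\beta,\delta)\). Chaining this with \(\beta<\sup(\alpha,\delta)\) via \transd{} (\cref{lemtrans}) gives \(\beta<\sup(\beta,\delta)\), whence \axdouze{} (\cref{lem-supleftright}, item \emph{1}) yields \(\beta<\delta\), i.e.\ \(\beta<\gamma^1,\dots,\gamma^r\).

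Neither step looks to be a deep obstacle; the one place where a little thought is required is the monotonicity step for item \emph{1}, since \axdouze{} in its stated form compares \(\beta\) with \(\sup(\beta,\delta)\) (same first argument), and we must convert the hypothesis \(\beta<\sup(\alpha,\delta)\) into that shape before applying it.
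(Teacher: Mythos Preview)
Your proof is correct and follows exactly the route the paper sketches: introduce the finite \(\sup\) on the right via \cref{lemsupetlist} and reduce to \cref{lem-supleftright}. The only place where you add substance beyond the paper's one-line proof is the monotonicity step \(\sup(\alpha,\delta)\leq\sup(\beta,\delta)\) needed to bring item~\emph{1} into the exact shape required by \axdouze; this is the right thing to do and is implicit in the paper's ``reduce to already established properties''.
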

\begin{proof}
Introduce \(\gamma=\sup(\gamma^1,\dots,\gamma^r)\). Using \cref{lemsupetlist}, both items reduce to already established properties.
\end{proof}
%

\begin{definition} \label{defiordfiltrant}
An element \(\beta=\sucr\beta\in\ord\) is said to be \emph{filtering} if for each  \(F\subseteq_f\In_\beta\) there exists \(j\in \In_\beta\) such that \(\sup(\beta_i)_{i\in F}\leq\beta_j\).
\end{definition}

\begin{lemma} \label{filtrant}
For each \(\alpha\in\ord\), there exists \(\beta\in\ord\) such that
\(\alpha=_\Ord  \beta\) and \(\beta\) is filtering.
\end{lemma}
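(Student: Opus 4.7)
The plan is to ``saturate'' \(\alpha\) in one stroke by re-indexing its definitional subordinals over the finitely enumerated subsets of \(\In_\alpha\); since \cref{defiordfiltrant} is a one-level condition on the top index set of \(\beta\), no induction on the tree structure of \(\alpha\) will be necessary.

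Concretely, if \(\alpha=\und0\) there is nothing to do, so take \(\beta=\und0\). Otherwise write \(\alpha=\s{\alpha_i}{i\in\In_\alpha}\) and let \(J\in\fF\) be an \indexor\ isomorphic to the set of finitely enumerated subsets of \(\In_\alpha\); such a \(J\) is available by the third assumption on \(\fF\). For each \(F\in J\) I would put \(\beta_F=\sup(\alpha_i)_{i\in F}\) (with the convention \(\beta_{\emptyset}=\und0\)) and define \(\beta=\s{\beta_F}{F\in J}\).

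To verify \(\alpha=_\Ord\beta\), I would apply \sucdef\ on both sides. For \(\alpha\leq\beta\), the singleton \(F_i=\so i\in J\) gives \(\beta_{F_i}=\alpha_i\), so \(\alpha_i\leq\beta_{F_i}\) by \rfl\ and hence \(\alpha_i<\beta\) straight from the definition of \(<\). For \(\beta\leq\alpha\), the case \(F=\emptyset\) gives \(\beta_F=\und0<\alpha\) by \cref{factOr00}; when \(F\) is nonempty, \succu\ gives \(\alpha_i<\alpha\) for each \(i\in F\), and \cref{factsupsuc} promotes this to \(\beta_F=\sup(\alpha_i)_{i\in F}<\alpha\).

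The filtering property should then fall out from the fact that finite unions of finitely enumerated subsets of \(\In_\alpha\) are still in \(J\): given any \(G\subseteq_f J\), set \(F^\star=\bigcup_{F\in G}F\in J\). Each \(i\) occurring in some \(F\in G\) also lies in \(F^\star\), so the characteristic property of \(\sup\) yields \(\beta_F\leq\beta_{F^\star}\) for every \(F\in G\), and hence \(\sup(\beta_F)_{F\in G}\leq\beta_{F^\star}\), exactly the witness needed. The argument is essentially algebraic rather than inductive; the only substantive point is the very first step, namely that \(J\) is available as an element of \(\fF\), which is precisely what the assumption on \(\fF\) about finitely enumerated subsets is designed to ensure.
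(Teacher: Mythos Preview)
Your construction is exactly the paper's: reindex over the set \(K\) of finitely enumerated subsets of \(\In_\alpha\) and set \(\beta_F=\sup(\alpha_i)_{i\in F}\), then take \(\beta=\s{\beta_F}{F\in K}\). The paper's proof is a one-liner that states this construction and asserts \(\alpha=_\Ord\beta\); you have simply supplied the verifications (via \rfl, \succu, \cref{factsupsuc}, and the characteristic property of \(\sup\)) that the paper leaves to the reader.
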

\begin{proof}
If \(\alpha=\s{\alpha_i}{i\in J}\), we let \(K\) be the set of finitely enumerated subsets of \(J\), and for \(F\subseteq_f J\) we let \(\beta_F=\sup(\alpha_j)_{j\in F}\). Finally we let \(\beta=\s{\beta_F}{F\in K}\).
\end{proof}

\hum{ 
\begin{remark} \label{thOrdComplet} 
(Complétude) ??? 
 On aimerait bien un résultat du genre suivant.
\emph{Pour \(\beta\in\Ord\) on note \(\dar \beta=\sotq{\alpha\in\Ord}{\alpha\leq \beta}\). 
Soit \(\beta\in\Ord\) and \((\gamma^{(\alpha)})_{\alpha\in\dar\beta}\) 
une famille dans \(\Ord\) indexée par~\(\dar\beta\): alors, cette famille admet une borne supérieure  in \(\Ord\).}
Cela serait  peut-être pratique pour la suite, mais semble hors d'atteinte: il faudrait parcourir tous les noms d'ordinaux \(\leq \beta\), mais ceci ne semble pas former un ensemble. En tout cas, même si c'est un ensemble, il semble que l'on n'a aucun accès raisonnable à la relation d'ordre.
\end{remark}
}

\subsection{Elementary ordinal arithmetic}\label{subsec-arit-ord}

\subsubsection*{(Sequential) addition}
The sequential addition  \(\alpha+\beta\) (\(\alpha\) followed by \(\beta\): addition is not  commutative) is defined by induction on~\(\beta\): 
\[
  \alpha+\und0=\alpha\quad \hbox{and}\quad 
\alpha+\beta=\s{\alpha+\beta_j}{j\in\In_\beta} \;\hbox{ if }\;\beta=\s{\beta_j}{j\in\In_\beta} \in\ord\sta.
\]  

The formula for \(\alpha+\beta\) works only in the case \(\In_\beta\neq \NN_0\) (it would yield \(\alpha+\und0=\und0\)).
%
We also have \(\alpha+\beta=\sup((\alpha+\beta_j)+\und1)_{j\in\In_\beta}\) if \(\In_\beta\neq \NN_0\).

The following properties can be proved by induction:
\begin{itemize}
\item if \(\alpha\leq \alpha'\) and \(\beta\leq \beta'\), then 
\(\alpha+\beta\leq \alpha'+\beta'\);
\item \((\alpha+\beta)+\gamma=\alpha+(\beta+\gamma)\);
\item \(\alpha+\und0=\und0+\alpha=\alpha\); 
\item  \(\alpha+\beta\leq \alpha+\gamma\) if and only if \(\beta\leq\gamma\); 
\item  \(\alpha+\beta< \alpha+\gamma\) if and only if \(\beta< \gamma\);
\item  \(\alpha=\und1+\alpha\) if and only if \(\omega\leq  \alpha\);
\item if \(\alpha\leq \gamma\), then there is \(\beta\) such that \(\gamma=\alpha+\beta\);
\item if \(\alpha< \gamma\), then there is \(\beta\ne\und0\) such that \(\gamma=\alpha+\beta\).
\end{itemize}

\subsubsection*{Sequential sum}
Let \(J\in\fF\) and consider a well-founded linear order relation~\(\prec\) on \(J\)
with a detachable minimal element \(0_J\). 
Let \((\beta^j)_{j\in J}\) be an element of \(\Fam(J,\ord)\). The \(\prec\)-indexed sequential sum \(\sum_{j\prec \ell}\beta^j\) is defined by induction on \(\ell\in{(J,\prec)}\): 
\[
  \som_{j\prec 0_J}\beta^j=0_J\quad \hbox{and}\quad 
\som_{j\prec \ell}\beta^j=
\sup \left(\big(\som_{j\prec k}\beta^j\big)+
\beta^{k}\right)_{k\prec\ell} \hbox{ if }0_J\prec\ell.
\] 
We show by induction on \(\prec\) that, given two families \((\beta^j)_{j\in J}\) 
and \((\gamma^j)_{j\in J}\) such that \(\beta^j\leq\gamma^j \) for all \(j\in J\), we have \(\som_{j\prec \ell}\beta^j\leq \som_{j\prec \ell}\gamma^j\) for all \(\ell\in J\). This construction descends therefore to the quotient~\(\Ord\).

\begin{remark} \label{remsomindexée}  
This construction allows us to define a map \(\ord_2^\mathrm{Br}\to\ord_2\), where \(\ord_2^\mathrm{Br}\) is the set of names of Brouwer ordinals. See \cite
{troelstra69} and \cite
{brouwer18,brouwer26}. Troelstra only treats countable Brouwer ordinals.
\end{remark}

\subsubsection*{Multiplication}

We define  \(\alpha\cdot\beta\) by induction on \(\beta\in\ord\): 
\[
  \alpha\cdot\und0=\und0\quad \hbox{and}\quad 
\alpha\cdot\beta=\sup (\alpha\cdot\beta_j +\alpha)_{j\in\In_\beta} \;\hbox{ if }\;\beta=\s{\beta_j}{j\in\In_\beta} \in\ord\sta.
\]  

The following properties can be proved by induction:
\begin{itemize}
\item  if \(\alpha\leq \alpha'\) and \(\beta\leq \beta'\), then
\(\alpha\cdot\beta\leq \alpha'\cdot\beta'\);
\item \((\alpha\cdot\beta)\cdot\gamma=\alpha\cdot(\beta\cdot\gamma)\);
\item \(\alpha\cdot\und1=\und1\cdot\alpha=\alpha\); 
\item  
\(\alpha\cdot(\beta+\gamma)=(\alpha\cdot\beta)+(\alpha\cdot\gamma)\);
\item if \(\und1\leq \alpha\), then \(\alpha\cdot\beta\leq \alpha\cdot\gamma\) if and only if \(\beta\leq\gamma\); 
\item
if \(\und1\leq \alpha\), then  \(\alpha\cdot\beta< \alpha\cdot\gamma\) if and only if \(\beta< \gamma\).
%
\end{itemize}

\subsubsection*{Exponentiation}
We define  \(\alpha^\beta\) by induction on \(\beta\in\ord\), as follows: 
\[
  \alpha^{\und0}=\und1\quad \hbox{and}\quad 
\alpha^\beta=\sup ( \alpha^{\beta_j} \cdot \alpha)_{j\in\In_\beta} \;\hbox{ if }\;\beta=\s{\beta_j}{j\in\In_\beta} \in\ord\sta.
\]  

\subsubsection*{Ackermann}

It is possible to continue this elementary arithmetic à la Ackermann as in \citealt{finsler51}. We define by induction an ordinal \(\Acko(\alpha,\beta,\gamma)\) that we get by iterating \(\gamma\) times the  preceding  map, initialised at \(\alpha\), i.e.\ more precisely

\[ 
\begin{array}{rcl} 
\Acko(\alpha,\beta,\und0)  & =  & \alpha+\beta  \\[.3em] 
\Acko(\alpha,\und0,\gamma)  & =  &  \alpha \qquad \hbox{ if }\;\gamma\in\ord\sta\\[.3em] 
\Acko(\alpha,\beta,\gamma)  & =  & \sup \bigl(\sup
  (\Acko(\Acko(\alpha,\beta_j,\gamma),\alpha,\gamma_k))_{j\in\In_\beta}\bigr)_{k\in\In_\gamma} \\[.3em]&& \qquad\quad \hbox{ if }\;\beta=\s{\beta_j}{j\in\In_\beta}\;\hbox{ and }\;\gamma=\s{\gamma_k}{k\in\In_\gamma}.
 \end{array}
\]

In particular, \(\varepsilon_0=\Acko({\omega,\omega,\und4})\).




\section{Countable ordinals}

\subsection{First steps}
As previously indicated, we get countable ordinals when we choose as set of \indexors  
\[\fF_2=\sotq{\NN_k}{k\in\NN, k\geq0} \cup \so\NN\]
with convenient operations for the set of finite subsets of an \(I\in \fF_2\) and for disjoint unions.
We write \(\ord_2\) and \(\Ord_2\) for \(\ord_{\fF_2}\) and \(\Ord_{\fF_2}\).
Thus \(\Ord_2\) is the set of ordinals of the second class
and \(\ord_2\) is a set of names for elements of \(\Ord_2\).

\begin{lemma} \label{lemsucccroissant}
Any countable ordinal is the \(\suc\) of a nondecreasing sequence of countable ordinals.
\end{lemma}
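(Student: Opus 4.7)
The plan is as follows. Write $\alpha = \s{\alpha_i}{i \in \In_\alpha}$ with $\In_\alpha \in \fF_2$, so that $\In_\alpha$ is either $\NN$ or some $\NN_k$. The case $\alpha = \und0$ (i.e.\ $\In_\alpha = \NN_0$) is trivial: take the constant sequence $\beta_n = \und0$ if such a sequence is admitted, or handle this case separately under the convention that the empty sequence yields $\und0$. For the remaining cases I first reduce everything to an $\NN$-indexed family by padding: if $\In_\alpha = \NN_k$ with $k\geq 1$, set $\gamma_n = \alpha_n$ for $n<k$ and $\gamma_n = \alpha_{k-1}$ for $n\geq k$; if $\In_\alpha = \NN$, set $\gamma_n = \alpha_n$. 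An application of \cref{factsupsuc} (finite $\suc$ commutes with finite $\sup$, so a finite $\s{\,}{\,}$ is unchanged by repeating a term) combined with the characteristic property of $\sup$ gives $\alpha =_\Ord \s{\gamma_n}{n\in\NN}$.

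Now I make the sequence nondecreasing: put $\beta_n = \sup(\gamma_0,\dots,\gamma_n)$, which is visibly nondecreasing in $n$ and well defined thanks to \cref{propdefsupfini}. I then claim $\alpha =_\Ord \s{\beta_n}{n\in\NN}$, and prove both inequalities. For $\alpha \leq \s{\beta_n}{n\in\NN}$: from $\gamma_n \leq \beta_n$ and \cref{factsuciso} we get $\suc(\gamma_n)\leq \suc(\beta_n)$, and the characteristic property of $\sup$ then yields
\[
\alpha \;=\; \sup(\suc(\gamma_n))_{n\in\NN} \;\leq\; \sup(\suc(\beta_n))_{n\in\NN} \;=\; \s{\beta_n}{n\in\NN}.
\]

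For the reverse inequality $\s{\beta_n}{n\in\NN} \leq \alpha$, the key observation is that each $\gamma_k$ satisfies $\gamma_k < \alpha$: indeed $\suc(\gamma_k) \leq \sup(\suc(\gamma_n))_{n\in\NN} = \alpha$ by the characteristic property of $\sup$, and then $\gamma_k < \suc(\gamma_k) \leq \alpha$ by \cref{axsuccunaire} and transitivity. Applied to the finitely many indices $0,\dots,n$, the last clause of \cref{factsupsuc} gives $\beta_n = \sup(\gamma_0,\dots,\gamma_n) < \alpha$. By \cref{axsuccunaire2} this upgrades to $\suc(\beta_n) \leq \alpha$, and a last application of the characteristic property of $\sup$ gives $\s{\beta_n}{n\in\NN} = \sup(\suc(\beta_n))_{n\in\NN} \leq \alpha$, as wanted.

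The main obstacle is the strict inequality $\beta_n < \alpha$: one might naively only obtain $\beta_n \leq \alpha$ from $\gamma_k \leq \alpha$ and the characteristic property of $\sup$, which would be too weak to conclude $\suc(\beta_n)\leq \alpha$. The crucial point is that each $\gamma_k$ is \emph{strictly} less than $\alpha$ (because it enters $\alpha$ under a $\suc$) and that a finite $\sup$ of elements strictly below $\alpha$ is itself strictly below $\alpha$; this is precisely the last clause of \cref{factsupsuc}, and the whole argument hinges on it.
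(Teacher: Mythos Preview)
Your argument is correct and is essentially the construction underlying \cref{filtrant}, which is all the paper's proof invokes. You specialise that construction to initial segments $\{0,\dots,n\}$ (after padding to an $\NN$-indexed family), which directly yields a nondecreasing sequence rather than merely a filtering one; the key step in both is the same point you single out, namely that a finite $\sup$ of definitional subordinals of $\alpha$ remains strictly below $\alpha$.
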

\begin{proof}
This is \cref{filtrant}.
\end{proof}
%

\begin{proposition} \label{propOrdLPO1} 
Assume \LPO\@. 
Then, for \(\alpha,\beta\in\Ord\), we have  \(\alpha\leq \beta\) or \(\beta<\alpha\).
\end{proposition}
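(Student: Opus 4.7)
The plan is to adapt the proof of \cref{propOrdclass}, replacing \LEM\ by \LPO\@. We prove by simultaneous induction on the pair \((\alpha,\beta)\), using the well-foundedness of subordinals (\cref{factsousord}), the two statements
\begin{itemize}
\item[(i)] \(\alpha\leq\beta\) or \(\beta<\alpha\);
\item[(ii)] \(\beta\leq\alpha\) or \(\alpha<\beta\).
\end{itemize}
The induction hypothesis provides, for every \(j\in\In_\beta\), the disjunction ``\(\alpha\leq\beta_j\) or \(\beta_j<\alpha\)'' (item (i) applied to the smaller pair \((\alpha,\beta_j)\)), and for every \(i\in\In_\alpha\), the symmetric ``\(\beta\leq\alpha_i\) or \(\alpha_i<\beta\)''. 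Note that by \cref{thOrd2}, in each such disjunction the two alternatives are incompatible, so each IH-disjunction gives a genuine binary choice.

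The core step replaces the \LEM\ move in the classical proof. From the \(\In_\beta\)-indexed family of disjunctions ``\(\alpha\leq\beta_j\) or \(\beta_j<\alpha\)'', we wish to conclude that either there exists \(j\) with \(\alpha\leq\beta_j\), or \(\beta_j<\alpha\) for all \(j\). Since \(\fF=\fF_2\), the set \(\In_\beta\) is either \(\NN_k\) or \(\NN\). When \(\In_\beta=\NN_k\) is finite, iterating the binary disjunction \(k\) times yields the required dichotomy. When \(\In_\beta=\NN\), we form the binary sequence \((u_j)_{j\in\NN}\) with \(u_j=1\) if \(\alpha\leq\beta_j\) and \(u_j=0\) if \(\beta_j<\alpha\), and its cumulative maximum \(v_n=\max(u_0,\dots,u_n)\), a nondecreasing \(\so{0,1}\)-sequence. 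By \LPO\ there exists \(n_0\) with \(v_m\leq v_{n_0}\) for all \(m\). If \(v_{n_0}=0\), then \(u_j=0\), i.e.\ \(\beta_j<\alpha\), for all \(j\), whence \(\beta\leq\alpha\) by \cref{factsucc}. If \(v_{n_0}=1\), then some \(u_j=1\), i.e.\ \(\alpha\leq\beta_j\), and since \(\beta_j<\beta\) by \cref{corfactOr1}.\ref{corfactOr1-succu}, transitivity (\cref{lemtrans}) yields \(\alpha<\beta\).

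This establishes (ii); the symmetric argument, starting from the IH-disjunction ``\(\beta\leq\alpha_i\) or \(\alpha_i<\beta\)'' indexed by \(i\in\In_\alpha\), yields (i). The main obstacle is the reduction to a form of \LPO\ that applies: one must carefully exploit the decidability of each IH-disjunction (which is what lets us produce the binary sequence \((u_j)\) constructively) and handle separately the two shapes of \indexors in \(\fF_2\). Once this case split is set up, everything else is routine.
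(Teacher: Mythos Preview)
Your proof is correct and follows exactly the approach the paper indicates: its own proof is the single line ``Proceed as for \cref{propOrdclass}, in the countable case,'' and you have spelled out precisely how \LPO\ replaces the \LEM\ step, including the case split on whether \(\In_\beta\) is an \(\NN_k\) or \(\NN\) and the passage to a nondecreasing binary sequence so as to match the paper's formulation of \LPO\ in \cref{exa123}.
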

\begin{proof} Proceed as for \cref{propOrdclass}, in the countable case.
\end{proof}
%

%
%

\subsection{Comparison with Martin-Löf ordinals}\label{subsecOML}

We present a variation of the theory of ordinals in the book
\emph{Notes on Constructive Mathematics} \citep[Chapter 3]{PML}. We write ``variation'' since Martin-Löf's
theory is formulated in the framework of Markov's recursive mathematics,
while we take as primitive intuitionistic logic with generalised inductive definitions,
as does the work \citealt{heyting61} (the fact that this setting can provide
a more elegant treatment than the one in recursive mathematics is stressed in \citeauthor{kreisel63}'s
review (\citeyear{kreisel63}) of this work).

\subsubsection{Martin-Löf's formal system}

In this system, ordinals are described inductively: if we have a finite or infinite sequence
of ordinals \(\sigma = \sigma_0,\dots,\sigma_n,\dots\) (maybe empty), then \(\suc(\sigma)\) is an ordinal.

The (classical) semantics of this operation is the following: to \((\sigma_n)\) sequence of ordinals
we associate the supremum of the sequence of the successors of the~\(\sigma_n\)'s.

In particular, \(\und 0\) is defined as \(\suc(\sigma)\), where \(\sigma\) is the empty sequence.

 We write simply \(\suc(\alpha)\) for \(\suc(\sigma)\), where \(\sigma\) is the sequence with one element \(\sigma_0 = \alpha\).

In constructive mathematics, the set of all such ordinals is an example of a nondiscrete set.

As stated in the introduction, to any ordinal \(\alpha\) we associate, by induction on~\(\alpha\), a tree \(\Tree(\alpha)\)
:
\(\Tree(\alpha)\) always contains the empty sequence, and \(\Tree(\suc(\sigma))\) contains \(n\cct \ell\) if
\(\ell\) is in \(\Tree(\sigma_n)\).

This set \(\Tree(\alpha)\) does not contain any infinite branch: if \(f\) is a numerical function,
we can always find \(n\) such that \([f(0),\dots,f(n-1)]\) is not in \(\Tree(\alpha)\). This is proved
directly by induction on \(\alpha\). In other words, the tree \(\Tree(\alpha)\) is well-founded.
The fact that we get in this way all well-founded trees is the content of Brouwer's
bar theorem, which holds neither in Bishop's set theory nor in dependent type theory.
This follows from the fact that both
systems have an interpretation in recursive mathematics, while the bar theorem does
not hold in recursive mathematics, as shown by an example due to Kleene \citep[see][]{kleenevesley65}. 

We define next what an atomic formula is: a formula of the form \(\alpha<\beta\) or \(\alpha\leqslant \beta\).

Finally, we can define when a sequent \(\Gamma\) is provable, where \(\Gamma\) is a finite \emph{set}
of atomic formulae. The formulation is quite elegant!
\[
  \frac{\Gamma,\alpha\leqslant \sigma_n}{\Gamma,\alpha<\suc(\sigma)} ~~~~~~~~
\frac{\cdots\ \Gamma,\sigma_n<\beta\ \cdots}{\Gamma,\suc(\sigma)\leqslant \beta}
\] 
Note that there is a direct proof of  \(\und0\leqslant \beta\) by the second rule.

The intuitive meaning of a sequent is the classical disjunction of the atomic formulae it contains.

Martin-Löf then defines an equivalence relation $\alpha=_\mathrm{ML} \beta$ on $\ord_2$ as expressing the fact that the sequents $\alpha\leq \beta$ and $\beta\leq \alpha$ are valid. The set of Martin-Löf ordinals, denoted by~$\Ord_2^\mathrm{ML}$, is the quotient of $\ord_2$ by this equivalence relation.

Martin-Löf proves for instance the sequent \(\alpha<\beta,\beta\leqslant \alpha\) by induction on~\(\beta\) and~\(\alpha\).
He also shows that the following rule is admissible by induction on \(\alpha\):
\[
  \frac{\Gamma,\alpha<\alpha}{\Gamma}\text,
\] 
which implies in particular that \(\alpha<\alpha\) is not provable.

Let us give an example of such proofs by induction.

\begin{lemma}
 The sequents \(\alpha\leqslant \alpha\) and  \(\alpha<\suc(\alpha)\) are provable for all \(\alpha\).
\end{lemma}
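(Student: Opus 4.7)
The plan is to prove the two sequents by induction on the ordinal~\(a\), handling \(a\leqslant a\) first, since \(a<\suc(a)\) then falls out with a single application of the first rule.

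For \(a\leqslant a\), write \(a=\suc(u)\) with \(u=u_0,u_1,\dots\), which is possible because every ordinal in Martin-Löf's system arises in exactly this way. The inductive hypothesis gives \(u_n\leqslant u_n\) for each~\(n\). Applying the first rule
\[
  \frac{\Gamma,a'\leqslant u'_n}{\Gamma,a'<\suc(u')}
\]
with \(a'=u_n\), \(u'=u\), and the index chosen to be \(n\) itself, we deduce \(u_n<\suc(u)\), that is, \(u_n<a\), for every~\(n\). Feeding all these premises into the second rule
\[
  \frac{\cdots\ \Gamma,u_n<b\ \cdots}{\Gamma,\suc(u)\leqslant b}
\]
with \(b=a\) yields \(\suc(u)\leqslant a\), i.e.\ \(a\leqslant a\).

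For \(a<\suc(a)\), view \(\suc(a)\) as \(\suc(v)\) where \(v\) is the one-element sequence with \(v_0=a\). A single application of the first rule then reduces \(a<\suc(v)\) to \(a\leqslant v_0=a\), which is what we have just proved.

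There is no real obstacle here: the only point requiring a little care is to instantiate the first rule correctly for the inductive step, picking the \(n\)-th component of~\(u\) as the witness~\(u_n\) in the rule. Once this is done, the second rule simultaneously consumes all the premises \(u_n<a\) to close the argument.
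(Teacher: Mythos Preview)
Your proof is correct and follows essentially the same approach as the paper: prove \(a\leqslant a\) by induction on \(a\), reducing via the second rule to \(u_n<\suc(u)\) and then via the first rule to the induction hypothesis \(u_n\leqslant u_n\); then obtain \(a<\suc(a)\) by a single application of the first rule. The paper's version is just a bit more terse.
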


\begin{proof}
  We prove \(\alpha\leqslant \alpha\) by induction on \(\alpha\). If \(\alpha = \suc(\sigma)\), we have to show
  \(\sigma_n<\suc(\sigma)\) for all \(n\), which follows from \(\sigma_n\leqslant \sigma_n\), which holds by induction.

  It follows that we have \(\alpha<\suc(\alpha)\) by the first derivation rule.
\end{proof}

Martin-Löf also proves the analogue of \cref{thOrd2} for $\Ord_2^\mathrm{ML}$. But the two statements, for $\Ord_2^\mathrm{ML}$ and for our $\Ord_2$, are independent of each other.

\subsubsection{Comparison with our system}
\label{sec:comparison}

Let us explain now why this definition does not coincide with ours by giving an example
of the form \(\alpha<\beta\) which is provable in this sequent calculus but implies \LPO\ in our system.

Let us return to \cref{exa123}: define \(\alpha = \suc( \sigma)\),
where \(\sigma_n=\und{u_n}\) with \((u_n)\) a nondecreasing sequence
of~\(0\)'s and~\(1\)'s, and \(\beta = \suc( \tau)\), where \(\tau_n=s(\sigma_n)=\und{u_n+1}\).

\begin{lemma}
The sequent \(\alpha<\beta\) is provable.
\end{lemma}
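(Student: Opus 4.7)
The plan is to exhibit an explicit derivation of $\{a < b\}$ using only Martin-L\"of's two rules, locating along the way the non-constructive step that, as \cref{exa123} makes clear, corresponds to \LPO\ in the framework of \cref{secEpto}.

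Since $b = \suc(\und v)$, the only rule that can apply to the conclusion $\{a < b\}$ is the first one: I would pick an index $N \in \NN$ and reduce to the sequent $\{a \leqslant \und{v_N}\}$. The whole difficulty lives here, since I need to choose $N$ so that $u_N$ dominates the whole sequence $(u_m)$; classically such an $N$ always exists — take $N = 0$ if all $u_n = 0$, and any $m$ with $u_m = 1$ otherwise — and once such an $N$ is chosen the rest of the derivation can be written down uniformly.

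Once $N$ is fixed, the remainder is forced. I would unfold $\und{v_N} = \suc(\und{u_N})$ and apply the second (infinitary) rule to $\{\suc(\und u) \leqslant \suc(\und{u_N})\}$, reducing it to the premises $\{\und{u_m} < \suc(\und{u_N})\}$ indexed by $m \in \NN$. Each such premise is then reduced, by the first rule with its only available index $0$, to $\{\und{u_m} \leqslant \und{u_N}\}$, and this holds because $u_m \leqslant u_N$ in $\NN$ by the choice of $N$. For natural numbers $j \leqslant k$ the sequent $\{\und j \leqslant \und k\}$ is derived by a routine induction on $j$ using only the axiomatic $\{\und 0 \leqslant c\}$ (second rule with an empty premise set) together with the inductive proof of $\{c \leqslant c\}$ recalled just above the lemma.

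The main obstacle, as anticipated, is not in any of these finite derivations but in the very commitment to a single $N$ dominating the entire sequence $(u_m)$ — exactly what makes $a < b$ provable here while being equivalent to \LPO\ in the system of \cref{secEpto}.
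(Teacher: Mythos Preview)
Your derivation has a genuine gap, and it stems from a misreading of what the lemma is asserting. The claim is that \(a<b\) is \emph{constructively} provable in Martin-L\"of's sequent calculus; the surrounding discussion then contrasts this with the authors' system, where \(a<b\) implies \LPO. So the metatheoretic proof of provability must itself be effective: no appeal to a classically existing \(N\) dominating the whole sequence is allowed. Your proposal explicitly relies on such an \(N\) and flags the choice as ``the obstacle''---but then it is not a proof of the lemma at all, only a rediscovery of the \LPO\ content on the authors' side.

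What you are missing is the one device that makes Martin-L\"of's calculus strictly stronger here: sequents are finite \emph{sets}, so contraction is built in and you may keep the target formula in the context while acting on it. Concretely, to prove \(\{a<\suc(\und v)\}\) you apply the first rule with \(\Gamma=\{a<\suc(\und v)\}\) and index \(0\), reducing to \(\{a<\suc(\und v),\,a\leqslant\und{v_0}\}\). Now apply the second rule to \(a=\suc(\und u)\): for each \(n\) you must prove \(\{a<\suc(\und v),\,\und{u_n}<\und{v_0}\}\). At this point you perform a \emph{decidable} case split on the concrete numbers \(u_n\) and \(v_0=u_0+1\). If \(u_n<v_0\), the second disjunct already holds. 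If not, then \(u_n=1\), hence \(u_m\leqslant u_n\) for all \(m\) since the sequence is nondecreasing; you now go back to the formula \(a<\suc(\und v)\) still sitting in the context, apply the first rule with index \(n\), and finish via \(a\leqslant\und{v_n}\) since every \(\und{u_m}<\und{v_n}\). No global \(N\) is ever chosen; the ``disjunctive'' sequent lets you defer the choice pointwise and decide it by inspecting finite data. This is exactly the phenomenon the paper wants to exhibit, and the remark following the proof in the paper stresses precisely this point.
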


\begin{proof}
By the first rule, it is enough to show
\(\alpha<\suc( \tau),\alpha\leqslant\tau_0\). And for this we have to show
\(\sigma_n<\tau_0, \alpha<\suc(\tau)\) for all \(n\).
We fix \(n\) and we show
\( \sigma_n< \tau_0, \alpha<\suc( \tau)\).

If we do have \( \sigma_n<\tau_0=\suc( \sigma_0)\), this is fine. Note
that we can \emph{test} whether or not \(\sigma_n<\tau_0\) holds since both
\(\sigma_n\) and \(\tau_0\) are of the form~\(\und0\) or~\(\und1\) or~\(\und2\).

Otherwise, we get explicitly \(n\) such that
\( \sigma_n\geqslant\suc(\sigma_0)\) and we have then
\(\sigma_m\leqslant \sigma_n\) and so \(\sigma_m< \tau_n\) for all~\(m\).
We can prove \( \sigma_n< \tau_0,\alpha<\suc( \tau)\) by
\(\sigma_n< \tau_0, \alpha\leqslant\tau_n\)
which holds since
\(\sigma_n<\tau_0,  \sigma_m< \tau_n\)
holds for all~\(m\).
\end{proof}

  Note that we prove \(\alpha<\suc( \tau)\) by proving \(\alpha<\suc( \tau),\alpha\leqslant  \tau_0\), and
  we have to ``keep'' \(\alpha<\suc( \tau)\): maybe \(\alpha\leqslant  \tau_0\) does not hold
  (it may happen that the sequence~\((\sigma_n)\) takes the value \(\und1\) and \(\tau_0 = \und1\)).


  In \cref{exa123}, we note that \(\alpha<\beta\) implies \LPO\ in our system.
Therefore, in the set \(\Ord_2^\mathrm{ML}\) of Martin-Löf ordinals, the equality is coarser than in the set~\(\Ord_2\), though both are based on the  set  \(\ord_2\).

\addcontentsline{toc}{section}{References}

\small

\bibliographystyle{plainnat}
\markboth{References}{References}


\normalsize
\endgroup
\stopcontents[english]


\clearpage
\newpage
\thispagestyle{empty}


\clearpage
\newpage ~
\newpage

\setcounter{page}{0}
\setcounter{figure}{0}
\setcounter{page}{1} 
\setcounter{section}{0}
\setcounter{subsection}{0}
\setcounter{equation}{0}

\renewcommand\thepage{F\arabic{page}}\renewcommand\theHsection{F\arabic{section}}\renewcommand\theHfigure{F\arabic{figure}}


\rdb
\label{beginfrench}
\selectlanguage{french}

\renewcommand{\wkn}{{\tt affaiblissement}}%

\renewcommand{\crefrangeconjunction}{ à\nobreakspace}%
\renewcommand\crefrangepreconjunction{}%
\renewcommand\crefrangepostconjunction{}%
\renewcommand{\crefpairconjunction}{ et\nobreakspace}%
\renewcommand{\crefmiddleconjunction}{, }%
\renewcommand{\creflastconjunction}{ et\nobreakspace}%
\renewcommand{\crefpairgroupconjunction}{ et\nobreakspace}%
\renewcommand{\crefmiddlegroupconjunction}{, }%
\renewcommand{\creflastgroupconjunction}{ et\nobreakspace}%

\crefname{equation}{}{}
\creflabelformat{inequality}{(#2#1#3)}
\crefname{section}{section}{sections}

\theoremstyle{plain}
\newtheorem{ftheorem}{Théorème}[section]
\newtheorem{flemma}[ftheorem]{Lemme}
\newtheorem{fcorollary}[ftheorem]{Corolaire}
\newtheorem{fproposition}[ftheorem]{Proposition}
\newtheorem{fpropdef}[ftheorem]{Proposition et définition}
\newtheorem{ffact}[ftheorem]{Fait}
\newtheorem*{fpropriétésofindexors}{Propriétés pour l'ensemble des \Fords}
\newtheorem*{faxiomsfororders}{Axiomes pour les \Fords}
\theoremstyle{definition}
\newtheorem{fdefinition}[ftheorem]{Définition}
\newtheorem*{fconventions}{Conventions}
\theoremstyle{remark}
\newtheorem{fremark}[ftheorem]{Remarque}
\newtheorem*{fcomments}{Commentaire}
\newtheorem{fexample}[ftheorem]{Exemple}

\def\frenchproofname{\textsl{Démonstration}}


\newcounter{MF}
\newcommand\stMF{\stepcounter{MF}}

\newcommand{\lec}{\stMF\ifodd\value{MF}lecteur\xspace \else 
lectrice\xspace \fi}

\newcommand{\lecs}{\stMF\ifodd\value{MF}lecteurs\xspace \else 
lectrices\xspace \fi}

\newcommand{\alec}{\stMF\ifodd\value{MF}au lecteur\xspace \else%
à la lectrice\xspace \fi}

\newcommand{\dlec}{\stMF\ifodd\value{MF}du lecteur\xspace \else%
de la lectrice\xspace \fi}

\newcommand{\llec}{\stMF\ifodd\value{MF}le lecteur\xspace \else la lectrice\xspace \fi}

\newcommand{\Llec}{\stMF\ifodd\value{MF}Le lecteur\xspace \else La lectrice\xspace \fi}

\newcommand{\lui}{\ifodd\value{MF}lui\xspace \else
elle\xspace \fi}

\newcommand{\celui}{\ifodd\value{MF}celui\xspace \else
celle\xspace \fi}

\newcommand{\ceux}{\ifodd\value{MF}ceux\xspace \else
celles\xspace \fi}

\newcommand{\er}{\ifodd\value{MF}er\xspace \else
ère\xspace \fi}

\newcommand{\eux}{\ifodd\value{MF}eux\xspace \else
elles\xspace \fi}

\newcommand{\eUx}{\ifodd\value{MF}eux\xspace \else
euse\xspace \fi}

\newcommand{\leux}{\ifodd\value{MF}leux\xspace \else
leuse\xspace \fi}

\newcommand{\il}{\ifodd\value{MF}il\xspace \else
elle\xspace \fi}

\newcommand{\e}{\ifodd\value{MF} \else e\xspace \fi}

\newcommand{\n}{\ifodd\value{MF}n\xspace \else nne\xspace \fi}

\makeatletter
\newcommand{\la}{\@ifstar{\ifodd\value{MF}le\xspace\else
la\xspace\fi}{\stMF\ifodd\value{MF}le\xspace \else la\xspace \fi}}
\makeatother

\renewcommand\indexor{indexeur\xspace}
\renewcommand\indexors{indexeurs\xspace}
\renewcommand\Indexors{Indexeurss\xspace}

\newcommand \inxr {\indexor}
\newcommand \inxrs {\indexors}
\newcommand \Inxr {\indexor}

\newcommand \Ford {\(\fF\)-ordre\xspace} 
\newcommand \Fords {\(\fF\)-ordres\xspace} 

\newcommand \cara {caracté\-ristique\xspace} 
\newcommand \caras {caracté\-ristiques\xspace} 

\newcommand \dem {démonstration\xspace} 
\newcommand \dems {démonstrations\xspace} 

\newcommand \dfn {définition\xspace} 
\newcommand \dfns {définitions\xspace} 

\newcommand \egt {égalité\xspace} 
\newcommand \egts {égalités\xspace} 
\newcommand \egmt {également\xspace} 

\newcommand \elr {élémentaire\xspace} 
\newcommand \elrs {élémentaires\xspace} 

\newcommand \elt {élément\xspace} 
\newcommand \elts {éléments\xspace}

\newcommand \eqvc {équivalence\xspace}
\newcommand \eqvcs {équivalences\xspace}

\newcommand \imd {immédiat\xspace} 
\newcommand \imde {immédiate\xspace} 
\newcommand \imds {immédiats\xspace} 
\newcommand \imdes {immédiates\xspace} 

\newcommand \prt {propriété\xspace} 
\newcommand \prts {propriétés\xspace} 

\newcommand \ncr {nécessaire\xspace} 
\newcommand \ncrs {nécessaires\xspace} 
\newcommand \ncrt {nécessairement\xspace}

\newcommand \ari {arithmétique\xspace} 
\newcommand \aris {arithmétiques\xspace} 

\newcommand \cof {constructif\xspace} 
\newcommand \cofs {constructifs\xspace} 
\newcommand \cost {constructivement\xspace} 
\newcommand \cov {constructive\xspace} 
\newcommand \covs {constructives\xspace} 
 
\newcommand \coma {mathé\-ma\-tiques \covs} 
\newcommand \clama {mathé\-ma\-tiques classiques\xspace} 

\newcommand \recu {récurrence\xspace} 

\newcommand \ssi {si, et seulement si, } 

\newcommand \ML{\mathrm{ML}}

\newcommand \bleu[1]{\textcolor{blue}{#1}}

\renewcommand \ept[1]{\(#1\)-ordre\xspace}
\renewcommand \epts[1]{\(#1\)-ordres\xspace}

\FrenchFootnotes

\title{Une théorie constructive des ordinaux}

\newcommand\oge{\leavevmode\raise.3ex\hbox{$\scriptscriptstyle\langle\!\langle\,$}}
\newcommand\feg{\leavevmode\raise.3ex\hbox{$\scriptscriptstyle\,\rangle\!\rangle$}}
\renewcommand\gui[1]{\oge{#1}\feg}

\thickmuskip = 7mu plus 2mu

\pagestyle{headings}
\patchcmd{\sectionmark}{\MakeUppercase}{}{}{}

\stMF
\startcontents[french]

\author{Thierry Coquand, Henri Lombardi, Stefan Neuwirth}
\maketitle

\rdb
\label{fbeginfrench}

\begin{abstract} 
\citet{fPML} décrit des ordinaux construits de manière récursive.
Il donne une version constructivement acceptable des ordinaux calculables de Kleene.
En fait, la définition de Turing des fonctions calculables n’est pas nécessaire d’un point de vue constructif.
Nous donnons dans cet article une théorie constructive des ordinaux similaire à la théorie de Martin-Löf, mais basée uniquement sur les deux relations \gui{\(x\leq y\)} et \gui{\(x< y\)}, c'est-à-dire sans considérer les séquents dont le sens intuitif est une disjonction classique. Dans notre cadre, l’opération \gui{supremum d'une famille d'ordinaux} joue un rôle important à travers ses interactions avec les relations \gui{\(x\leq y\)} et \gui{\(x< y\)}. Cela permet d'approcher autant que possible la notion d'ordre total lorsque la propriété \gui{\(\alpha\leq \beta\) ou \(\beta\leq \alpha\)} n'est prouvable qu'en logique classique.
Notre objectif est de donner une définition formelle correspondant à l'intuition et de démontrer que nos ordinaux constructifs satisfont de manière constructive toutes les propriétés souhaitables.
Notons qu'en ajoutant la logique classique, on retrouverait les ordinaux des \clama usuelles, au prix d'une perte de la calculabilité pour la plupart des énoncés donnés sous la forme usuelle.
\end{abstract}

\noindent {\bf Mots clés:} 
nombre ordinal; mathématiques constructives. 

\smallskip \noindent {\bf MSC2020:} 03E10 03F65.

\small

\setcounter{tocdepth}{4}
\markboth{Table des matières}{Table des matières}

\printcontents[french]{}{1}{}
\normalsize

\newpage

\section{Introduction}

Ce papier est écrit dans le cadre des \coma informelles.
Nous utilisons la théorie des ensembles de Bishop enrichie par les définitions inductives généralisées (Bishop a utilisé ces sortes de constructions pour la théorie de la mesure, les ensembles boréliens et la théorie de l'intégrale de Lebesgue).

En mathématiques classiques, une définition naturelle d'un ordinal est d'être le type d'ordre d'un ensemble bien ordonné (voir par exemple  \citealp[III.2.Ex.14]{fbourbaki68}).
Néanmoins, il est plus pratique d'utiliser les ordinaux de von Neumann, pour lesquels de nombreux résultats peuvent être prouvés sans recourir au choix (voir par exemple \citealp[Chapitre~2]{fKri} et \citealp[Chapitre~II]{fdehornoy17}).

Proposons maintenant une approche constructive.
Une relation binaire \(<\) sur un ensemble \(X\) est dite \emph{bien fondée} si pour toute famille d'ensembles \((E_x)_{x\in X}\) indexée par \(X\) il est possible de construire des éléments de \(\prod_{x\in X}E_x\) par \(<\)-induction. Précisément, chaque fois que l'on donne une construction 
\(\gamma\) qui à partir d'un élément \(a\in X\) et d'un élément 
\(\varphi\in\prod_{x\in X,x<a}E_x\) construit un élément \(\gamma(a,\varphi)\in E_a\), il existe un unique \(\Phi\in \prod_{x\in X}E_x\) tel que pour tout \(a\in X \) nous avons \(\Phi(a)=\gamma(a,\Phi|_{x\in X,x<a})\).
Cette notion a une signification constructive claire.

En particulier, considérons une \prt pour les éléments de \(X\). Si la \prt est \(<\)-héréditaire, c'est-à-dire  si elle est vraie pour \(a\in X\) dès qu'elle est vraie pour tout \(x\in X\) tel que \(x<a\), alors cette \prt est vraie pour tous les éléments de~\(X\).

En mathématiques constructives, le livre \citet*[Section~I.6]{fMRR} définit la notion de relation bien fondée d'une manière différente mais équivalente et un ordinal comme un ensemble totalement ordonné pour lequel la relation d'ordre est bien fondée. Ainsi, tous les sous-ensembles de \(\NN\) sont des ordinaux même si nous ne savons pas s'ils ont un plus petit élément.

L'\citet[Section 10.3]{fhottbook}  considère les \gui{ordinaux de Grayson}  (voir \citealp*[Exercise~I.6.12]{fMRR}) dans le cadre de la théorie homotopique des types univalente; les ordinaux d'un univers donné s'avèrent former un ensemble (et non un groupoïde). Cette théorie des ordinaux diffère de la nôtre en ce qui concerne les \cref{faxsuccunaire,faxsuccunaire2} pour les \Fords dans ce qui suit.

Entre autres points de vue constructifs, il existe des descriptions d'ordinaux dénombrables construits par induction dans les ouvrages \citealt{fbrouwer26}, \citealt{fgentzen36}, \citealt{fchurch38}, \citealt{fkleene38}, \citealt{fheyting61} et \citealt[Chapitre 3]{fPML}.

Un traitement constructif des ordinaux de von Neumann basé sur la récursion transfinie est donné par \citet[Section 9.4]{faczelrathjen10}.

Brouwer propose une construction inductive basée sur l'idée que lorsque les ordinaux \(\alpha_n\) sont définis pour tout \(n\in\NN\) et sont des ensembles bien fondés totalement ordonnés, alors on peut décrire l'ordinal \(\alpha \) qui correspond intuitivement à \(\alpha_1\) suivi de \(\alpha_2\) suivi de \(\alpha_3\) suivi de\dots L'ensemble ordonné \(\alpha\) défini par Brouwer sera à nouveau un ensemble bien fondé totalement ordonné. Et si la relation d'ordre sur chaque \(\alpha_i\) est décidable, il en va de même pour \(\alpha\).

Deux ordinaux de Brouwer ne sont en général pas comparables (en logique intuitionniste): il n'existe pas de critère général permettant de décider si deux ordinaux ont le même type d'ordre, et, lorsque ce n'est pas le cas, lequel est isomorphe à un segment initial de l'autre.

L'article \citealt*{fkrausnordvallxu21} compare trois approches constructives distinctes des ordinaux constructifs, notées \textsf{Cnf}, \textsf{Brw} et \textsf{Ord}, qui sont disponibles dans le cadre de la théorie des types univalente. L'approche \textsf{Brw} est directement inspirée des ordinaux de Brouwer.

\smallskip 
Martin-Löf décrit  les ordinaux \gui{récursivement construits}. Il donne ainsi une version \cost acceptable des ordinaux dénombrables récursifs tels que définis en \clama par Kleene.
Intuitivement, un ordinal construit à la Martin-Löf est défini par induction au moyen des deux constructions suivantes:
\begin{itemize}
\item on a un ordinal minimum $\und 0$;
\item si $(\alpha_n)_{n\in\NN}$ est une suite explicite, finie ou infinie, d'ordinaux, la borne supérieure des $\alpha_n+1$ est un ordinal\footnote{D'une manière  qui nous semble peu intuitive, Martin-Löf note cette borne supérieure 
$\sup_{n\in\NN}(\alpha_n)$. Cela permet de tenir compte du cas de la suite vide d'ordinaux, qui a pour borne supérieure l'ordinal $\und 0$. Mais hormis ce cas, il s'agit bien de la borne supérieure des successeurs des $\alpha_n$. Nous préfèrerons donc la notation $\suc_{n\in\NN}(\alpha_n)$.}. 
\end{itemize}
Dire que la définition est inductive, c'est dire que tout ordinal est construit en utilisant les règles indiquées.

Dans un cadre constructif, nous pouvons abandonner les machines de Turing et remplacer la calculabilité de Turing par la calculabilité intuitive (non définie).
Dans ce cas, la principale différence entre les ordinaux de Brouwer et ceux de Martin-Löf est que les ordinaux de Martin-Löf, étant définis de manière \gui{parallèle} plutôt que de manière \gui{séquentielle}, sont plus généraux: pour toute suite d'ordinaux bien définis \((\alpha_n)\) on peut construire le supremum des successeurs des ordinaux \(\alpha_n\). Un inconvénient est qu’il n’existe aucun moyen d’associer à un ordinal de Martin-Löf un ensemble bien fondé totalement ordonné avec le même type d’ordre.
Par exemple, si les \(\alpha_n\) sont tous égaux à \(\und 0\) ou \(\und 1\), il est à priori impossible de décider si le supremum des successeurs des \(\alpha_n \) est égal à \(\und 1\) ou \(\und 2\).

\subsection*{Les ordinaux vus comme des arbres}

Martin-Löf propose de visualiser un ordinal~\(\alpha\) comme un arbre bien fondé avec des branchements finis ou dénombrables. L'ordinal~\(\alpha\) est donné avec un ensemble d'indexeurs noté~\(\In_\alpha\); dans la suite, ce sera un élément de l'ensemble \(\fF_2\) constitué de~\(\NN\) et de ses sous-ensembles finis~\(\NN_k\).
\begin{itemize}
\item L'arbre avec seulement sa racine représente \(\und 0\).
\item Si \((t_i)_{i\in \In_{\alpha}}\) est une famille d'arbres ordinaux
pour une famille d'ordinaux \((\alpha_i)_{i\in \In_{\alpha}}\), le supremum \(\alpha=\s{\alpha_i}{i\in \In_{\alpha}} \) des successeurs des \(\alpha_i\) est donné par l'arbre ordinal pour lequel il y a \(\#\In_\alpha\) branches  au-dessus de la racine et une copie de \(t_i\) est jointe à la branche indexée par \(i\in\In_\alpha\).
\end{itemize}

Considérez les arbres de la figure~\ref{ffigure}.

Si \(n\in\NN\), l'ordinal \(\und n\) peut être représenté par l'arbre avec 
\(n\)~branchements unaires successifs aux \(n\)~nœuds, de sorte qu'il a
\(n+1\)~nœuds.

Le premier ordinal infini \(\omega\) peut être représenté par l'arbre qui a un embranchement dénombrable au-dessus de la racine, les branches portant les arbres précédents (représentant \(\und n\), \(n\in\NN\)).

Son successeur, noté \(\omega+\und1\), peut être représenté par l'arbre à ramification unaire au-dessus de la racine, la branche portant l'arbre précédent.

L'ordinal \(\omega+\und2\) peut être représenté par l'arbre à ramification unaire au-dessus de la racine, la branche portant l'arbre précédent.

L'ordinal \(\omega+\omega\) peut être représenté par l'arbre qui a une ramification dénombrable au-dessus de la racine, les branches portant les arbres représentant \(\omega+\und n\), \(n\in\NN\).%
\begin{figure}[!h]
  \centering
  \includegraphics[width=\textwidth,trim=4 4 6 5
  ]{trees1200}
  \caption{Arbres ordinaux.}
  \label{ffigure}
\end{figure}

Plus formellement, un tel arbre peut être défini comme l'ensemble de ses nœuds, ou points de branchement, convenablement nommés. On peut considérer l'ensemble \(\Lst(\NN)\) de listes finies d'éléments de \(\NN\).
Soit \(n\in\NN\) et \(\ell\), \(\ell'\in\Lst(\NN)\). On note \(n\cct \ell\) la liste \([n,\ell_1,\dots,\ell_k]\), où \(\ell=[\ell_1,\dots,\ell_k]\),  \(\ell\cct n\) la liste \([\ell_1,\dots,\ell_k,n]\), et \(\ell\cct \ell'\) la concaténation des listes \(\ell\) et \(\ell'\).

On remarque que \(\Lst(\NN)\) peut être énuméré de manière naturelle\footnote{Par exemple, pour \(\ell=[\ell_1,\dots,\ell_k]\in\Lst(\NN) \), on pose \(\mu(\ell)=\sum_{i=1}^{k}(\ell_i+1)\) et on énumère les listes par \(\mu(\ell)\) croissants.} et que la notion de famille \(\NN\)-indexée dans \(\Lst(\NN)\) correspond, via une telle énumération, à la notion de base (non définie) de fonction de~\(\NN \) dans~\(\NN\).

Un arbre bien fondé avec des branchements finis ou dénombrables peut alors être décrit comme un sous-ensemble détachable~\(T\) de \(\Lst(\NN)\) qui est construit inductivement selon le processus indiqué précédemment.
\(T\) est clos par segments initiaux: si \(\ell\in\Lst(\NN)\), \(p\in\NN\) et \(\ell\cct p\in T\), alors \(\ell\in T\).
Ainsi, à chaque ordinal \(\alpha\), nous associons un arbre, défini comme un sous-ensemble approprié de \(\Lst(\NN)\), noté \(\Tree(\alpha)\).

Si \(n\in\NN\), l'ordinal \(\und n\) peut être décrit par la suite finie de \(n+1\) listes \([\,]\), \([0] \), \([0,0]\), \(\dots\)\kern.16667em, \([0,\dots ,0]\).

Le premier ordinal infini \(\omega\) peut être décrit par le sous-ensemble de \(\Lst(\NN)\) énuméré par la suite infinie \([\,]\), \([0]\), \( [1]\), \([1,0]\), \([2]\), \([2,0]\), \([2,0,0]\), \([ 3]\), \([3,0]\), \([3,0,0]\), \([3,0,0,0]\), etc.

L'ordinal \(\omega+\und1\) peut être décrit par la suite infinie \([\,]\), \([0]\), \([0,0]\), \([0,1 ]\), \([0,1,0]\), \([0,2]\), \([0,2,0]\), \([0,2,0,0]\), \([0,3]\), \([0,3,0]\), \([0,3,0,0]\), \([0,3,0,0,0 ]\), etc.

L'ordinal \(\omega+\und2\) peut être décrit par la suite infinie \([\,]\), \([0]\), \([0,0]\), \([0,0 ,0]\), \([0,0,1]\), \([0,0,1,0]\), \([0,0,2]\), \([0,0 ,2,0]\), \([0,0,2,0,0]\), \([0,0,3]\), \([0,0,3,0]\), \([0,0,3,0,0]\), \([0,0,3,0,0,0]\), etc.

\begin{sloppypar}
L'ordinal \(\omega+\omega\) peut être décrit par la suite doublement infinie \([\,]\), \([0]\), \([0,0]\), \([0, 1]\), \([0,1,0]\), \([0,2]\), \([0,2,0]\), \([0,2,0,0] \), \([0,3]\), \([0,3,0]\), \([0,3,0,0]\), \([0,3,0,0, 0]\), etc., \([1]\), \([1,0]\), \([1,0,0]\), \([1,0,1]\), \([1,0,1,0]\), \([1,0,2]\), \([1,0,2,0]\), \([1,0,2,0 ,0]\), \([1,0,3]\), \([1,0,3,0]\), \([1,0,3,0,0]\), \( [1,0,3,0,0,0]\), etc., \([2]\), \([2,0]\), \([2,0,0]\), \( [2,0,0,0]\), \([2,0,0,1]\), \([2,0,0,1,0]\), \([2,0, 0,2]\), \([2,0,0,2,0]\), \([2,0,0,2,0,0]\), \([2,0,0, 3]\), \([2,0,0,3,0]\), \([2,0,0,3,0,0]\), \([2,0,0,3, 0,0,0]\), etc., etc.
\end{sloppypar}

Ces arbres, vus comme des sous-ensembles définis par induction de $\Lst(\NN)$, forment un ensemble bien défini dans le contexte des \coma intuitives. On peut le noter $\ord_2$ (voir la \cref{fdeford}). 
Notons que d'un point de vue constructif, $\ord_2$ est un ensemble discret \ssi le principe de Markov est valide.
Cet ensemble $\ord_2$  est un \gui{ensemble des noms d'ordinaux} chez Martin-Löf. Et l'ensemble des ordinaux de Martin-Löf, \(\Ord_2^\ML\),  est un quotient de $\ord_2$ par une relation d'\eqvc correctement démontrée. L'ensemble \(\Ord_2^\mathrm{ML}\) et notre ensemble \(\Ord_2\) sont discrets \ssi le petit principe d'omniscience \LPO\ est valide.  Voir la section \ref{fsubsecOML} pour plus de précisions.

\medskip \centerline{*\ *\ *}\smallskip

Nous donnons dans cet article une théorie constructive des ordinaux similaire à la théorie de Martin-Löf, mais basée uniquement sur les deux relations \gui{\(x\leq y\)} et \gui{\(x< y\)}, c'est-à-dire sans considérer les séquents dont le sens intuitif est une disjonction classique. Dans notre cadre, l’opération \gui{supremum d'une famille d'ordinaux} joue un rôle important à travers ses interactions avec les relations \gui{\(x\leq y\)} et \gui{\(x< y\)}. Cela permet d'approcher autant que possible la notion d'ordre total lorsque la propriété \gui{\(\alpha\leq \beta\) ou \(\beta\leq \alpha\)} n'est prouvable qu'en logique classique.
Notre objectif est de donner une définition formelle correspondant à l'intuition et de démontrer que nos ordinaux constructifs satisfont de manière constructive toutes les propriétés souhaitables.
Notons qu'en ajoutant la logique classique, on retrouverait les ordinaux des \clama usuelles, au prix d'une perte de la calculabilité pour la plupart des énoncés donnés sous la forme usuelle.

\smallskip\centerline{*\ *\ *}\smallskip

La première étape dans la \cref{fsecEpto} est de décrire ces propriétés souhaitables.  

\hum{À COMPLÉTER}

\section{Ensembles presque totalement ordonnés associés à un ensemble d'\inxrs}\label{fsecEpto}
Nous définissons dans cette section la structure d'ordre (presque) total
associée à un ensemble~\(\fF\) d'\inxrs: la structure de \Ford en abrégé.

\subsection{Ensemble d'\inxrs} 
Nous avons besoin pour cela d'un ensemble \(\fF\) d'\inxrs. Un \inxr sera noté \(I\), \(J\), \(K\),  \(I'\), \(I''\), \(J'\),  \(I_a\), \(I_b\), etc. 

Un \emph{\inxr} est simplement un ensemble qui peut servir d'ensemble d'indices pour les familles que l'on va considérer. Dans la suite, un sous-ensemble finiment énuméré de \(A\) est toujours un sous-ensemble de \(A\) défini à la Bishop par une application \(\NN_k\to A\). Si \(A\) est discret, un sous-ensemble finiment énuméré de \(A\)
est un sous-ensemble détachable.

\begin{fpropriétésofindexors}
 On supposera que 
\begin{itemize} 
\item \(\NN\) et les ensembles finis \(\NN_k=\sotq{n\in\NN}{n<k}\) (\(k\geq 0\))   sont des éléments de \(\fF\);  
\item 
toute partie finiment énumérée d'un \elt de $\fF$ est isomorphe\footnote{Dans la catégorie des ensembles.} à un \elt de $\fF$;
\item si \(J\in\fF\), l'ensemble des parties finiment énumérées de $J$ est isomorphe à un \elt de~$\fF$;  
\item $\fF$ est stable par réunions disjointes indexées dans $\fF$:
on notera $I+J$ une réunion disjointe de $I$ et $J$, et $\sum_{i\in I} J_i$ une réunion disjointe de la famille $(J_i)_{i\in I}$. 
\end{itemize}
\end{fpropriétésofindexors}

Nous considérons les réunions disjointes au sens des sommes directes dans la catégorie des ensembles.
Une réunion disjointe \(J=\sum_{i\in I} J_i\) est donnée avec une famille 
 d'applications injectives  \(\iota_\ell\colon J_\ell \to J\) qui réalisent $J$ comme la somme directe des $J_i$ dans la catégorie des ensembles.

Pour les ordinaux de la seconde classe (les ordinaux dénombrables), on peut prendre pour $\fF$ l'ensemble  
\[\fbox{\(\fF_2=\sotq{\NN_k}{k\in\NN, k\geq0} \cup \so\NN\)}\]
muni d'opérations convenables pour l'ensemble des sous-ensembles finis
d'un \(I\in \fF\) et pour les réunions disjointes d'éléments de~\(\fF\) indexés par un élément de~\(\fF\).
Tout autre ensemble d'indexeurs~$\fF$ contiendra au moins l'ensemble $\fF_2$ qui sert à définir les ordinaux dénombrables.

Si $E$ est un ensemble, une \emph{famille indexée dans $\fF$ d'\elts de $E$} est simplement une famille $(x_i)_{i\in I}$ pour un $I\in\fF$, avec les $x_i\in E$. L'ensemble des familles  indexées dans $\fF$ d'\elts de~$E$ sera noté $\Fam(\fF,E)$. 

Nous utilisons des indices \gui{en bas} uniquement comme ci-dessus pour les ordinaux. Nous utiliserons des indices \gui{en haut} pour tous les autres cas.

\subsection{Les axiomes} 
Une  structure de \emph{\Ford} sur un ensemble~\((E,=)\) est  \((E,<,\leq,0_E,\sup,\suc)\), où 
\begin{itemize}
\item $<$ et $\leq$ sont des relations binaires définies sur
l'ensemble $(E,=)$;
\item  $0_E$ est un \elt de $E$; on note $E\sta=\sotq{\alpha\in E}{0_E<\alpha}$;
\item  $\sup$ est une fonction de $\Fam(\fF,E\sta)$ dans $E\sta$: à partir d'un \elt $(\alpha_i)_{i\in I}$ de $\Fam(\fF,E\sta)$, elle construit un \elt de $E\sta$ noté $\alpha=\sup_{i\in I}\alpha_i$;
\item   \(\suc\) est une fonction de  \(E\) vers \(E\sta\):  à partir d'un  élément \(\beta\in E\), elle construit un élément de \(E\sta\) noté  \(\suc(\beta)\), appelé le successeur de $\beta$\footnote{Nous disons dans ce cas que \(\suc\) est la fonction unaire \gui{successeur}. Mais plus loin nous utilisons le même symbole \(\suc\) pour une fonction infinitaire (définition et proposition \ref{ffactsupsucinfinis}).}.
\end{itemize}

\begin{fdefinition} \label{fdefibinarysup}
Pour écrire les axiomes avec des  \(\sup\) finis,  nous définissons  \(\sup(\alpha,\beta)\) pour \(\alpha,\beta\in E\) comme suit (en utilisant implicitement l'\cref{fax15}):  tout d'abord  \(\sup(0_E,\alpha)=\alpha=\sup(\alpha,0_E)\); et si \(\alpha,\beta\in E\sta\), 
\(\sup(\alpha,\beta)\) est déjà défini.
\end{fdefinition}

Ces données doivent vérifier les \prts suivantes. 

%
\begin{faxiomsfororders}
\begin{enumerate}[label=\textit{\arabic*.},ref=\textit{\arabic*}]
\item[]
\item \label[faxiom]{frefantisym} \(\alpha=\beta\) si, et seulement si, \(\alpha\leq \beta\) et \(\beta\leq \alpha\)  (réflexivité et antisymétrie);
\item \label[faxiom]{fzero}\(0_E\leq \alpha\);
\item \label[faxiom]{fax3} si \(\alpha<\alpha\) alors \(0_E=\beta\)  (irréflexivité);  
\item \label[faxiom]{faxltle} si \(\alpha<\beta\) alors \(\alpha\leq \beta\);  
\item\label[faxiom]{ftransun} si \(\alpha\leq \beta\) et \(\beta\leq \gamma\), alors \(\alpha\leq \gamma\)  (transitivité 1); 
\item\label[faxiom]{ftransdeux} si \(\alpha<\beta\) et \(\beta\leq \gamma\), alors \(\alpha<\gamma\)  (transitivité 2);
\item\label[faxiom]{ftranstrois} si \(\alpha\leq \beta\) et \(\beta< \gamma\), alors \(\alpha< \gamma\)  (transitivité 3);
\item \label[faxiom]{faxsuccunaire} \(\alpha<\suc(\beta)\) si, et seulement si, 
\(\alpha\leq \beta\)  (en utilisant l'\cref{frefantisym} on obtient \(\alpha<\suc(\alpha)\));
\item \label[faxiom]{faxsuccunaire2} \(\suc(\beta)\leq \alpha\) si, et seulement si, \(\beta<\alpha\);
\item \label[faxiom]{fsupsucfini} si \(\alpha<\gamma\) et \(\beta<\gamma\), alors \(\sup(\alpha,\beta)<\gamma\);
%
\item \label[faxiom]{fax10} si \(\alpha<\sup(\alpha,\beta)\) alors \(\alpha<\beta\);
\item  \label[faxiom]{fax11}  si \(\gamma<\alpha\) et \(\alpha\leq\sup(\beta,\gamma)\), alors \(\alpha\leq \beta\);
\item \label[faxiom]{faxsup} pour  \((\alpha_i)_{i\in I}\in \Fam(\fF,E\sta)\) et   \(\beta\in E\),  on~a
\[
  \alpha_i\leq \beta\text{ pour tout }i\in I\ \text{ si, et seulement si, }\ \sup(\alpha_i)_{i\in I}\leq \beta
\]
(propriété caractéristique de \(\sup\));
\item \label[faxiom]{fax14} si \(\gamma<\beta\) pour tout \(\gamma<\alpha\), alors \(\alpha\leq \beta\);
\item  \label[faxiom]{fax15} \(\alpha\leq 0_E\) ou   \(0_E<\alpha\).
\end{enumerate}
\end{faxiomsfororders}

La \emph{catégorie des \Fords}  est définie par ses morphismes 
\[
  \ndsp
(E,<_E,\leq_E,0_E,\sup_E,\suc_E) \lora (F,<_F,\leq_F,0_F,\sup_F,\suc_F)\text,
\]  
qui sont les fonctions de $E$ dans $F$ qui préservent la structure (au sens usuel évident).

\begin{fcomments}
  \begin{enumerate}[label=\arabic*)]
  \item Soit $\gamma\in E$ et $(\alpha_n)_{n\in \NN}$ avec  $\alpha_n=\gamma$ ou $\alpha_n=\suc(\gamma)$ pour tout $n$. Alors l'\elt $\sup_{n\in \NN}\alpha_n$ \gui{hésite} à priori entre $\suc(\gamma)$ et $\suc(\suc(\gamma))$.
 Il n'y a donc aucun espoir que la disjonction \gui{$\alpha\leq \beta$ \hbox{ou $\beta< \alpha$}}
puisse être explicite dans le cas d'\elts  
$\alpha,\beta\ne0_E$. En conséquence, on a introduit la fonction $\sup$  avec les axiomes correspondants que l'on peut réaliser de manière \cov, de façon à mieux décrire en quoi l'ordre peut être considéré comme \gui{total}.
Mais ce n'est peut-être pas optimal (il peut manquer des axiomes raisonnables, qui sont satisfaits dans l'ensemble $\Ord_2$ des ordinaux dénombrables construit dans la \cref{fConstOrd} et qui ne résultent pas de ceux donnés ici).


\item L'irréflexivité est donnée sous une forme qui, au lieu d'affirmer une négation, permet~à~\(E\) de se réduire à un singleton. Cela se produit si, et seulement si, \(0_E=\suc(0_E)\), ce qui implique \(0_E<0_E\) en utilisant l'\cref{faxsuccunaire}. 

\item L'\cref{fax15} exprime que \(\so{0_E}\) est détachable. 
Cela contraste avec le fait qu'aucun élément autre que~\(0_E\) ne définit  un singleton détachable. On a défini \(\sup\) sur \(E\sta\) plutôt que sur  \(E\) en vue de satisfaire constructivement la disjonction de l'\cref{fax15}. 

\item La propriété caractéristique de \(\sup\) montre que cette loi  est idempotente et satisfait les \prts d'associativité et commutativité généralisées.
\eoe
\end{enumerate}
\end{fcomments}

\subsection{Quelques propriétés}

\begin{fpropdef}[généralisation de la \cref{fdefibinarysup}] \label{fpropdefsupfini}\leavevmode\\*  
Pour \(\alpha^1,\dots,\alpha^r\in E\) nous posons
\[
  \sup(\alpha^1,\dots,\alpha^r)\eqdefi
\formule{0_E \hbox{ si } \alpha^1=\dots=\alpha^r=0_E\\[.5em]
\hbox{le \(\sup\) des \(\alpha^k\ne0_E\) sinon.}}
\] 
La propriété caractéristique de \(\sup\) est satisfaite:
\[
  \alpha^1\leq \beta\hbox{ et }\dots\hbox{ et }\alpha^r\leq \beta\ \text{ si, et seulement si, }\ \sup(\alpha^1,\dots,\alpha^r)\leq \beta\text.
\] 
\end{fpropdef}

\begin{ffact} \label{ffactsuciso} Soient \(\alpha,\beta\) des éléments de \(E\).
\begin{itemize}
\item \(\suc(\alpha)<\suc(\beta)\) si, et seulement si, \(\alpha<\beta\).
\item \(\suc(\alpha)\leq \suc(\beta)\) si, et seulement si, \(\alpha\leq \beta\).
\end{itemize}
\end{ffact}
\begin{proof}
Utiliser les \cref{faxsuccunaire,faxsuccunaire2}.  
\end{proof}
%

\begin{ffact} \label{ffactreverse1013}
Les \cref{fsupsucfini,fax10,fax11,fax14} sont en fait des équivalences:
\begin{itemize}
\item [\ref{fsupsucfini}.] on a \(\alpha<\gamma\) et \(\beta<\gamma\)  si, et seulement si, \(\sup(\alpha,\beta)<\gamma\); 
\item [\ref{fax10}.] on a \(\alpha<\sup(\alpha,\beta)\) si, et seulement si, \(\alpha<\beta\);
\item [\ref{fax11}.]  l'implication  si \(\gamma<\alpha\), alors \(\alpha\leq\sup(\beta,\gamma)\) a lieu si, et seulement si, \(\alpha\leq \beta\);
\item  [\ref{fax14}.]  on a \(\alpha\leq \beta\) si, et seulement si, \(\gamma<\beta\) pour tout \(\gamma<\alpha\).
\end{itemize}
\end{ffact}
\begin{proof}
Utiliser les transitivités  et la propriété caractéristique de \(\sup\).
\end{proof}
%

\begin{ffact}[la fonction successeur commute avec les \(\sup\) finis, notation comme dans la  {\cref{fpropdefsupfini}}] \label{ffactsupsuc}\leavevmode
On~a 
\(\sup(\suc(\alpha),\suc(\beta))=\suc(\sup(\alpha,\beta))\) et
plus généralement  \(\sup(\suc(\alpha^1),\allowbreak\dots,\suc(\alpha^r))=\suc(\sup(\alpha^1,\dots,\alpha^r))\).
\\
En particulier, si \(\alpha^1<\gamma\), \(\dots\)\kern.16667em, \(\alpha^r<\gamma\), alors \(\sup(\alpha^1,\dots,\alpha^r)<\gamma\).
\end{ffact}
\begin{proof} Il suffit de démontrer \(\suc(\sup(\alpha,\beta))=\sup(\suc(\alpha),\suc(\beta))\). On~a la chaine d'équivalences suivantes: \(\suc(\sup(\alpha,\beta))\leq\gamma \iff\)
\(\sup(\alpha,\beta)<\gamma \iff\)
\((\alpha<\gamma\) et \({\beta<\gamma})\iff\)
 \((\suc(\alpha)\leq\gamma\)  et \(\suc(\beta)\leq\gamma)\iff\) \(\sup(\suc(\alpha),\suc(\beta))\leq \gamma\).
\end{proof}

\begin{fpropdef}[définition d'une fonction \(\suc\) infinitaire,  sa propriété caractéristique] \label{ffactsupsucinfinis} 
 Pour tout  \((\alpha_i)_{i\in J}\in\Fam(\fF,E)\), on définit 
\(\s{\alpha_i}{i\in J}=\sup(\suc(\alpha_i))_{i\in J}\).
On a alors l'équivalence suivante:
\[\alpha_i<\beta\text{ pour tout }i\in J\ \text{ si, et seulement si, }\ \s{\alpha_i}{i\in J}\leq \beta \text. 
\]
\end{fpropdef}
\begin{proof} Utiliser les  \cref{faxsup,faxsuccunaire2}.
\end{proof}

Nous écrivons \fbox{\(F\subseteq_f I\)} pour exprimer le fait que \(F\) est une partie finiment énumérée de~\(I\). 

\begin{ffact} \label{ffactltle1} Soit \(\alpha,\beta^1,\dots,\beta^m\in E\).%
\begin{enumerate}[label=\textit{\arabic*.},ref=\textit{\arabic*}]
\item\label{ffactlte1-1} Supposons que \(\alpha=\s{\alpha_i}{i\in J}\) avec \((\alpha_i)_{i\in J}\in\Fam(\fF,E)\) et que \(\alpha_i<\sup(\beta^1,\allowbreak\dots,\beta^m)\) pour tout \(i\in J\). Alors \(\alpha\leq \sup(\beta^1,\dots,\beta^m)\).
\item\label{ffactlte1-2} Supposons que \(\beta^k=\suc ((\beta^k)_i)_{i\in J_k}\) avec \(((\beta^k)_i)_{i\in J_k}\in\Fam(\fF,E)\) pour \(k\in\lrbm\).
\\
Soit \hbox{\(F_1\subseteq_f J_1, \dots, F_m\subseteq_f J_m\)} non tous vides. Si 
\[
  \alpha\leq \sup((\beta^k)_j)_{k\in\lrbm,\;j\in F_k},
\]  
alors \(\alpha< \sup(\beta^1,\dots,\beta^m)\).
\end{enumerate}
\end{ffact}
\begin{proof}
\ref{ffactlte1-1}. C'est la \cref{ffactsupsucinfinis}.
\\
\ref{ffactlte1-2}. Supposons par exemple  \(F_1\) non vide. Alors
\(\alpha\leq \sup((\beta^1)_j)_{j \in F_1}<\beta^1\leq \sup(\beta^1,\allowbreak\dots,\allowbreak\beta^m)\). L'inégalité stricte résulte du \cref{ffactsupsuc} parce que tous les \((\beta^1)_j\) sont \(<\beta^1\) d'après la \cref{ffactsupsucinfinis}.
\end{proof}
%

\section{Construction inductive d'ensembles d'ordinaux}\label{fConstOrd}

\CAdre{.66} {Dans les sections \ref{fConstOrd} et \ref{fPropfondOrd}, l'ensemble d'indexeurs $\fF$ est fixé mais est rarement mentionné explicitement.}
 
Nous allons définir un ensemble d'ordinaux \(\Ord\) 
(plus précisément \(\Ord_\fF\))
et nous prouverons que c'est un objet initial dans la catégorie des \epts{\fF}.

Nous définissons d’abord un ensemble \(\ord\) (plus précisément \(\ord_\fF\)) de noms pour les ordinaux \(\fF\)-indexés au moyen d'une définition inductive.
La définition inductive la plus simple d'un ensemble infini est celle de \(\NN\): l'ensemble possède un élément \(0\) et une application successeur \(x\mt s(x)\colon \NN\to \NN\).
La définition inductive de \(\ord\) est très semblable à celle de \(\NN\).
Dans \(\NN\), chaque élément est soit \(0\) soit un \(s(x)\) pour un~\(x\in\NN\).
De même, dans \(\ord\),
chaque élément est soit \(\und0\) soit le \(\suc\) d'une famille \(\fF\)-indexée d'\elts de \(\ord\); on note \(\ord\sta\) l'ensemble des éléments de ce deuxième type.

\begin{fdefinition} \label{fdeford}
L'ensemble \(\ord\)  (plus précisément \(\ord_\fF\)) est défini par induction: il admet un \elt distingué \(\und0\) et il a une application 
\[\suc\colon\Fam(\fF,\ord)\to\ord\text.\]
\end{fdefinition}
\noindent N. B.: La seule contrainte dans cette définition inductive est que \(\suc\) est bien une application de~\(\Fam(\fF,\ord)\) vers \(\ord\). 

Un élément de \(\ord\)
sera appelé un \emph{[nom d'un] ordinal} dans la suite.

Quand \(\fF=\fF_2\), nous obtenons l'ensemble des noms d'ordinaux dénombrables, noté \(\ord_2\).
%
\begin{fremark} \label{fremdeford}
Tout  élément \(\alpha\in\ord\sta\) 
est donné avec: 
\begin{itemize}
\item l'\inxr utilisé dans la définition de \(\alpha\): il sera noté~\(\In_\alpha\);
\item la famille \(\chi_\ord(\alpha,i)_{i\in\In_\alpha}\) de ses \emph{sous-ordinaux définitionnels}, \emph{i.~e.}\ l'élément de \(\Fam(\fF,\ord)\) tel que \(\alpha=\s{\chi_\ord(\alpha,i)}{i\in\In_\alpha}\).
\end{itemize}
Ainsi, la  définition inductive de \(\ord\) implique l'existence 
d'une fonction \(\alpha\mt\In_\alpha\colon\ord\sta\to\fF\) et 
l'existence d'une famille dépendante \((\alpha,i)\mt\chi_\ord(\alpha,i)\)  définie pour \(\alpha\in\ord\sta\) et \(i\in\In_\alpha\). 
Pour simplifier l'exposé on commettra dans la suite
 un léger abus de notation en sous-entendant la construction de la famille dépendante $\chi_\ord$ et en notant $\alpha_i$ pour 
 $\chi_\ord(\alpha,i)$. On écrira donc selon cette convention d'écriture \fbox{\,$\alpha=\sucr\alpha\phantom{^n}\!\!$}. \eoe
 
\end{fremark}

Pour  \(\alpha^1,\dots,\alpha^r\in\ord\) on définit  \fbox{\(\suc(\alpha^1,\dots,\alpha^r)=\s{\alpha^i}{i\in\lrbr}\)}.

En particulier, si  \(\alpha\in\ord\),  son \emph{successeur immédiat}
  \hbox{\(\suc(\alpha)\)} est l'élément 
\(\beta=\s{\beta_i}{i\in\In_{\beta}}\), où \(\In_\beta=\NN_1=\so 0\) et \(\beta_0=\alpha\).
La suite \((\und m)_{m\in\NN}\)  dans \(\ord\) est définie par \recu par
  \(\und{m+1}=\suc(\und m)\). 
Puis nous définissons \fbox{\(\omega=\s{\und{n}}{n\in\NN}\)}.

Pour démontrer une propriété de \(\alpha=\s{\alpha_i}{i\in\In_\alpha}\), il suffit de démontrer la propriété pour chaque~\(\alpha_i\). 
De la même manière, on peut construire par induction une fonction dont le domaine de \dfn est \(\ord\), ou définir un prédicat sur~\(\ord\) par induction. C'est le cas par exemple dans les \cref{fdefsousord,fdefsupord} et plus généralement dans toute la suite de l'article.

\subsection{Sous-ordinaux} 
Voici une définition inductive précise.
\begin{fdefinition} \label{fdefsousord} 
Étant donné $\alpha=\sucr\alpha\in\ord\sta$, un \elt $\beta$ de $\ord$ est appelé un \emph{sous-ordinal définitionnel} de $\alpha$ lorsque $\beta=\alpha_i$ pour un $i\in \In_\alpha$. On note alors $\beta\lessdot_1 \alpha$. L'\elt~$\gamma$ est un \emph{sous-ordinal de $\alpha$} s'il est un sous-ordinal définitionnel de $\alpha$ ou un sous-ordinal d'un sous-ordinal définitionnel de $\alpha$.
On note alors $\gamma\lessdot \alpha$.
\end{fdefinition}

Ainsi $\und0$ est le seul \elt de $\ord$ qui n'a pas de sous-ordinal.

Le fait suivant prend acte du fait que la \dfn des relations $\cdot\lessdot_1\cdot$ et $\cdot\lessdot\cdot$ est une \dfn inductive correcte sur $\ord$.

\begin{ffact} \label{ffactsousord}
Les relations $\cdot\lessdot_1\cdot$ et $\cdot\lessdot\cdot$ sont bien fondées sur $\ord$.
\end{ffact}

En conséquence \gui{il n'y a pas de branches infinies dans l'arbre des sous-ordinaux d'un \elt de $\ord$}, au sens suivant.

\begin{ffact} \label{ffactSousord}
Une suite $(\alpha^j)_{j=1,2,\dots}$ dans $\ord$ où chaque $\alpha^{j+1}$ est un sous-ordinal de $\alpha^{j}$ aboutit en un nombre fini d'étapes à $\alpha^r=\und0$.
\end{ffact}

Notons que pour faire une construction (ou une démonstration) par $\lessdot_1$-induction (ou \hbox{par $\lessdot$-induction}), le cas $\und0$ doit être traité à part car il n'a pas de sous-ordinal.
Cependant, jusqu'à l'\ari des ordinaux \cpageref{fsubsec-arit-ord}, nous allons pouvoir nous passer de cette distinction de cas.

\subsection{Définition de la loi \texorpdfstring{\(\sup\)}{sup}}

\begin{fdefinition} \label{fdefsupord}~
\begin{enumerate}
\item 
La loi   $\sup\colon\Fam(\fF,\ord\sta)\to\ord\sta$  est définie comme suit.

Soit \((\alpha^j)_{j\in J}\) une famille dans \(\ord\sta\) avec \(J\in\fF\).
Si \(\alpha^j=\s{(\alpha^j)_i}{i \in I_{j}}\), alors \fbox{\(\sup(\alpha^j)_{j\in J}\)} est l'\elt \(\varepsilon=\s{\varepsilon_k}{k \in K}\), où 
\begin{itemize}
\item \(K\) est la réunion disjointe des \(I_{j}
  \);
\item \((\varepsilon_{k})_{k \in K}\) est la famille définie par  
\(\varepsilon_k=(\alpha^j)_i\) si \(\iota_j(i)=k\) 

\end{itemize}
(ici \(\iota_j\colon I_{j}
\to K\) est l'injection de \(I_{j}
\) dans la réunion disjointe des \(I_{j}
\)). 
On notera \(\sup(\alpha^j)_{j\in\lrbr}=\sup(\alpha^1,\dots,\alpha^r)\). 

\item Le \(\sup\) d'une famille finie dans \(\ord\) est défini comme suit.
\[
  \sup(\alpha^1,\dots,\alpha^r)\eqdefi
\formule{\und0 \;\hbox{ si } \alpha^1=\dots=\alpha^r=\und0
\\[.5em]
\hbox{le \(\sup\) des \(\alpha^k\in\ord\sta\) sinon.}}
\] 
\end{enumerate}
\end{fdefinition}

Notons que le point  2 est formellement inclus dans le point 1 si nous adoptons la convention \(\In_{\und0}=\NN_0\). Par contre cela ne permettrait pas de définir
des sup arbitraires de familles $\fF$-indexées   d'\elts de $\ord$.

\subsection{\texorpdfstring{Définitions de \(\leq \) et \(<\)}{Définitions de <= et  <}} 
Le principal du travail reste à faire, à savoir définir deux prédicats binaires $\leq $ et $<$ sur $\ord$ qui satisfont les \prts attendues. Plus précisément:
\begin{itemize}
\item la relation \gui{$\alpha\leq \beta$ et $\beta\leq \alpha$} doit être une relation d'\eqvc  (on note $\Ord$ l'ensemble quotient),
\item  les prédicats $\leq $ et $<$ et les fonctions $\sup$ et $\suc$ doivent passer au quotient (on ne change pas leurs noms),
\item  et la structure obtenue doit être une structure de \Ford.
\end{itemize}

\smallskip Pour faire ce travail on définit par induction deux relations asymétriques entre d'une part un \elt de $\ord$ et d'autre part une \emph{liste finie non vide (à permutation près}\footnote{On dit aussi un \emph{multiensemble} non vide.}) d'\elts de~$\ord$: 
\CAdre{.6}{\centerline{
$\alpha\leq \beta^1,\dots,\beta^m$\quad \hbox{  et  }\quad $ \alpha<\beta^1,\dots,\beta^m$\quad ($m\geq 1$).}
\vspace{-.8em}
}

\begin{fconventions}
\begin{itemize}
\item Les lettres $\alpha$, $\beta$, $\gamma$, $\varepsilon$
éventuellement munies d'exposants, d'indices ou de primes,  désignent des \elts de $\ord$.
\item Si $\alpha$ est un \elt de $\ord$ et si $F$ est une liste finie, éventuellement vide, dans $\In_\alpha$,
on note $\alpha_F$ la liste des $\alpha_i$ pour les $i$ dans $F$.
\end{itemize}
\end{fconventions}

Les   définitions inductives simultanées des  deux relations  sont les suivantes.
\begin{framed}%
\noindent Les cas particuliers pour $\und0$ sont évités en posant par convention $\In_{\und0}=\NN_0$. Soit \(m\) un entier \(\geq 1\). \smallskip

\noindent\(\alpha\leq \beta^1,\dots,\beta^m\)\quad
est défini par\quad\(\alpha_i<\beta^1,\dots,\beta^m\) pour tout \(i\in \In_\alpha\).

\noindent\(\alpha< \beta^1,\dots,\beta^m\)\quad
est défini par\quad
\begin{tabular}[t]{@{}l@{}}
  il existe \(F_1\subseteq_f \In_{\beta^1},\dots,F_m\subseteq_f \In_{\beta^m}\)\\
  non toutes vides telles que \(\alpha\leq \beta^1_{F_1}, \dots,\beta^m_{F_m}\).
\end{tabular}
\end{framed}
Le fait que ces \dfns sont correctement posées tient à ce que les \elts de $\ord$ sont définis de manière inductive et à ce que le couple des deux \dfns est inductif. 

Sans la convention concernant $\In_{\und0}=\NN_0$ le fait \ref{ffactOr00}
ci-dessous devrait faire partie de la \dfn. Cette convention est \gui{un petit miracle} qui nous permet, dans les \dems qui suivront, de ne pas à avoir à raisonner au cas par cas selon que $\alpha= \und0$ ou $\alpha\in\ord\sta$.
 
La signification de ces deux relations est \(\alpha\leq \sup(\beta^1,\dots,\beta^m)\) et \(\alpha< \sup(\beta^1,\allowbreak\dots,\beta^m)\). 

\begin{flemma} \label{flemsupetlist}
On a \(\alpha<\beta^1,\dots,\beta^m\) si, et seulement si, \(\alpha< \sup(\beta^1,\dots,\beta^m)\). 
\\
De la même manière \(\alpha\leq \beta^1,\dots,\beta^m\) si, et seulement si, \(\alpha\leq \sup(\beta^1,\dots,\beta^m)\).
\end{flemma}
\begin{proof}
Écrivons 
\[
  \begin{aligned}
    \alpha\prec \beta^1,\dots,\beta^m&\text{ pour }\alpha< \sup(\beta^1,\dots,\beta^m)\text,\\
    \alpha\preceq \beta^1,\dots,\beta^m&\text{ pour }\alpha\leq \sup(\beta^1,\dots,\beta^m)\text.
\end{aligned}
\]
Soit \(\varepsilon=\sup(\beta^1,\dots,\beta^m)\). Alors \(\alpha\prec \beta^1,\dots,\beta^m\) si, et seulement si, \(\alpha\leq\varepsilon_F\) avec \(F\) un sous-ensemble finiment énuméré non vide de la réunion disjointe~\(K\) des \(\In_{\beta^j}\) et \(\varepsilon_k=(\beta^j)_i\) si \(k\) est l'image de~\(i\) dans \(K\); en posant \(F_j=F\cap\In_{\beta^j}\), les~\(F_j\) ne sont pas tous vides et cela peut être réécrit comme \(\alpha\leq \beta^1_{F_1}, \dots,\beta^m_{F_m}\). Cela a lieu si, et seulement si, \(\alpha< \beta^1,\dots,\beta^m\).

On a \(\alpha\preceq \beta^1,\dots,\beta^m\) \ssi pour tout \(i\in \In_\alpha\), \(\alpha_i<\varepsilon\), \emph{i.~e.}\ \(\alpha_i\prec\beta^1,\dots,\beta^m\), \emph{i.~e.}\ \(\alpha_i<\beta^1,\dots,\beta^m\); cela a lieu si, et seulement si, \(\alpha\leq \beta^1,\dots,\beta^m\).
\end{proof}

La relation \(\alpha=_\Ord\beta\) est définie comme signifiant \gui{\(\alpha\leq \beta\) et \(\beta\leq \alpha\)}. 

On montrera dans la \cref{fPropfondOrd} que la relation \(\cdot=_\Ord\cdot\) est une relation d'équivalence et on définira l'ensemble \(\Ord\) comme le  quotient de \(\ord\) par cette relation.

Notons que jusqu'au \cref{fthOrd1}, le symbole \(=\) entre deux éléments de \(\ord\) est l'égalité dans \(\ord\) et n'a pas la signification de  \(=_\Ord\). Néanmoins, une fois que l'on aura montré que les
relations et les lois de $\ord$ \gui{passent au quotient} dans $\Ord$, les énoncés contenant le symbole $=$ seront \egmt valables avec le symbole $=_\Ord$.

\subsection{Ordinaux finis, ordinaux bornés}

Nous commençons avec quelques propriétés de~\(\und0\).

\begin{ffact} \label{ffactOr00}  Soit \(m\) un entier \(\geq 1\), \(\alpha,\beta^1,\dots,\beta^m \in\ord\), et \(\gamma\in\ord\sta\). On a
\begin{enumerate}[label=\textit{\arabic*.},ref=\textit{\arabic*}]
\item\label{ffactOr001}  \(\und0\leq \beta^1,\dots,\beta^m\);
\item  \(\und0< \gamma,\beta^2,\dots,\beta^m\);
\item \(\alpha<\underbrace{\und0,\dots,\und0}_{m\text{ fois}}\) est impossible.
\end{enumerate}
\end{ffact}

\begin{proof}
Conséquence \imde des \dfns.
\end{proof}

\begin{fremark} \label{frem-ax15}
L'axiome \emph{\ref{fax15}} 
sera satisfait dans $\Ord$ car tout \elt de $\ord$ est donné ou bien sous la 
forme~$\und0$ ou bien sous la forme d'un \elt~$\gamma\in\ord\sta$, de sorte qu'on a toujours $\und0<\gamma$ d'après le point~\emph{2} du \cref{ffactOr00}.
\eoe
\end{fremark}

\begin{ffact} \label{ffactnatord}
Soit \(m,n\in\NN\). Alors
\begin{enumerate}[label=\textit{\arabic*.},ref=\textit{\arabic*}]
\item \(m\leq n\) si, et seulement si, \(\und m\leq \und n\);
\item \(m< n\) si, et seulement si, \(\und m< \und n\); 
\item \(\und m\leq \und n\) et \(\und n< \und m\) sont incompatibles.
\end{enumerate}
\end{ffact}
\begin{proof}
Pour les implications directes dans \emph{1} et \emph{2}, on écrit $n=m+r$ et l'on fait une \recu sur $r$. 
Pour les implications réciproques, on a déjà vu  les cas $m=0$ et $n=0$. On vérifie ensuite que $\und{m+1}\leq \und{n+1}$ implique 
$\und{m}\leq \und{n}$, et que $\und{m+1}< \und{n+1}$ implique 
$\und{m}< \und{n}$. Cela permet de conclure par \recu sur $m$.
\\
Le point \emph{3} découle des points \emph{1} et \emph{2}. 
\end{proof}

Un $\alpha\in\ord$ est dit \emph{fini} si $\alpha=_\Ord\und m$  pour un $m\in\NN$, il est dit  \emph{borné} s'il est majoré par un ordinal fini. Les ordinaux bornés sont \gui{beaucoup plus complexes} que les ordinaux finis (voir les   \cref{fexaLLPO,fexa123}).

Dans la \cref{fin-class-math}, nous discuterons ce que les relations \(\leq\)~et~\(<\) sur l'ensemble~\(\ord_\fF\) deviennent en \clama.

\subsection{Premières conséquences} 

Le fait suivant montre que lorsqu'on sera passé au quotient, sur l'ensemble $\Ord$, la loi $\suc$ vérifiera la \prt \cara donnée dans l'\cref{faxsup}.
\begin{ffact}[\sucdef] \label{ffactsucc}
 On a $\alpha\leq \beta$ \ssi pour tout~$i\in \In_\alpha$, $\alpha_i<\beta$.
\end{ffact}
\begin{proof}
Cette \prt est tautologique: il s'agit simplement de la \dfn de \hbox{$\alpha\leq \beta$}. 
\end{proof}

De la même manière, le fait suivant montre que la loi \(\sup\)  satisfera la  propriété caractéristique donnée dans l'\cref{faxsup} quand nous montrerons qu'elle passe au quotient dans  \(\Ord\). 

\begin{ffact}[\supdef] \label{ffactsup}
Soit \((\alpha^j)_{j\in J}\) une famille dans \(\ord\sta\) avec \(J\in\fF\), \(\gamma=\sup(\alpha^j)_{j\in J}\), et \hbox{\(\beta\in\ord\)}. 
On a \(\gamma\leq \beta\) si, et seulement si, \(\alpha^j\leq \beta\) pour tout \(j\in J\).
En particulier, \(\sup(\alpha,\beta)\leq \beta \) si, et seulement si, \(\alpha\leq \beta\).
\end{ffact}

\noindent N. B.: Le résultat est \egmt vrai pour le \(\sup\) d'une famille finie dans \(\ord\).

\begin{proof} Encore une tautologie linguistique.
On a $\alpha^j=\suc_{i \in I_{j}}(\alpha^j)_i$ pour un $I_j\in\fF$.
Par \dfn de $\gamma$ et de $\leq$, l'in\egt $\gamma\leq \beta$ signifie que pour tout $j\in J$ et tout $i\in I_j$,  on a
$(\alpha^j)_i<\beta$, \emph{i.~e.}\ que pour tout $j\in J$, on a $\alpha^j\leq \beta$. 
\end{proof}
Le fait suivant montre que lorsqu'on sera passé au quotient, sur l'ensemble $\Ord$, les \cref{faxsuccunaire,faxsuccunaire2}  seront satisfaits.
\begin{ffact} \label{ffactOr01} 
\begin{enumerate}[label=\textit{\arabic*.},ref=\textit{\arabic*}]
\item\label{ffactOr01-1} \axsuccunaire.
On a  \(\alpha<\suc(\beta)\) si, et seulement si, \(\alpha\leq \beta\).
\item\label{ffactOr01-2} \axsuccunair.
On a \(\beta<\alpha\) si, et seulement si, \(\suc(\beta)\leq \alpha\).
\end{enumerate}
\end{ffact}
\begin{proof}
Rappelons que l'élément \(\gamma=\suc(\beta)\) est défini par \(\In_\gamma=\so 0\) et \(\gamma_0=\beta\). 

\noindent\ref{ffactOr01-1}. Par définition, \(\alpha<\gamma\) signifie que \(\alpha\leq \gamma_F\) pour une liste non vide \(F\subseteq_f\so0\). Cela force \(F=[0]\) et \(\gamma_F=\beta\).

\noindent\ref{ffactOr01-2}. Par définition, \(\gamma\leq \alpha\) signifie que \(\gamma_0<\alpha\), \emph{i.~e.}\ \(\beta<\alpha\).

\noindent En bref, mieux que des \eqvcs, ce sont des tautologies.
\end{proof}

La fait suivant nous permettra de raccourcir certaines démonstrations par induction.

\begin{ffact} \label{ffactOr1}~
\begin{enumerate}
%
\item [a.] On a une inégalité \(\; \alpha\leq \beta\) si, et seulement si, pour tout \(i\in \In_\alpha\), 
il existe un  \(F_i\subseteq_f \In_\beta\) non vide tel que \(\;\alpha_i\leq \beta_{F_i}\).
\item [b.] On a une inégalité \(\; \alpha< \beta\) si, et seulement si, il existe 
un \(F\subseteq_f \In_\beta\) non vide tel que  pour tout \(i\in \In_\alpha\) on~a  \(\;\alpha_i< \beta_{F}\).
\end{enumerate}
\end{ffact}
\begin{proof}
Directe d'après les définitions.
\end{proof}

Nous quittons maintenant les \dems tautologiques pour aborder les premières \dems par induction.

\penalty-2500
\begin{ffact}[affaiblissement et contraction] \label{ffactOr0}~
\begin{itemize}
%
\item \wkn. Si \(\alpha\leq \beta^1,\dots,\beta^m\), alors pour tout \(\beta\)
on a  \(\alpha\leq \beta,\beta^1,\dots,\beta^m\).
\item \ctn. Si \(\alpha\leq \beta^1,\beta^1,\beta^2,\dots,\beta^m\) alors \(\alpha\leq \beta^1,\beta^2,\dots,\beta^m\).
\item Mêmes \prts avec $<$ à la place de $\leq $.
\end{itemize}
\end{ffact}
\begin{proof}
Démonstrations par induction, \imdes d'après les \dfns.
\end{proof}

Le lemme suivant est un corolaire du lemme \ref{ffactOr1}.  Le point \emph{1} (resp. \emph{2})  impliquera que la fonction $\suc$ (resp. $\sup$) passe au quotient dans $\Ord$ (resp. $\Ord\sta$).
Le point \emph{3} impliquera que les relations $\leq $ et $=$ sont réflexives dans $\Ord$, les points \emph{5} et \emph{7} impliqueront les axiomes \emph{\ref{fax3}} et \emph{\ref{fax14}} pour $\Ord$.

\begin{flemma} \label{fcorfactOr1}~
\begin{enumerate}[label=\textit{\arabic*.},ref=\textit{\arabic*}]
%
\item\label{fcorfactOr1-succz} \succz. Soit \(\alpha,\beta\in\ord\) avec \(\In_\alpha=\In_\beta\) et \(\alpha_i\leq \beta_i\) pour tout \(i\in \In_\alpha\). Alors \(\alpha\leq \beta\).

\item\label{fcorfactOr1-supz} \supz. Soit   \(\alpha,\beta\in\ord\sta\) avec \(\In_\alpha=\In_\beta\) et \(\alpha_i\leq \beta_i\) pour tout \(i\in \In_\alpha\). Alors
\[\sup(\alpha_i)_{i\in\In_\alpha}\leq \sup(\beta_i)_{i\in\In_\beta}\text.\] 
Le résultat vaut aussi pour les sup finis dans $\ord$.
\item\label{fcorfactOr1-rfl} \rfl. Pour tout  \(\alpha\in\ord\), on~a \(\alpha\leq \alpha \).
À fortiori, \(\alpha\leq \alpha,\beta^1,\dots,\beta^m\).
\item\label{fcorfactOr1-succu} \succu. Pour tout  \(\alpha\in\ord\sta\) et tout \(i\in \In_\alpha\), on~a \(\alpha_i<\alpha\). À fortiori, \(\alpha_i< \alpha,\beta^1,\dots,\beta^m\).
\item\label{fcorfactOr1-irfl} \irfl. Pour tout  \(\alpha\in\ord\), \(\alpha<\alpha\) est impossible.
\item \label{fcorfactOr1-37} \(\alpha < \suc(\alpha)\).
\item\label{fcorfactOr1-ax14} {\tt ax14}.  Si \(\gamma<\beta\) pour tout \(\gamma<\alpha\), alors \(\alpha\leq \beta\).
\end{enumerate}
\end{flemma}
\begin{proof}

\noindent \ref{fcorfactOr1-succz}. Direct d'après le \cref{ffactOr1}\emph{a}.
On prend \(F=\so{i}\).

\noindent
\ref{fcorfactOr1-supz}. Soit   \(\gamma=\sup(\alpha_i)_{i\in\In_\alpha}\)
et  \(\epsilon=\sup(\beta_i)_{i\in\In_\beta}\). D'après le \cref{ffactOr1}\emph{a}, pour tout~\(j\in\In_{\alpha_i}\) il existe un \(F_{i,j}\subseteq_f \In_{\beta_i}\) non vide tel que \((\alpha_i)_j\leq (\beta_i)_{F_{i,j}}\); \(F_{i,j}\) est à fortiori dans la réunion disjointe des~\(\In_{\beta_i}\), de sorte que \((\alpha_i)_j<\epsilon\) 
par définition de~\(\epsilon\). Par définition, \(\alpha_i\leq\epsilon\), de sorte que d'après le \cref{ffactsup} on a \(\gamma\leq\epsilon\).

\noindent
\ref{fcorfactOr1-rfl}. Par induction: nous utilisons le \cref{ffactOr1}\emph{a}, on prend \(F=\so{i}\) et \(\alpha\leq\alpha\) se réduit à \(\alpha_i\leq \alpha_i\).

\noindent
\ref{fcorfactOr1-succu}. Par induction: nous utilisons le \cref{ffactOr1}\emph{b}, on prend \(F=\so{i}\) et \(\alpha_i<\alpha\) se réduit à \((\alpha_i)_j< \alpha_i\).

\noindent
\ref{fcorfactOr1-irfl}. Par induction: nous utilisons \cref{ffactOr1}\emph{b}, on prend \(F=\so{i}\) et \gui{\(\alpha< \alpha\) est impossible} se réduit à: \gui{\(\alpha_i< \alpha_i\) est impossible}.

\noindent
\ref{fcorfactOr1-37}.
Appliquer  \succu\ à \(\beta=\suc(\alpha)\).

\noindent
\ref{fcorfactOr1-ax14}.
Si \(\alpha=\und0\), la conclusion est claire. Si \(\alpha=\s{\alpha_i}{i\in I}\), comme \(\alpha_i<\alpha\) pour chaque \(i\in\In_\alpha\) (point~\ref{fcorfactOr1-succu}), l'hypothèse que \(\gamma<\beta\) pour tout \(\gamma<\alpha\) montre que \(\alpha_i<\beta\) pour tout \(i\in \In_\alpha\).
Nous concluons par le~\cref{ffactsucc} que \(\alpha\leq \beta\).
\end{proof}

\subsection{Ordinaux et principes d'omniscience limités}
\label{fsec:princ-omnisc}

\begin{fexample} \label{fexaLLPO}
Soit   \((v_n)_{n\in \NN}\)  une suite dans \(\so{0,1}\) qui prend au moins une fois la valeur \(1\). Le principe \LLPO\ dit que nous avons 
\[
  \exists k\in \so{0,1}\ \forall n\ \ (v_n=1 \Rightarrow \;n\equiv k\mod 2).\eqno(*)
\] 
Pour une telle suite \((v_n)_{n\in \NN}\) définissons \(\varepsilon\), \(\varepsilon^1\) et \(\varepsilon^2\in\ord\) comme suit:
\[
  \varepsilon=\s{\und{v_n}}{n\in\NN},\quad\varepsilon^1=\s{\und{v_{2m}}}{m\in\NN},\quad\varepsilon^2=\s{\und{v_{2m+1}}}{m\in\NN}.
\] 
Alors on a \(\varepsilon\leq \sup(\varepsilon^1,\varepsilon^2)\). Mais 
\(\varepsilon\leq\varepsilon^1\) donne \(k=0\) dans \((*)\) et \(\varepsilon\leq\varepsilon^2\) donne \(k=1\) dans \((*)\). Donc la disjonction \(\varepsilon\leq\varepsilon^1\) ou \(\varepsilon\leq\varepsilon^2\) n'a pas de démonstration \cov: supposer la disjonction pour une suite \((v_n)\) arbitraire implique \LLPO.
\end{fexample}

\begin{fexample} \label{fexa123}
Soit   \((u_n)_{n\in \NN}\)  une suite croissante (au sens large) dans \(\so{0,1}\). Le principe \LPO\ dit qu'une telle suite est stationnaire: 
\[
  \exists n\in\NN\ \forall m\in\NN\ u_m\leq u_n. \eqno(*)
\]   
Pour une telle suite \((u_n)_{n\in \NN}\) définissons \(\alpha\) et \(\beta\in\ord\) comme suit: 
\[
  \alpha=\s{\und{u_n}}{n\in\NN},\qquad    \beta=\s{\und{u_n+1}}{n\in\NN}.
\]  
Nous notons que l'inégalité stricte \(\alpha<\beta\) est équivalente (d'après les \cref{ffactnatord,ffactOr1} et le \cref{flemsupetlist}) à
\[
  \exists n\in\NN\ \forall m\in\NN\ u_m< u_n+1,
\] 
ce qui revient à \((*)\).
En fait, \(\alpha\) hésite entre \(1\) et \(2\), \(\beta\) hésite entre \(2\) et~\(3\), et l'inégalité \(\alpha<\beta\)
est valide si nous supposons \LPO\@. Mais affirmer \(\alpha<\beta\) pour toutes les suites \((u_n)\) implique  \LPO\ en \coma.
Ici nous voyons que l'hésitation entre  \(1\) et \(2\) pour une suite infinie donne le même sentiment qu'hésiter (dans un contexte classique) entre une suite bornée ou non bornée d'entiers naturels: ajouter $1$ aux termes d'une telle suite n'augmente sa borne supérieure que si la suite est bornée.  
\end{fexample}

\subsection{En mathématiques classiques}\label{fin-class-math}

La \cref{fpropOrdclass} montre que le principe du tiers exclu (\LEM) simplifie et/ou obscurcit dramatiquement l'étude de la structure de $\ord_\fF$ par rapport aux relations~\(<\) et~\(\leq\). 
\begin{fproposition} \label{fpropOrdclass}
Supposons \LEM\@. Alors pour $\alpha,\beta\in\ord$, on a $\alpha\leq \beta$ \hbox{ou $\beta<\alpha$}. En outre, si $\beta<\alpha$, il existe un $i\in \In_\alpha$ tel que $\beta\leq \alpha_i$.
\end{fproposition}
\begin{proof}
On prouve par induction simultanée les deux \prts suivantes. 

\smallskip \centerline{\gui{\(\alpha\leq \beta\) ou \(\beta<\alpha\)}\quad  et \quad \gui{\(\beta\leq \alpha\) ou \(\alpha<\beta\)}.}

\smallskip \noindent Par hypothèse d'induction, on a pour tout \(i\in \In_\alpha\) et tout \(j\in \In_\beta\),
\gui{\(\alpha\leq \beta_j\) ou \(\beta_j<\alpha\)}, et aussi  \gui{\(\beta\leq \alpha_i\) ou \(\alpha_i<\beta\)}. 
\\
La première disjonction implique par \LEM\ que ou bien \(\beta_j<\alpha\) pour tout \(j\in \In_\beta\) ou bien on~a un \(j\in \In_\beta\) tel que \(\alpha\leq\beta_j \). Dans le premier cas on a  \(\beta\leq \alpha\) par définition de \(\cdot\leq \cdots\). Dans le second cas on a \(\alpha<\beta\) par définition de \(\cdot< \cdots\), avec pour \(F\subseteq_f \In_\beta\) la liste \([j]\).
\\
Raisonnement symétrique pour la deuxième disjonction.
\end{proof}
\noindent N. B.: Pour les ordinaux dénombrables le principe limité d'omniscience (\LPO) suffit pour démontrer la proposition.

\begin{fcorollary} \label{fcor1propOrdclass}
Supposons \LEM\@. Tout ordinal \(\alpha\ne\und0\) est ou bien un successeur immédiat, ou bien le \(\sup\) des ordinaux \(\gamma<\alpha\).
\end{fcorollary}
\begin{proof}
Considérons \(\alpha=\s{\alpha_i}{i\in\In_\alpha}\) et comparons \(\alpha\) avec \(\sup(\alpha_i)_{i\in\In_\alpha}\). Les détails sont laissés \alec.
\end{proof}
%

\begin{fcorollary} \label{fcorpropOrdclass}
Supposons \LEM\@. Tout ordinal borné est fini.
\end{fcorollary}
\begin{proof}
Laissé \alec: utiliser le \cref{ffactOr01}.
\end{proof}
%

\section{Résultats fondamentaux}\label{fPropfondOrd}

\subsection{\texorpdfstring{\(\Ord_\fF\) est un objet initial dans la catégorie des \Fords}{Ord\_F est un objet initial dans la catégorie des  F-ordres}}
%
\begin{flemma} \label{flemsupsucc} Soient \(\alpha^1,\dots,\alpha^r\) dans \(\ord\) (\(r\geq 1\)). On~a \[\fbox{\(\sup(\alpha^j)_{j\in\lrbr}<\allowbreak\s{\alpha^j}{j\in\lrbr}\).}\] 
\end{flemma}~
\begin{proof} Montrons \emph{e.~g.}\ que \(\epsilon=\sup(\alpha,\beta)<\gamma=\suc(\alpha,\beta)\). On a \(\In_{\epsilon}=\In_\alpha+\In_\beta\), avec \(\epsilon_k=\alpha_i\) si \(\iota_1(i)=k\), et \(\epsilon_k=\beta_j\) si \(\iota_2(j)=k\). On a \(\In_\gamma=\so{1,2}\)
avec \(\gamma_1=\alpha\) et \(\gamma_2=\beta\). Nous appliquons le \cref{ffactOr1}\emph{b} avec \(F=\so{1,2}\). Pour un \(k\) arbitraire dans  \(\In_\epsilon\), on~a \(\epsilon_k<\alpha,\beta\) car \(\epsilon_k\) est \(\alpha_i\) ou \(\beta_j\) et, par \succu, on~a \(\alpha_i<\alpha\) (à fortiori \(\alpha_i<\alpha,\beta\)) et \(\beta_j<\beta\) (à fortiori \(\beta_j<\alpha,\beta\)). 
\end{proof}
Notons que la démonstration précédente repose sur le fait que les définitions de \(\leq \) et \(<\) ont été données avec des listes sur le côté droit.

\begin{flemma}[transitivités] \label{flemtrans}~
\begin{enumerate}
\item \transu. Si \(\alpha\leq \beta^1,\dots,\beta^m\) et, pour tout
\(j\in\lrbm\), \(\beta^j\leq \gamma^1,\dots,\gamma^r\), alors  
\(\alpha\leq \gamma^1,\dots,\gamma^r\).
\item \transd. Si \(\alpha< \beta^1,\dots,\beta^m\) et, pour tout
\(j\in\lrbm\), \(\beta^j\leq \gamma^1,\dots,\gamma^r\), alors  
\(\alpha< \gamma^1,\dots,\gamma^r\).
\item \transt. Si \(\alpha\leq \beta^1,\dots,\beta^m\) et, pour tout
\(j\in\lrbm\), \(\beta^j< \gamma^1,\dots,\gamma^r\), alors  
\(\alpha< \gamma^1,\dots,\gamma^r\).
\end{enumerate}
\end{flemma}
Comme cas particuliers,  les \cref{ftransun,ftransdeux,ftranstrois}  seront valides quand nous passerons au quotient dans \(\Ord\):
\begin{itemize}
\item si \(\alpha\leq \beta\) et \(\beta\leq \gamma\)
alors \(\alpha\leq \gamma\);
\item   si \(\alpha< \beta\) et \(\beta\leq \gamma\)
alors \(\alpha< \gamma\);
\item   si \(\alpha\leq \beta\) et \(\beta< \gamma\)
alors \(\alpha< \gamma\).
\end{itemize}
\begin{proof} Les trois transitivités vont être démontrées par induction simultanée.

Pour démontrer \transu, on note que l'hypothèse signifie que l'on a $\alpha_i<\beta^1,\dots,\beta^m$ pour tout $i\in\In_\alpha$. Fixons un tel $i$. On utilise  \transd\ avec cet $\alpha_i$ à la place de $\alpha$, on obtient $\alpha_i<\gamma^1,\dots,\gamma^r$. Comme c'est vrai pour tout $i\in\In_\alpha$, cela donne la conclusion souhaitée $\alpha\leq \gamma^1,\dots,\gamma^r$.

Pour démontrer \transd, on note que l'hypothèse implique que l'on a des $G_j\subseteq_f \In_{\beta^j}$ non tous vides tels que 
$\alpha\leq \beta^1_{G_1},\dots,\beta^m_{G_m}$. On a aussi, pour $j\in\lrbm$ et pour tout
$h\in\In_{\beta^j}$, $\beta^j_h<\gamma^1,\dots,\gamma^r$.
À fortiori, cela est vrai pour les $h\in G_j$. On utilise alors \transt\ avec ces $\beta^j_h$ à la place des $\beta^j$.
On obtient la conclusion souhaitée $\alpha< \gamma^1,\dots,\gamma^r$.

Pour démontrer \transt, on note que l'hypothèse implique
(utiliser l'affaiblissement) que l'on a des $F_k\subseteq_f \In_{\gamma^k}$ non tous vides tels que 
$\beta^j\leq \gamma^1_{F_1},\dots,\gamma^r_{F_r}$ pour $j\in\lrbm$.
Cette fois-ci on utilise \transu\ avec des $\gamma^k_\ell$ à la place des $\gamma^k$, on en déduit que $\alpha\leq \gamma^1_{F_1},\dots,\gamma^r_{F_r}$, ce qui implique $\alpha< \gamma^1,\dots,\gamma^r$.
\end{proof}

Le lemme suivant montre que lorsqu'on sera passé au quotient sur  $\Ord$,  l'axiome~\emph{\ref{faxltle}} sera satisfait.  

\hum{Étonnant que ce ne soit pas arrivé bien plus vite !}
\begin{flemma}[\axquattre] \label{flemltle}
  Soit   \(\alpha,\beta^1,\dots,\beta^m\in\ord\). Si \(\alpha<\beta^1,\dots,\beta^m\), alors \(\alpha\leq \beta^1,\dots,\beta^m\).
\end{flemma}
\begin{proof}  Démonstration par induction sur $\alpha$.
On a $ \alpha < \beta^1,\dots,\beta^m$ \ssi il existe  des $F_k\subseteq_f \In_{\beta^k}$ non tous vides tels que, pour chaque $i\in \In_\alpha$, on a $\alpha_i\leq  \beta^1_{F_1},\dots, \beta^m_{F_m}$. 
 Fixons un $i\in\In_\alpha$. Pour $j\in F_k$, on a $\beta^k_j<\beta^k$, et en affaiblissant \hbox{$\beta^k_j<\beta^1,\dots,\beta^m$}. Par \transt,  on obtient $\alpha_i<\beta^1,\dots,\beta^m$. Enfin, comme c'est vrai pour \hbox{tout $i\in\In_\alpha$}, on a $\alpha\leq \beta^1,\dots,\beta^m$.
\end{proof}

Le lemme suivant montre que lorsqu'on sera passé au quotient sur $\Ord$,  l'\cref{fsupsucfini} sera satisfait.
\begin{flemma}[\axsupsucfini] \label{flemaxsupsucfini}
  Si \(\alpha<\gamma\) et \(\beta<\gamma\), alors \(\sup(\alpha,\beta)<\gamma\).
\end{flemma}
\begin{proof}
Par définition, on~a  \(\suc(\alpha,\beta)\leq \gamma\). Le \cref{flemsupsucc}
donne \(\sup(\alpha,\beta)<\suc(\alpha,\beta)\). Par transitivité, on obtient \(\sup(\alpha,\beta)<\gamma\).
\end{proof}
%

\begin{flemma} \label{flemContradic}
 Soient $n>0$ et $\alpha^1,\dots,\alpha^n\in\ord$.
Il  est impossible que, pour chaque $i\in\lrbn$, on \hbox{ait  $\alpha^i < \alpha^1,\dots,\alpha^n$}.
\end{flemma}
\begin{proof} On raisonne par induction. En utilisant l'affaiblissement, l'hypothèse donne des listes finies
non toutes vides 
\[
  \text{\(F_1\subseteq_f\In_{\alpha_1}\), \(\dots\)\kern.16667em, \(F_n\subseteq_f\In_{\alpha_n}\),}
\]  
telles que
\[
  \alpha^i \leq    \alpha^1_{F_1},\dots,\alpha^n_{F_n} \hbox{  pour } i \in\lrbm.
\] 
En particulier, pour $j\in F_i$ (si $F_i$ est non vide) on a
\[
  \alpha^i_j <    \alpha^1_{F_1},\dots,\alpha^n_{F_n}.
\] 
On est ramené à l'hypothèse avec la liste non vide \(\alpha^1_{F_1},\dots,\alpha^n_{F_n}\) qui remplace la \hbox{liste $\alpha^1,\dots,\alpha^n$}.
\end{proof}
%

\begin{flemma} \label{flema<avirguleb} Soient $\alpha^1,\dots,\alpha^n,\beta^1,\dots,\beta^m\in\ord$  $(n,m\geq 1)$.
\begin{enumerate}
\item  Si   $\alpha^i < \alpha^1,\dots,\alpha^n,\beta^1,\dots,\beta^m$ pour $i\in\lrbn$,
alors  $\alpha^i < \beta^1,\dots,\beta^m$ pour chaque $i$.

\item Soient $F_1\subseteq_f\In_{\alpha_1}$, \(\dots\)\kern.16667em, $F_n\subseteq_f\In_{\alpha_n}$. Si $\alpha^i \leq  \alpha^1_{F_1},\dots,\alpha^n_{F_n},\beta^1,\dots,\beta^m$ pour $i\in\lrbn$,
alors  $\alpha^i \leq  \beta^1,\dots,\beta^m$ pour chaque $i$.
\end{enumerate}
\end{flemma}
\begin{proof} 
\emph{1}. En utilisant l'affaiblissement, l'hypothèse donne des listes finies
non toutes vides
\[
  \text{\(F_1\subseteq_f\In_{\alpha^1}\), \(\dots\)\kern.16667em, \(F_n\subseteq_f\In_{\alpha^n}\), \(G_1\subseteq_f\In_{\beta^1}\), \(\dots\)\kern.16667em, \(G_m\subseteq_f\In_{\beta^m}\),}
\]
telles que
\[
  \alpha^i \leq    \alpha^1_{F_1},\dots,\alpha^n_{F_n}, \beta^1_{G_1},\dots,\beta^m_{G_m} \hbox{ pour } i \in\lrbn.
\eqno(*)
\] 
On a alors pour  $i\in\lrbn$ et $j\in\In_{\alpha_i}$ 
\[ 
\begin{array}{ccc} 
\alpha^i_j  <  \alpha^1_{F_1},\dots,\alpha^i_{F_i},\dots,\alpha^n_{F_n}, \beta^1_{G_1},\dots,\beta^m_{G_m}\text.
\end{array}
\]
Fixons \(i\)~et~\(j\): à fortiori, avec \(F'_i=F_i\cup\so j\)
\[ 
\begin{array}{ccc} 
\alpha^i_j  <  \alpha^1_{F_1},\dots,\alpha^i_{F'_i},\dots,\alpha^n_{F_n}, \beta^1_{G_1},\dots,\beta^m_{G_m}.   
\end{array}
\]
On a aussi par affaiblissement, pour \(k\in\lrbn\) et \(\ell\in F_k\),
\[ 
\begin{array}{ccc} 
\alpha^k_\ell  <  \alpha^1_{F_1},\dots,\alpha^i_{F'_i},\dots,\alpha^n_{F_n}, \beta^1_{G_1},\dots,\beta^m_{G_m}.   
\end{array}
\]
Donc par induction
\(\alpha^i_j < \beta^1_{G_1},\dots,\beta^m_{G_m}\).
Puisque \(j\) est arbitraire, nous obtenons \(\alpha^i \leq  \beta^1_{G_1},\allowbreak\dots,\beta^m_{G_m}\).
Ceci donne la conclusion cherchée, \(\alpha^i<\beta^1,\dots,\beta^m\), si au moins une liste~\(G_k\) est non vide, Pour un~\(i\in\lrbn\) arbitraire.
Si ce n'est pas le cas, \((*)\)  donne  \(\alpha^i \leq    \alpha^1_{F_1},\dots,\alpha^n_{F_n}\) pour \(i \in\lrbn\), avec  des listes \(F_i\) non toutes vides. Par définition, cela implique \(\alpha^i <\alpha^1,\dots,\alpha^n\)  pour \(i \in\lrbn\), ce qui est impossible d'après le \cref{flemContradic}.

\noindent \emph{2}.
On a  pour  \(i\in\lrbn\) et \(j\in\In_{\alpha^i}\) 
\[ 
\begin{array}{ccc} 
\alpha^i_j  <  \alpha^1_{F_1},\dots,\alpha^i_{F_i},\dots,\alpha^n_{F_n}, \beta^1,\dots,\beta^m\text.
\end{array}
\]
Fixons \(i\)~et~\(j\): à fortiori, avec \(F'_i=F_i\cup\so j\),
\[ 
\begin{array}{ccc} 
\alpha^i_j  <  \alpha^1_{F_1},\dots,\alpha^i_{F'_i},\dots,\alpha^n_{F_n}, \beta^1,\dots,\beta^m.   
\end{array}
\]
On a aussi par affaiblissement, pour \(k\in\lrbn\) et \(\ell\in F_k\),
\[ 
\begin{array}{ccc} 
\alpha^k_\ell  <  \alpha^1_{F_1},\dots,\alpha^i_{F'_i},\dots,\alpha^n_{F_n}, \beta^1,\dots,\beta^m.   
\end{array}
\]
Le point \emph{1} donne alors
 \(\alpha^i_j  <   \beta^1,\dots,\beta^m\).   
Comme \(j\) est arbitraire, nous obtenons ce que nous voulons: \hbox{\(\alpha^i \leq  \beta^1,\dots,\beta^m\)} pour un~\(i\in\lrbn\) arbitraire. 
\end{proof}

Le résultat suivant montre que les \cref{fax10,fax11} seront valides quand on passera au quotient~\(\Ord\).

\begin{flemma} \label{flem-supleftright}~
\begin{enumerate}[label=\textit{\arabic*.},ref=\textit{\arabic*}]
%
\item \axdouze.  Si \(\alpha<\sup(\alpha,\beta)\), alors \(\alpha<\beta\);
\item \axtreize. Si \(\gamma<\alpha\) et \(\alpha\leq\sup(\beta,\gamma)\), alors \(\alpha\leq \beta\).
\end{enumerate} 
\end{flemma}
\begin{proof} \emph{1}. Supposons \(\alpha<\sup(\alpha,\beta)\).
Le \cref{flemsupetlist} donne \(\alpha<\alpha,\beta\). Le point \emph{1} du \cref{flema<avirguleb} donne \(\alpha<\beta\).
 
\noindent \emph{2}. Supposons $\gamma<\alpha$ et $\alpha\leq\sup(\beta,\gamma)$. La première hypothèse donne $\gamma\leq \alpha_F$ pour $F\subseteq_f\In_\alpha$ non vide. 
La seconde hypothèse donne \(\alpha\leq \gamma,\beta\) (d'après le \cref{flemsupetlist}). 
Par transitivité on~a   \(\alpha\leq  \alpha_F,\beta\).
 Le point~\emph{2} du \cref{flema<avirguleb} donne \(\alpha\leq \beta\).
\end{proof}
%

\begin{ftheorem} \label{fthOrd1}
Nous avons construit \(\Ord\) en tant qu'un \Ford.
\end{ftheorem}
\begin{proof} En utilisant \rfl\ et \transu, on montre d'une part que l'\egt est bien une relation d'\eqvc, et d'autre part que la relation $\leq$ passe au quotient dans $\Ord$.

\noindent 
De la même manière, \transd\ et \transt\ impliquent  que la relation $<$ passe au quotient dans~$\Ord$.

\noindent 
La loi $\sup$ passe au quotient d'après le \cref{fcorfactOr1}.

\noindent  
La loi $\suc$ passe au quotient d'après le  \cref{fcorfactOr1}, point \emph{2}.

\noindent 
Il reste à noter que les  \cref{frefantisym,fzero,fax3,faxltle,ftransun,ftransdeux,ftranstrois,faxsuccunaire,faxsuccunaire2,fsupsucfini,fax10,fax11,faxsup,fax14,fax15} des \Fords ont été démontrés précédemment. Voir, respectivement: 
le \cref{fcorfactOr1} (point~\ref{fcorfactOr1-rfl}); 
le \cref{ffactOr00} (point~\ref{ffactOr001}); 
le \cref{fcorfactOr1} (point~\ref{fcorfactOr1-irfl}); 
le \cref{flemltle}; 
le \cref{flemtrans}; 
le \cref{ffactOr01}; 
le \cref{flemaxsupsucfini}; 
le \cref{flem-supleftright}; 
le \cref{ffactsup}; 
le \cref{fcorfactOr1} (point~\ref{fcorfactOr1-ax14}); 
la \cref{frem-ax15}.
\end{proof}

Le théorème suivant généralise le \cref{ffactnatord}.
\begin{ftheorem} \label{fthOrd2}
L'ensemble \(\Ord\) n'est pas réduit à un point. Plus précisément:
\begin{itemize}
\item pour tous \(\alpha,\beta\in\Ord\),  \(\beta\leq \alpha\) et \(\alpha<\beta\) sont incompatibles;
\item  l'application \(n\mapsto \und n\colon\NN\to\Ord\) 
est injective (\(m<n\) si, et seulement si, \(\und m <\und n\));
\item pour tout \(\alpha\in\Ord\) et  \(n>m\) dans \(\NN\),  il est impossible que \(\suc^{(n)}(\alpha)=_\Ord\suc^{(m)}(\alpha)\).
\end{itemize}
\end{ftheorem}
\begin{proof} 
Le premier point résulte de \irfl\ et de \transd. Le reste suit.
\end{proof}
%

\begin{ftheorem} \label{fthOrd3}
$\Ord$ est objet initial dans la catégorie des \Fords.
\end{ftheorem}
\begin{proof}[Esquisse de démonstration]
L'idée est la suivante: la structure est \gui{purement algébrique} et pour cons\-truire~$\Ord$, on n'a rien fait d'autre qu'utiliser les axiomes.

\noindent  En effet, considérons un objet \((E,<_E,\leq_E,0_E,\sup_E,\suc_E)\) dans la catégorie. Les \elts de \(\ord\) ont leurs copies dans \(E\). Et les relations \(\cdot<\cdots\) et \(\cdot\leq \cdots\) que l'on a définies sur \(\ord\) sont satisfaites dans \(E\) d'après le \cref{ffactltle1} lorsqu'on les interprète dans $E$ via des $\sup$ finis à droite (comme cela est \ncr d'après le \cref{flemsupetlist}).
Cela implique qu'il y a un unique morphisme de $\Ord$ vers~$E$
dans la catégorie considérée.
\end{proof}
%

\subsection{Quelques propriétés supplémentaires}
%

\begin{fproposition} \label{fpropordbienfonde}
La relation binaire $<$ sur $\Ord$ est bien fondée.
\end{fproposition}
\begin{proof}
Conséquence directe du \cref{ffactsousord}.
\end{proof}
%

\begin{flemma}[formes faibles de la disjonction ``\(\alpha\leq\beta\) ou \(\beta<\alpha\)'']
  \label{flemcut} ~
  
\noindent 
Soient \(r\geq 1\) et \(\alpha,\beta^1,\dots,\beta^r,\gamma\in\ord\). 
\begin{enumerate}
\item Si \(\alpha\leq \beta\) et \(\beta<\alpha,\gamma^1,\dots,\gamma^r\), alors  \(\beta<\gamma^1,\dots,\gamma^r\).
\item Si \(\beta< \alpha\) et \(\alpha\leq \beta,\gamma^1,\dots,\gamma^r\), alors  \(\alpha\leq \gamma^1,\dots,\gamma^r\). 
\end{enumerate}
\end{flemma}
\begin{proof}
On pose $\beta=\sup(\beta^1,\dots,\beta^r)$ et l'on est ramené à des \prts déjà démontrées (en utilisant le \cref{flemsupetlist}).
\end{proof}
%

\begin{fdefinition} \label{fdefiordfiltrant}
Un \elt $\beta=\sucr\beta\in\ord$ est dit \emph{filtrant} si pour tout  $F\subseteq_f\In_\beta$, il existe $j\in \In_\beta$ tel que $\sup(\beta_i)_{i\in F}\leq\beta_j$.
\end{fdefinition}

\begin{flemma} \label{ffiltrant}
Pour tout $\alpha\in\ord$, il existe un $\beta\in\ord$ tel que
$\alpha=_\Ord  \beta$ et $\beta$ est filtrant.
\end{flemma}
\begin{proof}
Si $\alpha=\suc(\alpha_i)_{i\in J}$, on note $K$ l'ensemble des parties finiment énumérées de $J$, et pour $F\subseteq_f J$ on note $\beta_F=\sup(\alpha_j)_{j\in F}$. Enfin $\beta=\suc(\beta_F)_{F\in K}$.
\end{proof}

\hum{ 
\begin{fremark} \label{fthOrdComplet} 
(Complétude) ??? 
 On aimerait bien un résultat du genre suivant.
\emph{Pour \(\beta\in\Ord\) on note \(\dar \beta=\sotq{\alpha\in\Ord}{\alpha\leq \beta}\). 
Soit \(\beta\in\Ord\) et \((\gamma^{(\alpha)})_{\alpha\in\dar\beta}\) 
une famille dans \(\Ord\) indexée par~\(\dar\beta\): alors, cette famille admet une borne supérieure  dans \(\Ord\).}
Cela serait  peut-être pratique pour la suite, mais semble hors d'atteinte: il faudrait parcourir tous les noms d'ordinaux \(\leq \beta\), mais ceci ne semble pas former un ensemble. En tout cas, même si c'est un ensemble, il semble que l'on n'a aucun accès raisonnable à la relation d'ordre.
\end{fremark}
}

\subsection{Arithmétique \elr des ordinaux} \label{fsubsec-arit-ord}

\subsubsection*{Addition (séquentielle)}
L'addition  \(\alpha+\beta\) (\(\alpha\) suivi de \(\beta\): l'addition n'est pas  commutative) est définie par induction sur~\(\beta\): 
\[
  \alpha+\und0=\alpha\quad \hbox{et}\quad 
\alpha+\beta=\s{\alpha+\beta_j}{j\in\In_\beta} \;\hbox{ si }\;\beta=\s{\beta_j}{j\in\In_\beta} \in\ord\sta.
\]  

Cette formule pour \(\alpha+\beta\) ne fonctionne que pour le cas \(\In_\beta\neq \NN_0\) (elle donnerait \(\alpha+\und0=\und0\)).
%
Nous avons aussi \(\alpha+\beta=\sup((\alpha+\beta_j)+\und1)_{j\in\In_\beta}\) si \(\In_\beta\neq \NN_0\).

Les propriétés suivantes se démontrent par induction:
\begin{itemize}
\item si \(\alpha\leq \alpha'\) et \(\beta\leq \beta'\), alors 
\(\alpha+\beta\leq \alpha'+\beta'\);
\item \((\alpha+\beta)+\gamma=\alpha+(\beta+\gamma)\);
\item \(\alpha+\und0=\und0+\alpha=\alpha\); 
\item  \(\alpha+\beta\leq \alpha+\gamma\) si, et seulement si, \(\beta\leq\gamma\); 
\item  \(\alpha+\beta< \alpha+\gamma\) si, et seulement si, \(\beta< \gamma\);
\item  \(\alpha=\und1+\alpha\) si, et seulement si, \(\omega\leq  \alpha\);
\item si \(\alpha\leq \gamma\), alors il y a un \(\beta\) tel que \(\gamma=\alpha+\beta\);
\item si \(\alpha< \gamma\), alors il y a un \(\beta\ne\und0\) tel que \(\gamma=\alpha+\beta\).
\end{itemize}

\subsubsection*{Somme séquentielle}
Soit \(J\in\fF\) muni d'une relation d'ordre  $\prec$ bien fondée,
possédant un \elt minimum~$0_J$ détachable. 
Soit $(\beta^j)_{j\in J}$ un \elt de $\Fam(J,\Ord)$. La \gui{somme indexée $\prec$-séquentielle} $\sum_{j\prec \ell}\beta^j$ est définie par induction sur~$\ell$ \hbox{dans $(J,\prec)$}: 
\[
  \som_{j\prec 0_J}\beta^j=0_J\quad \hbox{et}\quad 
\som_{j\prec \ell}\beta^j=
\sup \left(\big(\som_{j\prec k}\beta^j\big)+
\beta^{k}\right)_{k\prec\ell} \hbox{ si }0_J\prec\ell.
\] 
On démontre par induction sur $J$ que si l'on a deux familles $(\beta^j)_{j\in J}$ 
et $(\gamma^j)_{j\in J}$ avec $\beta^j\leq\gamma^j $ pour tout $j\in J$, alors pour tout $\ell\in J$, $\som_{j\prec \ell}\beta^j\leq \som_{j\prec \ell}\gamma^j$.  
Cette construction passe donc au quotient~\(\Ord\).

\begin{fremark} \label{fremsomindexée}  
 Cette construction permet de définir une fonction $\ord_2^\mathrm{Br}\to\ord_2$, où $\ord_2^\mathrm{Br}$ est l'ensemble des noms des ordinaux de Brouwer. Voir \cite{ftroelstra69} et \cite{fbrouwer18,fbrouwer26}. Troelstra traite uniquement les ordinaux de Brouwer dénombrables.
\end{fremark}

\subsubsection*{Multiplication}

On définit  $\alpha\cdot\beta$ par induction sur $\beta\in\ord$: 
\[
  \alpha\cdot\und0=\und0\quad \hbox{et}\quad 
\alpha\cdot\beta=\sup (\alpha\cdot\beta_j +\alpha)_{j\in\In_\beta} \;\hbox{ si }\;\beta=\s{\beta_j}{j\in\In_\beta} \in\ord\sta.
\]  

Les propriétés suivantes se démontrent par induction:
\begin{itemize}
\item  si \(\alpha\leq \alpha'\) et \(\beta\leq \beta'\), alors
\(\alpha\cdot\beta\leq \alpha'\cdot\beta'\);
\item \((\alpha\cdot\beta)\cdot\gamma=\alpha\cdot(\beta\cdot\gamma)\);
\item \(\alpha\cdot\und1=\und1\cdot\alpha=\alpha\); 
\item  
\(\alpha\cdot(\beta+\gamma)=(\alpha\cdot\beta)+(\alpha\cdot\gamma)\);
\item si \(\und1\leq \alpha\), alors \(\alpha\cdot\beta\leq \alpha\cdot\gamma\) si, et seulement si, \(\beta\leq\gamma\); 
\item
si \(\und1\leq \alpha\), alors  \(\alpha\cdot\beta< \alpha\cdot\gamma\) si, et seulement si, \(\beta< \gamma\).
%
\end{itemize}

\subsubsection*{Exponentiation}
On définit  $\alpha^\beta$ par induction sur $\beta\in\ord$: 
\[
  \alpha^{\und0}=\und1\quad \hbox{et}\quad 
\alpha^\beta=\sup ( \alpha^{\beta_j} \cdot \alpha)_{j\in\In_\beta} \;\hbox{ si }\;\beta=\s{\beta_j}{j\in\In_\beta} \in\ord\sta.
\]  

\subsubsection*{Ackermann}

On peut  continuer cette \gui{arithmétique \elr} à  la Ackermann  comme dans \citealt{ffinsler51} en définissant par induction un ordinal $\Acko(\alpha,\beta,\gamma)$ obtenu \gui{en itérant $\gamma$ fois la fonction précédente, initialisée à $\alpha$}, c'est-à-dire de manière plus précise
\[ 
\begin{array}{rcl} 
\Acko(\alpha,\beta,\und0)  & =  & \alpha+\beta  \\[.3em] 
\Acko(\alpha,\und0,\gamma)  & =  &  \alpha \qquad \hbox{ si }\;\gamma\in\ord\sta\\[.3em] 
\Acko(\alpha,\beta,\gamma)  & =  & \sup \bigl(\sup
  (\Acko(\Acko(\alpha,\beta_j,\gamma),\alpha,\gamma_k))_{j\in\In_\beta}\bigr)_{k\in\In_\gamma} \\[.3em]&& \qquad\quad \hbox{ si }\;\beta=\s{\beta_j}{j\in\In_\beta}\;\hbox{ et }\;\gamma=\s{\gamma_k}{k\in\In_\gamma}.
 \end{array}
\]

En particulier, \(\varepsilon_0=\Acko({\omega,\omega,\und4})\).




\section{Ordinaux dénombrables}

\subsection{Premiers pas}
Comme indiqué précédemment, les ordinaux de la seconde classe (les ordinaux dénom\-brables), sont définis en prenant l'ensemble d'indexeurs   
\[\fF_2=\sotq{\NN_k}{k\in\NN, k\geq0} \cup \so\NN\]
muni d'opérations convenables pour l'ensemble des sous-ensembles finis
d'un \(I\in \fF\) et pour les réunions disjointes d'éléments de~\(\fF\) indexés par un élément de~\(\fF\).
On écrit \(\ord_2\) et \(\Ord_2\) pour \(\ord_{\fF_2}\) et \(\Ord_{\fF_2}\).
Donc \(\Ord_2\) est l'ensemble des ordinaux de la seconde classe
tandis que $\ord_2$ est un ensemble de noms pour les \elts de $\Ord_2$.

\begin{flemma} \label{flemsucccroissant}
Tout ordinal dénombrable est le $\suc$ d'une suite croissante d'ordinaux dénombrables.
\end{flemma}
\begin{proof}
C'est le \cref{ffiltrant}.
\end{proof}
%

\begin{fproposition} \label{fpropOrdLPO1} 
Supposons \LPO\@. 
Alors, pour \(\alpha,\beta\in\Ord\), on a  \(\alpha\leq \beta\) ou \(\beta<\alpha\).
\end{fproposition}
\begin{proof} C'est comme la \cref{fpropOrdclass}, pour le cas dénombrable.
\end{proof}
%

%
%

\subsection{Comparaison avec les ordinaux de Martin-Löf}\label{fsubsecOML}

Nous présentons ici une variante de la théorie des ordinaux du livre \emph{Notes on Constructive Mathematics} \citep[Chapter 3]{fPML}. 
Nous disons \gui{variante} car la théorie de Martin-Löf est présentée dans le cadre des mathématiques récursives à la Markov, alors que nous nous situons dans la logique intuitionniste avec les définitions inductives généralisées,
comme dans le travail \citealt{fheyting61} (le fait que ce cadre fournit un traitement plus élégant que celui des mathématiques récursives est souligné dans le rapport de \citet{fkreisel63} sur ce travail).

\subsubsection{Le système formel de Martin-Löf}

Dans ce système, les ordinaux sont décrits de manière inductive: si l'on a une suite finie ou infinie
d'ordinaux \(\sigma = \sigma_0,\dots,\sigma_n,\dots\) (peut-être vide), alors \(\suc(\sigma)\) est un ordinal.

La sémantique classique de cette opération est la suivante: à la suite d'ordinaux \((\sigma_n)\) on associe le supremum de la suite des successeurs des~\(\sigma_n\).

En particulier, \(\und 0\) est défini comme \(\suc(\sigma)\), où \(\sigma\) est la suite vide.

Nous écrivons simplement \(\suc(\alpha)\) pour \(\suc(\sigma)\), où \(\sigma\) est la suite à un élément \(\sigma_0 = \alpha\).

En \coma, l'ensemble de ces ordinaux est un exemple d'ensemble non discret.

Comme décrit dans l'introduction, à chaque ordinal \(\alpha\) nous associons, par induction sur~\(\alpha\), un arbre \(\Tree(\alpha)\):
\(\Tree(\alpha)\) contient toujours la suite vide, et \(\Tree(\suc(\sigma))\) contient \(n\cct \ell\) si
\(\ell\) est dans \(\Tree(\sigma_n)\).

Cet ensemble \(\Tree(\alpha)\) ne contient aucune branche infinie: si \(f\) est une fonction numérique,
nous pouvons toujours trouver un \(n\) tel que \([f(0),\dots,f(n-1)]\) n'est pas dans \(\Tree(\alpha)\). Ceci est démonté directement par induction sur \(\alpha\). Autrement dit, l'arbre \(\Tree(\alpha)\) est bien fondé.

L'ensemble de ces arbres, $\ord_2$, est l'ensemble des noms d'ordinaux, aussi bien chez Martin-Löf que dans notre approche.

Le fait que l'on trouve de cette manière tous les arbres bien fondés
est le contenu du théorème de la barre. Ce théorème de Brouwer n'est valide ni dans la théorie des ensembles de Bishop, ni dans la théorie des types dépendants. Cela résulte du fait que ces deux systèmes ont une interprétation en mathématiques récursives, où le théorème de la barre est faux, comme démontré dans un exemple dû à Kleene \citep[voir][]{fkleenevesley65}. 

Par \dfn, une formule atomique est une formule de la forme \(\alpha<\beta\) ou \(\alpha\leqslant \beta\);
et un séquent est un ensemble fini de formules atomiques. 

Nous définissons maintenant par induction la phrase \gui{le séquent \(\Gamma\) est valide}. La formulation est très élégante!
\[
  \frac{\Gamma,\alpha\leqslant \sigma_n}{\Gamma,\alpha<\suc(\sigma)} ~~~~~~~~
\frac{\cdots\ \Gamma,\sigma_n<\beta\ \cdots}{\Gamma,\suc(\sigma)\leqslant \beta}
\] 
Notez qu'il y a une démonstration directe de  \(\und0\leqslant \beta\) par la deuxième règle, appliquée avec un ensemble vide de prémisses.

La signification intuitive du séquent est la disjonction classique des formules atomiques qu'il contient.

Martin-Löf définit alors une relation d'\eqvc $\alpha=_\ML \beta$ sur $\ord_2$ comme exprimant le fait que les séquents $\alpha\leq \beta$ et $\beta\leq \alpha$ sont valides. L'ensemble des ordinaux de Martin-Löf, noté $\Ord_2^\ML$, est le quotient de $\ord_2$ par cette relation d'\eqvc.

Martin-Löf  prouve le séquent \(\alpha<\beta,\beta\leqslant \alpha\) par induction sur~\(\beta\) et~\(\alpha\).
Il démontre aussi par induction sur \(\alpha\) que la règle suivante est admissible:
\[
  \frac{\Gamma,\alpha<\alpha}{\Gamma}\text,
\] 
ce qui implique en particulier  que \(\alpha<\alpha\) n'est pas démontrable.

Donnons un exemple de telles \dems par induction.

\begin{flemma}
Pour tout \(\alpha\) les séquents \(\alpha\leqslant \alpha\) et  \(\alpha<\suc(\alpha)\) sont valides.
\end{flemma}

\begin{proof}
On démontre \(\alpha\leqslant \alpha\) par induction sur \(\alpha\). Si \(\alpha = \suc(\sigma)\), on doit montrer
  \(\sigma_n<\suc(\sigma)\) pour tout \(n\), ce qui résulte de \(\sigma_n\leqslant \sigma_n\), qui se démontre par induction.

Par suite on~a \(\alpha<\suc(\alpha)\) en utilisant la première règle.
\end{proof}

Martin-Löf peut \egmt démontrer l'analogue du \cref{fthOrd2} pour $\Ord_2^\ML$. Mais les deux énoncés pour $\Ord_2^\ML$ et pour notre $\Ord_2$ sont indépendants l'un de l'autre.

\subsubsection{Comparaison avec notre système}
\label{fsec:comparison}

Expliquons maintenant pourquoi cette \dfn ne coïncide pas avec la nôtre. Pour cela nous donnons un exemple de la forme \(\alpha<\beta\) qui est démontrable dans ce calcul des séquents, mais qui implique  \LPO\ dans notre système.

On reprend l'\cref{fexa123}: on définit \(\alpha = \suc( \sigma)\),
où \(\sigma_n=\und{u_n}\) avec \((u_n)\) une suite croissante (au sens large)
de~\(0\) et de~\(1\), et \(\beta = \suc( \tau)\), où \(\tau_n=s(\sigma_n)=\und{u_n+1}\).

\begin{flemma}
Le séquent \(\alpha<\beta\) est valide.
\end{flemma}

\begin{proof}
D'après la première règle, il suffit de valider le séquent
\(\alpha<\suc( \tau),\alpha\leqslant\tau_0\). Pour cela on doit démontrer pour tout \(n\) le séquent
\(\sigma_n<\tau_0, \alpha<\suc(\tau)\).
Fixons \(n\).

\noindent Si l'on a \( \sigma_n<\tau_0=\suc( \sigma_0)\), c'est OK. Notez
que nous pouvons tester si \ \(\sigma_n<\tau_0\) est valide ou pas car  
\(\sigma_n\) et \(\tau_0\) sont tous deux de la forme~\(\und0\) ou~\(\und1\) ou~\(\und2\).

\noindent Sinon, nous avons explicitement un \(n\) tel que
\( \sigma_n\geqslant\suc(\sigma_0)\) et on~a alors
\(\sigma_m\leqslant \sigma_n\) et donc \(\sigma_m< \tau_n\) pour tout~\(m\).
On démontre \( \sigma_n< \tau_0,\alpha<\suc( \tau)\) comme conséquence de
\(\sigma_n< \tau_0, \alpha\leqslant\tau_n\)
qui est valide parce que
\(\sigma_n<\tau_0,  \sigma_m< \tau_n\)
est valide pour tout~\(m\).
\end{proof}

  Notez que nous démontrons \(\alpha<\suc( \tau)\) en démontrant \(\alpha<\suc( \tau),\alpha\leqslant  \tau_0\), et
  l'on doit ``garder'' \(\alpha<\suc( \tau)\): peut-être \(\alpha\leqslant  \tau_0\) n'est pas valide
  (il se peut que la suite~\((\sigma_n)\) prenne la valeur~\(\und1\) et que \(\tau_0 = \und1\)).

Dans l'\cref{fexa123}, on a vu que \(\alpha<\beta\) implique \LPO\ dans notre système.
Par conséquent, dans l'ensemble \(\Ord_2^\mathrm{ML}\) des ordinaux de  Martin-Löf, l'égalité est plus grossière que dans l'ensemble~\(\Ord_2\)  (les deux sont des quotients de  \(\ord_2\)).

\addcontentsline{toc}{section}{Références}

\bibliographystyle{plainnat-fr}
\markboth{Références}{Références}
\small


\end{document}